\documentclass[10pt]{article}

\usepackage{amsfonts, epsfig, amsmath, amssymb, color,amsthm, mathabx}
\usepackage{mathrsfs}
\usepackage[english]{babel}

\usepackage{mathrsfs}
\usepackage{pgfplots}
\pgfplotsset{width=7cm,compat=newest}
\usepackage{pgfplotstable}
\usetikzlibrary{calc}

\usepackage[T1]{fontenc}
\usepackage{lmodern}

\usepackage[displaymath, mathlines, pagewise]{lineno}

\textwidth 16.5cm \textheight 23cm
\oddsidemargin 0mm
\evensidemargin -4.5mm
\topmargin -10mm

\parindent 0.5cm




\newcommand{\E}{\mathbf{E}}
\renewcommand{\P}{\mathbf{P}}

\usepackage{color}

\theoremstyle{plain}
\newtheorem{thm}{Theorem}[section]
\newtheorem{lem}[thm]{Lemma}
\newtheorem{prp}[thm]{Proposition}

\theoremstyle{definition}
\newtheorem{rem}[thm]{Remark}
\newtheorem{exa}[thm]{Example}


\newcommand{\e}{\varepsilon}

\newcommand{\be}{\begin{equation}}
\newcommand{\ee}{\end{equation}}
\newcommand{\ben}{\begin{equation*}}
\newcommand{\een}{\end{equation*}}

\newcommand{\ba}{\begin{equation}\begin{aligned}}
\newcommand{\ea}{\end{aligned}\end{equation}}

\DeclareMathOperator{\sgn}{sgn}

\newcommand{\ex}{\mathrm{e}}
\newcommand{\di}{\mathrm{d}}

\newcommand{\cL}{\mathcal{L}}

\newcommand{\rF}{\mathscr{F}}

\newcommand{\bI}{\mathbb{I}}

\newcommand{\bR}{\mathbb{R}}

%
%
%

\allowdisplaybreaks[4]

\let\oldmarginpar\marginpar
\renewcommand{\marginpar}[1]{\oldmarginpar{\scriptsize\texttt{\color{red}{#1}}}}

\numberwithin{equation}{section}

\begin{document}
\title{Homogenization of a multivariate diffusion\\ with semipermeable   interfaces}

\date{\today}

\author{Olga Aryasova\thanks{Institute of Geophysics, National Academy of Sciences of Ukraine, Palladin ave.\ 32, 03680 Kyiv-142,
Ukraine \emph{and} Institute of Mathematics, Friedrich Schiller University Jena, Ernst--Abbe--Platz 2, 
07743 Jena, Germany \emph{and} Igor Sikorsky Kyiv Polytechnic Institute, Beresteiskyi ave.\ 37, 03056, Kyiv, Ukraine; \texttt{oaryasova@gmail.com}}, \ 
Ilya Pavlyukevich$^{*,}$\thanks{Institute of Mathematics, Friedrich Schiller University Jena, Ernst--Abbe--Platz 2, 
07743 Jena, Germany; \texttt{ilya.pavlyukevich@uni-jena.de}}\ ,
and 
Andrey Pilipenko\thanks{Institute of Mathematics, National Academy of Sciences of Ukraine, Tereshchenkivska Str.\ 3, 01601, Kyiv, Ukraine \emph{and} 
Igor Sikorsky Kyiv Polytechnic Institute, Beresteiskyi ave.\ 37, 03056, Kyiv, Ukraine; \texttt{pilipenko.ay@gmail.com}}}

\footnotetext{\hspace{-.25cm}$^*$Corresponding author}

\maketitle

\begin{abstract}
We study the homogenization problem for a system of stochastic differential equation with local time terms  
that models a multivariate diffusion in presence of semipermeable hyperplane interfaces with oblique penetration. 
We show that this system has a unique weak solution and determine its weak limit as the distances between the interfaces converge to zero.
In the limit, the singular local times terms vanish and give rise to an additional regular \emph{interface-induced} drift.
\end{abstract}

\noindent
\textbf{Keywords:} local times, homogenization, existence and uniqueness of weak solutions,
semipermeable  interfaces, oblique reflection, weak convergence, martingale problem.

\smallskip

\noindent
\textbf{2010 Mathematics Subject Classification:} 60H10, 60F05, 60H17, 60J55 


\section{Setting and the main results}

The mathematical homogenization problem usually deals with the study of effective parameters of a system with rapidly varying spatial characteristics.
Its original motivation comes from the analysis of composite materials with periodic structure.
If the period of the structure is very small in comparison to the object's macroscopic size one can consider the material as a new homogeneous substance. 
A typical example here is the analysis of material's thermal conductivity. In mathematical terms, one  solves 
a boundary value problem $-\nabla (A(x/\e)\nabla u^\e(x))=f$ in some domain $G\subseteq \bR^n$ 
with certain boundary conditions on $\partial G$. The matrix (thermal conductivity tensor) $A(\cdot)$ is assumed to be periodic in $\bR^n$, and the 
parameter $\e>0$ is small. The solution $u^\e$ gives the temperature distribution in the domain $G$. 
The goal of homogenization consists in determining the \emph{effective}
thermal conductivity $A$ such that $u^\e\to u$ as $\e\to 0$ where $u$ is the solution of the homogenized equation $A \Delta u=f$. 
There is vast literature devoted to this subject see, e.g., Bensoussan et al.\ \cite{bensoussan1978asymptotic}, Jikov et al.\ \cite{JikKozOle94}, and Berdichevsky et al.\ \cite{BerJikPap99}, Chechkin et al.\ \cite{chechkin2007homogenization} that comprises both analytic and stochastic methods. 

In the present paper, we consider a different type of a stochastic homogenization problem: 
homogenization of a diffusion in the presence of narrowly 
located semipermeable hyperplane interfaces. 
In simple words, our model may remind of a dynamics of heat conduction in a \emph{foiled} composite material consisting of a media interlaced with 
very thin plates of different permeability. In material science such models are referred to as reinforced materials like
a glass wool reinforced by aluminium foil, as it was considered 
in Y\"uksel et al.\  \cite{yuksel2012effective}.

Such systems also appear in chemistry, biology and physics, see, e.g., Tanner \cite{Tanner78},
Dudko et al.\
\cite{dudko2004diffusion},
Moutal and Grebenkov
\cite{moutal2019diffusion},
Grebenkov
\cite{grebenkov2010pulsed},
{\'S}l{\k{e}}zak and Burov
\cite{slkezak2021diffusion}.

To formulate the model rigorously, we introduce a small parameter $\e>0$ and
assume that the state space $\bR\times\bR^n$, $n\geq 0$, is sliced into layers by countably many
hyperplane interfaces (membranes)
\ba
&\{a_k^\e\}\times \bR^n,\quad k\in\mathbb Z,
\ea
such that the family $\{a^\e_k\}\subseteq \bR$ has no accumulation points.  

We look for a $n+1$ dimensional continuous strong Markov process
$(X^\e,Y^\e)=(X^\e,Y^{\e,1},\dots,Y^{\e,n})$, $X^\e\in\bR$, $Y^\e\in \bR^n$ that behaves as follows.

Between the membranes $\{\{a_k^\e\}\times \bR^n\}_{k\in\mathbb Z}$
the process $(X^\e,Y^\e)$ is a usual diffusion with the drift vector $(b^i)_{0\leq i\leq n}$ and 
the diffusion matrix $(\sigma_l^i)_{0\leq i\leq n, 1\leq l\leq m}$, $m\geq 1$.
Upon hitting a membrane $\{a_k^\e\}\times \bR^n$ at a point
$(a_k^\e,y)$ at time $\tau$,
the process $(X^\e,Y^\e)$ starts anew and `leaves' the membrane in the direction $\pm(1,\theta(a_k^\e,y))$
with
the so-called penetration probabilities
$\frac12(1 \pm \e \beta(a_k^\e,y))$.

We look for such a process $(X^\e,Y^\e)$ as a solution of the
stochastic differential equation (SDE)
\ba
\label{e:XY}
X^\e_t&= x+\sum_{l=1}^m \int_0^t \sigma_{l}^0(X^\e_s,Y^\e_s)\,\di W^l_s + \int_0^t b^0(X^\e_s,Y_s^\e)\,\di s
+\sum_{k=-\infty}^\infty \e \int_0^t \beta(a_k^\e,Y_s^\e)\,\di L^{a_k^\e}_s (X^\e),\\
Y^{\e,i}_t&= y^i+\sum_{l=1}^m \int_0^t \sigma^{i}_l(X_s^\e,Y_s^\e)\,\di W^l_s +\int_0^t b^i(X_s^\e,Y_s^\e)\,\di s
+ \sum_{k=-\infty}^\infty \e\int_0^t  \beta(a_k^\e,Y_s^\e) \theta^i(a_k^\e,Y_s^\e)\,\di L^{a_k^\e}_s (X^\e),\\
&\quad i=1,\dots,n,
\ea
where $W=(W^1,\dots,W^m)$ is a standard $m$-dimensional Brownian motion and
$L^a(X^\e)$ is the symmetric local time of $X^\e$ at $a\in\bR$ (see the end of this Section for precise definitions).

The case $n=0$ corresponds to a one-dimensional diffusion $X^\e$ and point interfaces $\{a_k^\e\}$ located on the real line.

The goal of this paper is to establish the existence and uniqueness of a weak solution of the system \eqref{e:XY} and to study
its \emph{homogenization} limit as $\e\to 0$.

We make the following assumptions about the coefficients in the system \eqref{e:XY} and the location of the membranes. 
By $C^k_b$, $k\geq 1$, we denote the space of $k$ times continuously differentiable bounded functions with bounded derivatives.

\smallskip

\noindent
\textbf{Assumptions A:}

\smallskip
\noindent
\textbf{A}$_\text{coeff}$:
\ba
\beta, b^i, \theta^i, \sigma^i_l\in C^2_b(\bR\times\bR^n,\bR),\quad  i=0,\dots,n,\ l=1,\dots,m.
\ea

\smallskip
\noindent
\textbf{A}$_a$:
The points $\{a_k^\e\}$ satisfy
\ba\label{e:d}
a^\e_k=\int_0^{\e k} d(x)\,\di x,\quad k\in\mathbb Z,
\ea
where $d\in C^1_b(\bR,\bR)$ such that $\inf_{x\in\bR} d(x)>0$.

\smallskip
\noindent 
\textbf{A}$_{\Sigma}$:
The matrix $\Sigma=(\Sigma^{ij})_{0\leq i,j\leq n}$ with the entries
\ba
\Sigma^{ij}(x,y)=\sum_{l=1}^m \sigma^i_l(x,y)\sigma^j_l(x,y)
\ea
is uniformly positive definite.
 
The main result of this paper is formulated in the following theorems.

\begin{thm}
\label{t:exist}
Let Assumptions \emph{\textbf{A}} hold true. Then there is $\e_0\in(0,1]$ small enough such that for all $\e\in(0,\e_0]$
the system \eqref{e:XY} has a unique weak solution $(X^\e,Y^\e)$, which is a strong Markov process.
\end{thm}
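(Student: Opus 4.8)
The plan is to eliminate the singular local-time terms in \eqref{e:XY} by a bi-Lipschitz change of variables, reducing the system to a standard Itô system with bounded, uniformly elliptic, but \emph{discontinuous} coefficients and no local times, and then to obtain uniqueness and the strong Markov property from well-posedness of the associated martingale problem, established by localization. First fix $\e_0\in(0,1]$ with $\e_0\|\beta\|_\infty<1$. By Assumption \textbf{A}$_a$ the gaps $a_{k+1}^\e-a_k^\e$ lie between $\e\inf_\bR d>0$ and $\e\|d\|_\infty$, so $\{a_k^\e\}_{k\in\mathbb Z}$ is locally finite; since $X^\e$ has continuous paths, on every interval $[0,T]$ only finitely many of the terms $\e\beta(a_k^\e,\cdot)\,\di L_\cdot^{a_k^\e}(X^\e)$ are non-zero, which makes the notion of a weak solution of \eqref{e:XY} meaningful. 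Next one constructs a bi-Lipschitz homeomorphism $\Phi^\e=(\phi^\e,\psi^{\e,1},\dots,\psi^{\e,n})$ of $\bR\times\bR^n$ whose components are of class $C^2$ in $y$ and $C^1$ in $x$ off the membranes, with bounded derivatives, and have prescribed ``creases'' along each $\{a_k^\e\}\times\bR^n$; for the first component one imposes
\ben
\frac{\partial_x\phi^\e(a_k^\e+,y)}{\partial_x\phi^\e(a_k^\e-,y)}=\frac{1-\e\beta(a_k^\e,y)}{1+\e\beta(a_k^\e,y)},\qquad k\in\mathbb Z,\ y\in\bR^n,
\een
which is exactly the ratio for which the multidimensional It\^o--Tanaka formula makes the contributions of $\di L_\cdot^{a_k^\e}(X^\e)$ to $\di\phi^\e(X^\e,Y^\e)$ cancel, while the transverse components $\psi^{\e,i}$ receive analogous creases that also cancel the oblique terms $\e\beta\theta^i\,\di L_\cdot^{a_k^\e}(X^\e)$ in the $Y^{\e,i}$-equations. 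Since $\e_0\|\beta\|_\infty<1$ the crease ratios are bounded away from $0$ and $\infty$, and, the membranes being $\sim\e$ apart, for each fixed $\e\le\e_0$ such a $\Phi^\e$ is obtained by smoothly interpolating the prescribed one-sided values across each slab; its Jacobian $D\Phi^\e$ is bounded with bounded inverse.

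Applying the multidimensional It\^o--Tanaka formula to $\Phi^\e$ then shows that $(X^\e,Y^\e)$ is a weak solution of \eqref{e:XY} if and only if $\xi:=\Phi^\e(X^\e,Y^\e)$ is a weak solution of an It\^o system $\di\xi_t=\bar\sigma(\xi_t)\,\di W_t+\bar b(\xi_t)\,\di t$ \emph{without} local-time terms, where $\bar b$ is bounded and measurable and $\bar\Sigma:=\bar\sigma\bar\sigma^\top=\bigl(D\Phi^\e\,\Sigma\,(D\Phi^\e)^\top\bigr)\circ\Phi^{\e,-1}$ is bounded, uniformly positive definite (by \textbf{A}$_\Sigma$ and the boundedness of $(D\Phi^\e)^{-1}$) and continuous off the countably many, locally finitely many, hypersurfaces $\Gamma_k^\e:=\Phi^\e(\{a_k^\e\}\times\bR^n)$, with one-sided limits across each. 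A weak solution of the $\xi$-system exists by the existence theory for the martingale problem of uniformly elliptic operators with bounded measurable coefficients (obtained by mollifying $\bar\sigma$, $\bar b$ and passing to a weak limit with the help of Krylov's estimates); transforming back by the homeomorphism $\Phi^{\e,-1}$, whose $x$-derivative carries the reciprocal crease, and using the occupation-times formula to identify the symmetric local times $L^{a_k^\e}(X^\e)$, returns a weak solution of \eqref{e:XY}. This proves existence.

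For uniqueness it suffices --- since $\Phi^\e$ is a fixed homeomorphism and local times are path functionals --- to prove well-posedness of the martingale problem for $\bar\cL:=\frac12\sum_{i,j}\bar\Sigma^{ij}\partial_{ij}+\sum_i\bar b^i\partial_i$, and this is the main obstacle, because $\bar\Sigma$ itself, not merely the drift, is discontinuous across the $\Gamma_k^\e$. The key tool is the Stroock--Varadhan localization principle: well-posedness of the martingale problem for $\bar\cL$ follows from well-posedness for operators that agree with $\bar\cL$ near an arbitrary point. Localizing at a point lying off all $\Gamma_k^\e$ yields, by local finiteness, an operator with bounded, uniformly elliptic, continuous coefficients, whose martingale problem is well-posed by the classical Stroock--Varadhan theory; localizing at a point of exactly one $\Gamma_k^\e$ yields, after a smooth diffeomorphism straightening that hypersurface to $\{z_1=0\}$, the model operator with bounded, uniformly elliptic coefficients that are continuous off the single hyperplane $\{z_1=0\}$ with one-sided limits. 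Well-posedness of the martingale problem for this model operator is the technical core; it can be derived from the solvability of the associated stationary transmission problem $(\lambda-\bar\cL)u=f$ in suitable Sobolev/H\"older classes (the solution being $C^{1,\alpha}$ globally and $C^2$ on each side) together with the It\^o--Krylov formula and the Stroock--Varadhan uniqueness criterion. Finally, well-posedness of the martingale problem makes $\xi$ a strong Markov family, and, $\Phi^\e$ being a homeomorphism, so is $(X^\e,Y^\e)=\Phi^{\e,-1}(\xi)$. Besides this model uniqueness statement, the delicate points are the bookkeeping in the multidimensional It\^o--Tanaka computations with the $y$-dependent transformation $\Phi^\e$, in particular the identification of the local times of $X^\e$ after transforming back, and checking that the crease conditions for the components $\psi^{\e,i}$ can indeed be met.
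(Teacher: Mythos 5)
Your overall strategy is the same as the paper's: kill the local-time terms by a change of variables whose normal derivative has a prescribed jump ratio $\frac{1-\e\beta}{1+\e\beta}$ across each membrane, reduce to an It\^o system with bounded, uniformly elliptic coefficients that are discontinuous across hyperplanes, and get uniqueness and the strong Markov property from well-posedness of the resulting martingale problem. The execution differs: the paper performs the transformation \emph{locally}, one membrane at a time, in a thin strip $[-A\e,A\e]\times\bR^n$ (where injectivity and bi-Lipschitz bounds follow from Hadamard's theorem), and glues the strip solutions together; you propose a single \emph{global} creased homeomorphism of $\bR\times\bR^n$ plus Stroock--Varadhan localization. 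Note also that in the paper the oblique terms in the $Y$-equations are cancelled not by creases in the transverse components but by the map $G(x,y)=y-x\theta(y)$: the factor $x$ vanishes on the membrane, so no jump condition on $\partial_x G$ is needed. Your transverse ``crease conditions,'' which you yourself flag as unchecked, are therefore an unnecessary complication, and your claim that the global map has a bounded Jacobian with bounded inverse needs real work: the $y$-dependence of the crease ratios accumulates over the infinitely many membranes crossed as $|x|$ grows (order $\e$ per slab, order $1/\e$ slabs per unit length), so bi-Lipschitz bounds hold only if the interpolation inside each slab is designed to renormalize this; the paper's thin-strip-plus-gluing scheme avoids the issue entirely.

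The more serious gap is the technical core you explicitly defer: well-posedness of the martingale problem for the model operator whose diffusion matrix is discontinuous across a single hyperplane. You assert it ``can be derived'' from solvability of a stationary transmission problem together with the It\^o--Krylov formula and the Stroock--Varadhan uniqueness criterion, but you do not carry this out, and it is precisely the step on which everything else (uniqueness, hence the strong Markov property) rests. The paper resolves it by citation: existence from Krylov \cite{krylov1969ito}, and uniqueness from Nakao \cite{nakao1972pathwise} with Yamada--Watanabe \cite{yamada1971uniqueness} for $n=0$, from Krylov \cite{krylov1969ito} for $n=1$, and from Theorem 1.1 of Gao \cite{gao1993martingale} for $n\geq 2$. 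Either invoke such a result or actually prove the transmission-problem estimates; as written, this part of your argument is a statement of intent rather than a proof. Finally, the smallness condition on $\e$ should not only guarantee $\e\|\beta\|_\infty<1$ but also that the creased map is injective (in the paper, that the strip transformation is a homeomorphism), and non-explosion of the glued (or transformed-back) solution still has to be checked, which the paper does later via the compact containment lemma.
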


\begin{rem}
The condition that the parameter $\e>0$ is small is essential. First, since the values $\frac12(1\pm\e\beta(x,y))$ have the meaning of penetration probabilities through the membranes, we have to demand that $\e\|\beta\|_\infty\leq 1$; otherwise the solution $(X^\e,Y^\e)$ does not exist even in the 
simplest one-dimensional case of a skew Brownian motion, see Harrison and Shepp \cite{HShepp-81}.   
Another condition on the smallness of $\e$ will be needed for the proof of 
uniqueness of the solution in a neighbourhood of each membrane $a^\e_k$, see Section
\ref{s:eu}. It is well known that uniqueness is necessary for the strong Markov property of the solution.
\end{rem}

\begin{thm}
\label{t:hom}
Let Assumptions \emph{\textbf{A}} hold true and let $(X^\e,Y^\e)$ be unique weak solutions of \eqref{e:XY}. In the limit $\e\to 0$ the weak convergence
\ba
(X^\e,Y^\e) \Rightarrow (X,Y)
\ea
in $C(\bR_+,\bR^{n+1})$ holds true, where $(X,Y)$ is a (unique) weak solution of the SDE
\ba
\label{e:XYlim}
X_t&= x+\sum_{l=1}^m \int_0^t \sigma_{l}^0(X_s,Y_s)\,\di W^l_s + \int_0^t \Big(b^0(X_s,Y_s)+\beta(X_s,Y_s) \frac{\Sigma^{00}(X_s,Y_s)}{d(X_s)}\Big)\,\di s ,\\
Y^{i}_t&= y^i+\sum_{l=1}^m \int_0^t \sigma^{i}_l(X_s,Y_s)\,\di W^l_s
+\int_0^t \Big(b^i(X_s,Y_s)   +\beta(X_s,Y_s) \theta^i(X_s,Y_s) \frac{ \Sigma^{00}(X_s,Y_s)}{d(X_s)}\Big)\,\di s.
&\quad i=1,\dots,n.
\ea
\end{thm}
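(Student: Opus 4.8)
The plan is to argue by the standard compactness-plus-identification scheme for martingale problems. First I would establish tightness of the family $\{(X^\e,Y^\e)\}_{\e\in(0,\e_0]}$ in $C(\bR_+,\bR^{n+1})$. The diffusion-coefficient and ordinary-drift parts of \eqref{e:XY} are uniformly bounded by Assumption \textbf{A}$_{\mathrm{coeff}}$, so they cause no trouble; the delicate point is to control the sum of local-time terms $\sum_k \e\int_0^t\beta(a_k^\e,Y_s^\e)\,\di L^{a_k^\e}_s(X^\e)$ and its $Y$-analogue. For this I would derive an a priori bound on $\E\big[\big(\sum_k \e L^{a_k^\e}_t(X^\e)\big)^p\big]$ uniformly in $\e$. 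The natural tool is the occupation times formula together with the fact that the points $a_k^\e$ are spaced at distance $\asymp \e$ (from Assumption \textbf{A}$_a$, $a_{k+1}^\e-a_k^\e=\int_{\e k}^{\e(k+1)}d(x)\,\di x\asymp\e$): writing $\sum_k \e L^{a_k^\e}_t(X^\e)\approx \sum_k \e\,\ell^\e_k$ where $\ell^\e_k$ is a local time at level $a_k^\e$, a Riemann-sum comparison gives $\sum_k \e L^{a_k^\e}_t(X^\e)\approx \int_\bR \frac{L^x_t(X^\e)}{d(x)}\,\di x=\int_0^t \frac{\Sigma^{00}(X_s^\e,Y_s^\e)}{d(X_s^\e)}\,\di s$ up to errors that vanish with $\e$. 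This already both yields the moment bounds needed for tightness (Kolmogorov/Aldous criterion) and identifies the limiting drift.

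Once tightness is in hand, along any subsequence $(X^{\e},Y^{\e})\Rightarrow(X,Y)$ I would identify the limit as a solution of the martingale problem associated with the generator of \eqref{e:XYlim}, namely
\ba
\label{e:Lgen}
\mathcal{L}f=\frac12\sum_{i,j=0}^n\Sigma^{ij}\partial_{ij}f+\sum_{i=0}^n\Big(b^i+\beta\,\theta^i\,\frac{\Sigma^{00}}{d}\Big)\partial_i f,
\ea
with the convention $\theta^0\equiv 1$. For a fixed $f\in C^2_b(\bR^{n+1})$, It\^o's formula applied to the prelimit process gives that
\ba
\label{e:mart}
f(X^\e_t,Y^\e_t)-f(X^\e_0,Y^\e_0)-\int_0^t \mathcal{L}_0 f(X^\e_s,Y^\e_s)\,\di s-\sum_{k}\e\int_0^t \big(\partial_x f+\sum_{i=1}^n\theta^i\partial_{y^i}f\big)\beta\,\di L^{a_k^\e}_s(X^\e)
\ea
is a martingale, where $\mathcal{L}_0$ is the generator without the interface drift. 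Using the approximation described above, the local-time sum converges in probability to $\int_0^t\big(\partial_x f+\sum_{i=1}^n\theta^i\partial_{y^i}f\big)\beta\,\frac{\Sigma^{00}}{d}\,\di s$ evaluated along $(X^\e,Y^\e)$; combined with the continuous-mapping theorem and uniform integrability (from the moment bounds) this passes to the limit, showing $f(X_t,Y_t)-f(X_0,Y_0)-\int_0^t\mathcal{L}f(X_s,Y_s)\,\di s$ is a martingale. The limiting SDE \eqref{e:XYlim} has locally Lipschitz (indeed $C^2_b$) coefficients, hence its martingale problem is well posed; by uniqueness of the limit the whole family converges, not just subsequences.

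The main obstacle is the rigorous justification of the local-time averaging step, i.e.\ showing
\ba
\label{e:avg}
\sup_{s\le t}\Big|\sum_{k=-\infty}^\infty \e\int_0^s g(a_k^\e,Y_u^\e)\,\di L^{a_k^\e}_u(X^\e)-\int_0^s \frac{g(X_u^\e,Y_u^\e)\,\Sigma^{00}(X_u^\e,Y_u^\e)}{d(X_u^\e)}\,\di u\Big|\xrightarrow{\P}0
\ea
for $g\in C^1_b$, uniformly in $\e$. One handles this by first replacing $g(a_k^\e,\cdot)$ by $g(X^\e_\cdot,\cdot)$ on the support of $\di L^{a_k^\e}(X^\e)$ at negligible cost (the local time is supported on $\{X^\e=a_k^\e\}$ and $g$ is Lipschitz), then using the It\^o--Tanaka formula to write $L^{a_k^\e}(X^\e)$ in terms of $|X^\e-a_k^\e|$ and a stochastic integral, and summing over $k$; the telescoping structure of $\sum_k|x-a_k^\e|$-type expressions, together with $\langle X^\e\rangle_s=\int_0^s\Sigma^{00}(X_u^\e,Y_u^\e)\,\di u$ and the spacing $a_{k+1}^\e-a_k^\e\approx \e\,d(a_k^\e)$, produces the claimed Riemann sum for $\int \Sigma^{00}/d\,\di u$. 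A careful bookkeeping of the error terms — which requires the $C^2_b$ regularity in Assumption \textbf{A}$_{\mathrm{coeff}}$ and the uniform ellipticity in Assumption \textbf{A}$_\Sigma$ to keep $X^\e$ from lingering pathologically near a single interface — is where the real work lies; the rest is soft.
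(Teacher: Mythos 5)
Your proposal is a genuinely different route from ours. We never average the local-time terms directly: near each membrane we transform them away via the map $(F^\e,G)$ of Section~\ref{s:eu}, reduce to an SDE with discontinuous coefficients and no local time, and then read off the limit generator from the embedded Markov chain of membrane-hitting values, using the per-strip exit asymptotics of Lemma~\ref{l:asympt} (mean exit position $\approx(\beta d+b^0d^2/\Sigma^{00})\e^2$, mean exit time $\approx d^2\e^2/\Sigma^{00}$), the visit-counting Lemma~\ref{l:nu}, and the pseudo-generator \eqref{eq:pseudo_generator}; tightness is also run through the embedded chain. You instead keep the singular terms, apply It\^o's formula to $f(X^\e,Y^\e)$ (legitimate, since $f\in C^2_b$ and $(X^\e,Y^\e)$ is a continuous semimartingale), and replace the local-time Riemann sum $\sum_k\e\int_0^t\beta\,\di L^{a_k^\e}$ by the occupation-density integral $\int_0^t\beta\,\Sigma^{00}/d\,\di s$; with the symmetric normalization and spacing $a^\e_{k+1}-a^\e_k\approx\e d$ the constants do come out as in \eqref{e:XYlim}, and well-posedness of the limit martingale problem follows from the Lipschitz coefficients exactly as in our argument. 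What your route buys is that it works on the original equation, with no change of coordinates, no strip-exit computations and no comparison with a frozen-coefficient diffusion; what ours buys is that we never need any regularity of the local-time field, only moments of exit times and exit positions.

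The gap is that essentially all of the difficulty is concentrated in your averaging claim, which is only sketched, and the sketch omits the points that make it delicate. (i) Because the weight depends on $Y_u$, you need a weighted occupation-times formula, i.e.\ a comparison of $\sum_k\e\int_0^t g(a^\e_k,Y^\e_u)\,\di L^{a^\e_k}_u$ with $\int_\bR d(x)^{-1}\bigl(\int_0^t g(x,Y^\e_u)\,\di_u L^x_u\bigr)\di x$, not just of the unweighted sums. (ii) The Riemann-sum error requires a modulus of continuity in $x$ of this local-time field, \emph{uniform in $\e$}; note that $x\mapsto L^x_t(X^\e)$ is discontinuous precisely at the sampling points $a^\e_k$, since the finite-variation part of $X^\e$ charges $\{X^\e=a^\e_k\}$ — the symmetric normalization makes these jumps of relative size $\e$, but this has to be stated and estimated, and the Tanaka/BDG bound for $L^a_t-L^b_t$ reintroduces the singular drift itself. (iii) Consequently your tightness argument is circular as written: the moment bound on $\sum_k\e L^{a^\e_k}_t$ is claimed ``via the same averaging'', whose error control presupposes bounds on the local-time field of the very solution; one must first derive an a priori bound on $\sum_k\e L^{a^\e_k}_t$ (a Tanaka–Gronwall argument using $\e\|\beta\|_\infty$ small will do) together with a localization to a compact range of $X^\e$, since the sum over $k\in\mathbb Z$ is infinite. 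Finally, the role you assign to \textbf{A}$_\Sigma$ (``keeping $X^\e$ from lingering near an interface'') is not where ellipticity enters: it is needed for the nondegeneracy of $\langle X^\e\rangle$ in the occupation formula, and, in our proof, for the exit-time estimates and the uniqueness of the prelimit equation. None of these obstacles looks fatal, so I regard your scheme as viable, but the decisive lemma is left unproven and carrying it out is work comparable to our Sections~3--4.
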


As we see, the limiting process is a diffusion with the same diffusion matrix $\{\sigma^i_l\}$ as the processes $(X^\e,Y^\e)$. 
The semipermeable   membranes, i.e., the local time terms of \eqref{e:XY}, turn into the regular
additional drift which can alter diffusion's behaviour significantly as it is demonstrated in the following example.

\begin{exa}
Let $n=1$. We consider a two-dimensional stochastic differential equation
\ba
X^0_t&= x+W^1_t - \int_0^t Y_s^0\,\di s,\\
Y^0_t&= y+ W^2_t +\int_0^t X_s^0 \,\di s,
\ea
that describes, e.g., the velocity of a charged particle in a constant magnetic field subject to external stochastic electric field $(W^1,W^2)$, see
Chechkin et al.\
\cite{ChechkinGS02}. The random trajectory of a particle tends to make windings around 0 in the positive direction, see Fig.~\ref{f:2} (left).

Let us add to the model spatial membranes (lines) located equidistantly at $x$-points $a_{k}^\e=k\e$, $k\in\mathbb Z$, so that $d(x)\equiv 1$.
Assume that in some large ball around the origin the permeability characteristics $\beta$ and $\theta$ of the membranes are 
given by
\ba
\beta(x,y)&=\frac{2y^3}{\gamma+y^2},\\
\theta(x,y)&=-\frac{ x^3 y}{(\gamma^2+x^2)(\gamma+y^2)},\quad \gamma=10^{-2}.
\ea
Outside this ball we define them in such a way that they satisfy the assumptions \textbf{A}.
Then in the limit $\e\to 0$, the diffusion with local times $(X^\e,Y^\e)$ converges to a diffusion $(X,Y)$ which solves the SDE with 
additional \emph{interface-induced} drift:
\ba
X_t&= x+ W_t^1 - \int_0^t Y_s\,\di s
+ \int_0^t \frac{2Y^3_s  }{\gamma+Y^2 }\, \di s ,\\
Y_t&= y+ W^2_t +\int_0^t X_s \,\di s
- \int_0^t   \frac{2X_s^3Y_s^4 }{(\gamma^2+X^2_s)(\gamma+Y^2_s)^2 }    \,\di s.
\ea
In a large ball around the origin, the new drift approximately 
equals $(Y,-X)$, i.e., the rotation direction of the homogenized particle is now negative, see Fig.~\ref{f:2} (right). 
\begin{figure}
\begin{center}
 \includegraphics[width=.8\textwidth]{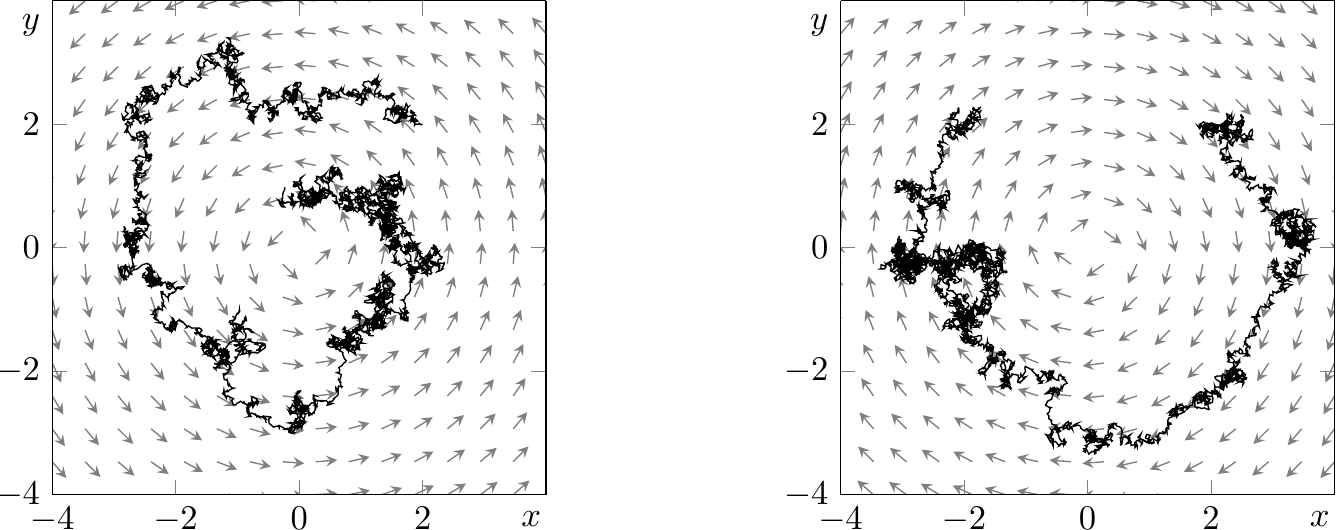}
\end{center}
\caption{Sample paths of the ``underlying'' diffusion $(X^0,Y^0)$ without membranes (left) and of the limiting homogenized diffusion $(X,Y)$ (right). The membranes with penetration probabilities $\frac12+\frac{\e}{2}\beta(x,y)$ and penetration directions $(1,\theta(x,y))$ generate an additional drift that reverses the diffusion's rotation direction. Both samples start at the point $(2,2)$.\label{f:2}}
\end{figure}
\end{exa}
The novelty of this paper, besides the stochastic homogenization Theorem \ref{t:hom}, consists in the proof of the existence and uniqueness
Theorem \ref{t:exist} in a \emph{multivariate} setting.

In dimension one, the first result on a existence and uniqueness for an SDE with local time was obtained by \cite{HShepp-81}. They proved that a skew Brownian motion is a unique strong solution to an SDE $X_t=x+W_t+(2\beta-1)L^0_t(X)$ for $|\beta|\leq 1$. In the proof, by using the speed measure the initial equation was transformed regularly into an equivalent one with the locally constant coefficients and without local times. Later, Le Gall
 \cite{legall1984one} generalized this approach and applied it to the general one-dimensional SDEs with local times of the unknown process. The resulting SDE has possibly discontinuous coefficients but does not involve local times. The general theory was outlined in 
 Engelbert and Schmidt \cite{engelbert1991strong}, see also Lejay \cite{Lejay-06}.

In higher dimensions, diffusion processes with reflection (i.e., with $\e\beta(x,y)=1$) are best studied. The existence problem for them is usually formulated in terms of a martingale problem (see, e.g., Stroock and Varadhan \cite{stroock1971diffusion}) or the Skorokhod problem (see, e.g., 
Tanaka
\cite{Tanaka1979}, and 
Lions and Sznitman
\cite{Lions_et1984}). Zaitseva \cite{Zaitseva2005} considered an SDE for a multidimensional Brownian motion with oblique skewness at a hyperplane 
provided that the skewing coefficient and the reflection direction are constant. Under this condition, the tangential coordinates do not depend 
on the normal one. This allows to obtain the strong existence and uniqueness of a solution by standard arguments.  
Unfortunately, this method is inapplicable in the case of oblique reflection since the coordinates are necessarily dependent. 

The general case of an oblique skew diffusion with non-constant coefficients is much more complicated and the question of existence and uniqueness is still open. 
Portenko \cite{portenko1976generalized,portenko1990generalized} formulated the problem of existence and uniqueness of a diffusion process with semipermeable membranes in term of a parabolic conjugation boundary problem. 
With the help of potential theory methods it was proved that the solution of such a problem
defines the semigroup of operators that corresponds to a process which diffusion characteristics exist in the sense of generalized functions. 
Portenko and Kopytko 
\cite{Portenko_et2012} obtained the same result for a
diffusion with a semipermeable membrane on a hyperplane with oblique reflection.
However it does not follow immediately from these works that the constructed process is a (unique) solution of an SDE with local time terms. 

We also mention the works by Ouknine et al.\ \cite{trutnau2015countably} and Ramirez \cite{ramirez2011multi} who 
investigated diffusions with infinitely many interfaces in dimensions one and two. 

Concerning the homogenization problem, besides the works cited in the beginning of the paper, 
the following authors studied homogenization of (regular) diffusions by analytic and stochastic methods:
Pardoux \cite{pardoux1999homogenization},
Pavliotis and Stuart \cite{pavliotis2008multiscale},
Hairer and Pardoux
\cite{hairer2008homogenization},
Makhno \cite{Makhno-12}.

Results on homogenization for diffusions with interfaces are rather sparse. Weinryb \cite{weinryb1984homogeneisation} 
studied the periodic homogenization of planar diffusions with permeable membranes on lines or circles under the assumption on existence and uniqueness of solution to the corresponding martingale problem. 
Hairer and Manson \cite{hairer2010one,hairer2011multi,hairer2010periodic} studied periodic diffusion homogenization with one interface. 
Limit theorems for one-dimensional diffusions with interfaces were obtained by Makhno \cite{makhno2016one,makhno2017diffusion}
and Krykun \cite{krykun2017convergence}.

\medskip
\noindent
\textbf{Sketch of the proof.}
The main idea of the proof is based on the decomposition of the solution $(X^\e,Y^\e)$ into segments that live on random time intervals between the 
subsequent hittings of the neighbouring membranes.
The existence and uniqueness result (Theorem \ref{t:exist}) for the system \eqref{e:XY} will be obtained with the help of a 
multivariate coordinate transformation $(U^\e,V^\e)=(F^\e(X^\e,Y^\e),G(X^\e,Y^\e))$ that transforms the system \eqref{e:XY}
into an equivalent system of SDEs with discontinuous coefficients \emph{without local times} in a neighbourhood of each membrane.
The uniqueness will follow from the result by Gao \cite{gao1993martingale}
and the global solution is obtained by glueing these solutions together (see Section \ref{s:eu}).

To prove the convergence Theorem \ref{t:hom} we note that the essential dynamics of the diffusion with local times $(X^\e, Y^\e)$ is catched by the embedded
Markov chain that comprises the values $(X^\e, Y^\e)$ at the hitting times of the membranes.  

To obtain the formula for the infinitesimal operator $\cL$ of the limit diffusion we study the pseudo-characteristic operator of the embedded Markov chain
\ba\label{eq:pseudo_generator}
\cL^\e f(x,y):=\frac{\E_{x,y} f(X^\e_{\tau^\e}, Y^\e_{\tau^\e})-f(x,y)}{\E_{x,y} \tau^\e},\quad x\in \{a_k^\e\}_{k\in\mathbb Z},\ y\in\bR^n,
\ea
where $\tau^\e$ is the first hitting time of a neighbouring membrane.

Let for brevity $n=1$. Writing the Taylor formula for a function $f\in C^3_b(\bR^2,\bR)$ we get
\ba\label{eq:pseudo_generator_Taylor}
\E_{x,y} f(X^\e_{\tau^\e}, Y^\e_{\tau^\e})-f(x,y)
&=f_x(x,y) \E_{x,y} (X^\e_{\tau^\e}-x)
+f_{y}(x,y) \E_{x,y} (Y^{\e}_{\tau^\e}-y)\\
&+\frac12 f_{xx}(x,y) \E_{x,y} (X^\e_{\tau^\e}-x)^2 
+f_{xy}(x,y) \E_{x,y} (X^\e_{\tau^\e}-x) (Y^{\e}_{\tau^\e}-y)\\
&+\frac12 f_{yy}(x,y) \E_{x,y} (Y^{\e}_{\tau^\e}-y)^2
+ \mathcal O\Big(\E_{x,y} |X^{\e}_{\tau^\e}-y|^3+\E_{x,y} |Y^{\e}_{\tau^\e}-y|^3 \Big)
\ea
We will be able to calculate the fine asymptotics of the average exit time from the current strip $\E \tau^\e$, which will be of the order $\e^2$
as well as the 
moments $\E_{x,y}(X_{\tau^\e}^\e-x)$, $\E_{x,y}(Y_{\tau^\e}^\e-y)$, etc, with accuracy $\mathcal O(\e^{3-\delta})$, $\delta\in (0,1)$ 
(see Lemma \ref{l:asympt} and 
Section \ref{s:Y}). 
These estimates will be derived by approximating the transformed diffusion without local times by a diffusion with 
coefficients
``frozen'' at the initial point $(x,y)$.

The limit second order generator $\cL$ that determines the limit diffusion $(X,Y)$, 
see equation \eqref{e:XYlim}, is a pointwise limit of the sequence $\{\cL^\e\}$ as $\e\to 0$.

The proof of the convergence Theorem \ref{t:hom}   consists of 
a) the standard step of establishing the weak relative compactness of the family $(X^\e,Y^\e)$ (Section \ref{s:wrc})
and b) of verification that any limit process $(X,Y)$ satisfies the (well-posed) martingale problem
\ba
\label{eq:686}
\E\Big[ f(X_t, Y_t)-f(X_s, Y_s)-\int_s^t \cL f(X_u, Y_u)\,\di u \Big|\rF_{s}\Big]=0,\quad f\in C^3_b(\bR^{n+1},\bR).
\ea

\medskip
\noindent
\textbf{Notation.} In this paper,
$|\cdot|$ denotes the Euclidiean distance in $\bR^n$, $n\geq 1$ as well as the Frobeinus norm of a matrix $A$, i.e., $|A|=(\sum_{i,j}a_{ij}^2)^{1/2}$;
$\|f\|_\infty=\sup_x|f(x)|$ is the supremum norm of a real-, vector- or matrix-valued function $f$; $x^+=\max\{x,0\}$. 
The Jacobian matrix of a vector-valued function $f$ is denoted by $Df$.
In particular, for a smooth $f$ we have $|f(x)-f(y)|\leq \|Df\|_\infty|x-y|$. 

Sometimes the constant $C>0$ denotes a generic constant that does not depend on $\e$; its value may vary within the same chain of inequalities. 

We write that $f(\e)\lesssim g(\e)$ if 
$|f(\e)|\leq C |g(\e)|$ for small $\e>0$. We also will write $f(\e)\lesssim \e^\infty$ if $f(\e)\lesssim \e^k$ for any $k\geq 1$.

We also recall that the symmetric local time $L^a(X)$ at $a\in\bR$ of a continuous real-valued semimartingale $X$ is the unique
non-decreasing process satisfying
\ba
|X_t-a|=|X_0-a|+\int_0^t \sgn (X_s-a)\,\di X_s+ L^a_t(X),
\ea
where
\ba
\sgn x=\begin{cases}
&         1,\quad x>0,\\
&         0,\quad x=0,\\
&         -1,\quad x<0,
        \end{cases}
\ea
see, e.g., Chapter VI in Revuz and Yor \cite{RevuzYor05}.
The local time $L^a(X)$ satisfies
\ba
L^a_t(X)=\lim_{\delta\to 0}\frac{1}{2\delta}\int_0^t \bI(|X_s-a|\leq \delta)\,\di \langle X\rangle_s.
\ea

\medskip

\noindent
\textbf{Acknowledgments:} O.A.\ and I.P.\ thank 
the German Research Council (grant Nr.\  PA 2123/7-1) and
the VolkswagenStiftung (grant Nr.\ 9B946) for financial support.
O.A.\ and A.P.\ were partially supported by the Alexander von Humboldt Foundation within the 
Research Group Linkage cooperation \emph{Singular diffusions: analytic and stochastic
approaches} between the University of Potsdam and the Institute of Mathematics of the National Academy of
Sciences of Ukraine.
 The authors are grateful to the anonymous referees for their helpful reports.

\section{Existence and uniqueness. Proof of Theorem \ref{t:exist}\label{s:eu}}

First, we consider the system \eqref{e:XY} in a neighbourhood of one membrane. Without loss of generality we fix $k=0$ so that
$a_0^\e\equiv 0$. Since outside the membrane $(X^\e,Y^\e)$ is a diffusion with regular coefficients, we assume that the initial values are $x=0$ and $y\in\bR^n$.
With some abuse of notation we denote $\beta(y):=\beta(0,y)$, $\theta(y):=\theta(0,y)$
and look for the solution $(X^\e,Y^\e)$ of the following SDE with one membrane:
\ba
\label{e:XY0}
X^\e_t&= \sum_{l=1}^m \int_0^t \sigma_{l}^0(X^\e_s,Y^\e_s)\,\di W^l_s + \int_0^t b^0(X^\e_s,Y_s^\e)\,\di s
+\e \int_0^t \beta(Y_s^\e)\,\di L^0_s (X^\e),\\
Y^{\e,i}_t&= y^i+\sum_{l=1}^m \int_0^t \sigma^{i}_l(X_s^\e,Y_s^\e)\,\di W^l_s +\int_0^t b^i(X_s^\e,Y_s^\e)\,\di s
+ \e\int_0^t  \beta(Y_s^\e) \theta^i(Y_s^\e)\,\di L^0_s (X^\e),\quad i=1,\dots,n.
\ea

We construct the transformation of $(X^\e,Y^\e)$ into a diffusion without the local time terms. Let $\e>0$ 
be small enough such that $\e\|\beta\|_\infty\leq 1/2$, 
and let
\ba
\label{e:BB}
B^\e(y):=\frac{1-\e\beta(y)}{1+\e\beta(y)},\quad y\in\bR^n. 
\ea
We have
\begin{align}
\label{e:B}
|B^\e(y)-1| &\leq 4\e\|\beta\|_\infty,\\
\label{e:nablaB}
\|\nabla B^\e\|_\infty & \leq 8\e \|\nabla \beta\|_\infty.
\end{align}
Define the functions
\ba
\label{e:FG}
F^\e(x,y)&=x\bI_{(-\infty,0)}(x)+x B^\e( y)\bI_{[0,\infty)}(x)=x + x^+ (B^\e(y)-1);\\
G(x,y)&:=y-x\theta(y).
\ea

It is clear that the transformation $(x,y)\mapsto (F^\e(x,y),G(x,y))$ does not have to be a one-to-one bijection of $\bR \times\bR^n$.
However, this map is injection  in a ``thin'' strip $[-A\e, A\e]\times \bR^n$, \ for any $A>0$ fixed and $\e>0$ small enough.

The proof will consist in the application of the Hadamard's global inverse function theorem, see Theorem 6.2.4 in Krantz and Parks \cite{krantz2002implicit}.

\begin{thm}[Hadamard]
\label{t:Hadamard}
Let $N\geq 1$, $H\colon \bR^N\to \bR^N$,  be a $C^2$ mapping. Suppose that $H(0) = 0$
and that the Jacobian determinant of $H$ is nonzero at each point. Finally, assume
that  
\ba
\|(DH)^{-1}(\cdot)\|_\infty <\infty.
\ea
 Then $H$ is a $C^2$ diffeomorphism.
\end{thm}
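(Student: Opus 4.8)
The plan is to deduce the three facts hidden in the statement — injectivity of $H$, surjectivity of $H$, and $C^2$-smoothness of the inverse — the last being automatic: since $DH(x)$ is invertible at every $x$, the inverse function theorem makes $H$ a \emph{local} $C^2$-diffeomorphism, so once $H$ is known to be a bijection its inverse is $C^2$ near every point, hence $C^2$. The hypothesis $H(0)=0$ plays no real role and only normalises things. So the whole task is bijectivity, and I would obtain both halves from a single globally defined flow.

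Set $M:=\sup_x\|(DH(x))^{-1}\|<\infty$. Fix $y\in\bR^N$ and introduce the vector field $f_y(x):=(DH(x))^{-1}\big(y-H(x)\big)$, which is $C^1$ (by Cramer's rule $x\mapsto(DH(x))^{-1}$ is $C^1$ because $H\in C^2$) and obeys $|f_y(x)|\le M\,|y-H(x)|$. Along any solution of $\dot x=f_y(x)$ one computes $\frac{d}{dt}\big(H(x(t))-y\big)=DH(x(t))f_y(x(t))=-\big(H(x(t))-y\big)$, hence $H(x(t))-y=e^{-t}\big(H(x(0))-y\big)$ and therefore $|\dot x(t)|\le M e^{-t}\,|H(x(0))-y|$. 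This exponential bound rules out finite-time blow-up, so the flow $(\Phi^y_t)_{t\ge 0}$ is defined for all $t\ge 0$ from every starting point, every trajectory has finite length, and $r_y(x_0):=\lim_{t\to\infty}\Phi^y_t(x_0)$ exists and, by continuity of $H$, satisfies $H(r_y(x_0))=y$. Surjectivity is then immediate: any $x_0$ (say $x_0=0$) yields a point of $H^{-1}(y)$.

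For injectivity I would argue as follows. Because $DH$ is invertible everywhere, $H$ is a local homeomorphism, so each fibre $H^{-1}(y)$ is a discrete closed set. The estimate $|\Phi^y_t(x_0)-\Phi^y_s(x_0)|\le M|H(x_0)-y|\,|e^{-s}-e^{-t}|$ is uniform over $x_0$ in compact sets, so $\Phi^y_t\to r_y$ locally uniformly; since each $\Phi^y_t$ is continuous (continuous dependence on initial data), $r_y\colon\bR^N\to H^{-1}(y)$ is continuous. A continuous map from the connected space $\bR^N$ into a discrete set is constant, so $r_y\equiv x^\ast$ for a single point $x^\ast$. But every $x_0\in H^{-1}(y)$ is an equilibrium of $f_y$ (there $f_y(x_0)=0$), whence $r_y(x_0)=x_0$; comparing with $r_y\equiv x^\ast$ forces $H^{-1}(y)=\{x^\ast\}$. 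Together with surjectivity, $H$ is a bijection, and by the first paragraph a $C^2$ diffeomorphism.

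The point needing care — and essentially the only obstacle — is this injectivity step, which rests on three facts all traceable to the hypotheses: completeness of the flow $(\Phi^y_t)_{t\ge0}$ (from the exponential decay of $|\dot x|$, which is precisely what the uniform bound $\|(DH)^{-1}\|_\infty<\infty$ buys), continuity of the limit map $r_y$ (again from that bound, via locally uniform convergence), and discreteness of the fibres (a second use of invertibility of the Jacobian). A more classical alternative is "$H$ is a local homeomorphism and proper, hence a covering map, hence — $\bR^N$ being simply connected — a global homeomorphism", but establishing properness directly from $\|(DH)^{-1}\|_\infty<\infty$ is itself most cleanly done via the same path-lifting/flow estimate, so the flow argument looks like the more economical route.
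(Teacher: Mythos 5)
Your proof is correct, but note that the paper does not prove Theorem \ref{t:Hadamard} at all: it is quoted as a known result with a reference to Krantz and Parks (Theorem 6.2.4), so there is no in-paper argument to compare against. What you supply is a complete, self-contained proof by the continuation (Newton-flow) method: the field $f_y(x)=(DH(x))^{-1}\bigl(y-H(x)\bigr)$ is $C^1$, the identity $\tfrac{d}{dt}\bigl(H(x(t))-y\bigr)=-(H(x(t))-y)$ gives the speed bound $|\dot x(t)|\le M e^{-t}|H(x_0)-y|$, which keeps trajectories in a compact set on finite time intervals and hence yields global existence, finite length, and convergence to a point of $H^{-1}(y)$ — that is surjectivity; and your injectivity step is sound: the fibre is discrete and closed (local invertibility), $\Phi^y_t\to r_y$ uniformly on compacts (the tail estimate $|r_y(x_0)-\Phi^y_t(x_0)|\le M|H(x_0)-y|e^{-t}$ plus continuous dependence on initial data on compact time intervals), so $r_y$ is a continuous map of the connected space $\bR^N$ into a discrete set, hence constant, while each point of the fibre is a fixed point of the flow, forcing the fibre to be a singleton. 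Bijectivity together with the inverse function theorem then gives the $C^2$ inverse, and $H(0)=0$ is indeed only a normalization. This route differs from the standard textbook treatment the paper cites, which argues via properness/path lifting and the covering-map property of local homeomorphisms over the simply connected $\bR^N$ (the alternative you sketch yourself); your flow argument has the advantage of producing the preimage constructively and avoiding covering-space language, at the cost of invoking ODE existence, uniqueness and continuous dependence. Either way, the uniform bound $\|(DH)^{-1}\|_\infty<\infty$ is exactly what makes the global step work, as you correctly isolate.
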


\begin{lem}
For any $A>0$ fixed and $\e>0$ small enough, the transformation $(F^\e,G)|_{[-A\e,A\e]\times \bR^n}$ can be continued to a Lipschitz continuous
homeomorphism $\bR\times\bR^n\to \bR\times\bR^n$
which is a $C^2$-diffeomorphism on each of the half-spaces $\bR_-\times\bR^n$ and $\bR_+\times\bR^n$.
Moreover, for $\e>0$ small enough
\ba
\label{e:incl}
{} [-A'\e, A'\e] \times \bR^n \subseteq(F^\e,G)([-A\e,A\e]\times \bR^n)\subseteq [- A''\e,A''\e]\times \bR^n
\ea
for some $0<A'\leq A''$.
\end{lem}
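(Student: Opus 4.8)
The obstacle to invoking Hadamard's theorem directly is that $(F^\e,G)$ is not $C^1$ on $\bR\times\bR^n$: it has a corner along the membrane $\{x=0\}$ coming from the term $x^+(B^\e(y)-1)$ in $F^\e$. The plan is to split the analysis along the two half-spaces, on each of them replace $(F^\e,G)$ by a globally defined $C^2$ map to which Hadamard's theorem does apply, and then glue the pieces. Concretely, I would fix a cutoff $\chi\in C^\infty(\bR,\bR)$ with $\chi(t)=t$ on $[-A\e,A\e]$, $\|\chi\|_\infty\le 2A\e$, and $0\le\chi'\le 1$ (so that $0\le\chi(t)\le t$ for $t\ge 0$, a property needed at the end for \eqref{e:incl}), and put, on all of $\bR\times\bR^n$,
\ben
H^\e_-(x,y):=\big(x,\ y-\chi(x)\theta(y)\big),\qquad H^\e_+(x,y):=\big(x+\chi(x)(B^\e(y)-1),\ y-\chi(x)\theta(y)\big),
\een
and then set $H^\e:=H^\e_-$ on $\{x\le 0\}$, $H^\e:=H^\e_+$ on $\{x\ge 0\}$. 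Since $\chi(0)=0$, both pieces equal $(0,y)$ on $\{x=0\}$, so $H^\e$ is well defined; since $\chi(x)=x$ on $[-A\e,A\e]$ and $x^+=x$ for $x\ge 0$, $x^+=0$ for $x\le 0$, the restriction of $H^\e$ to the strip $[-A\e,A\e]\times\bR^n$ is exactly $(F^\e,G)$; and both $H^\e_\pm$ are $C^2$ (because $\beta,\theta\in C^2_b$) and fix the origin.

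The core step is to verify that, for $\e$ small depending on $A$, both $H^\e_\pm$ meet the hypotheses of Theorem \ref{t:Hadamard} on $\bR^{n+1}$. Here I would write out the block Jacobians. Both share the lower-right block $I_n-\chi(x)D\theta(y)$, which, since $\|D\theta\|_\infty<\infty$ and $|\chi|\le 2A\e$, is uniformly invertible with inverse of norm $\le 2$ and determinant bounded away from $0$. The lower-left block, $-\chi'(x)\theta(y)$, is merely $O(1)$ but harmless. The upper-left block equals $1$ for $H^\e_-$ and $1+\chi'(x)(B^\e(y)-1)=1+O(\e)$ for $H^\e_+$ by \eqref{e:B}. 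The upper-right block is $0$ for $H^\e_-$ and $\chi(x)\nabla B^\e(y)=O(\e^2)$ for $H^\e_+$ by \eqref{e:nablaB}. A Schur-complement computation relative to the lower-right block then yields $\det DH^\e_\pm\neq 0$ everywhere and $\|(DH^\e_\pm)^{-1}\|_\infty<\infty$, so Hadamard's theorem gives that $H^\e_\pm$ are $C^2$-diffeomorphisms of $\bR^{n+1}$. I expect this Schur-complement bookkeeping, together with the construction of the cutoff $\chi$, to be the main (if routine) technical point.

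Granting this, gluing and the half-space statement follow from a sign argument. The first coordinate of $H^\e_-$ is $x$, while the first coordinate $g(x,y)=x+\chi(x)(B^\e(y)-1)$ of $H^\e_+$ satisfies $\partial_x g\ge 1-4\e\|\beta\|_\infty>0$ and $g(0,y)=0$, hence $\sgn g(x,y)=\sgn x$. Therefore each of $H^\e_\pm$ maps $\bR_-\times\bR^n$, $\{0\}\times\bR^n$, and $\bR_+\times\bR^n$ bijectively onto itself. This makes $H^\e$ a bijection of $\bR^{n+1}$ whose inverse is the (consistent) gluing of $(H^\e_-)^{-1}$ and $(H^\e_+)^{-1}$, hence a homeomorphism; $H^\e$ is globally Lipschitz because $DH^\e_\pm$ is bounded and the two pieces match along $\{x=0\}$; and on each open half-space $H^\e$ coincides with the $C^2$-diffeomorphism $H^\e_\pm$ mapping that half-space onto itself, so $H^\e$ is a $C^2$-diffeomorphism there.

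It remains to extract \eqref{e:incl}. On the strip, $|F^\e(x,y)|\le |x|\,(1+4\e\|\beta\|_\infty)\le 2A\e$ for $\e$ small, while $G(x,y)$ runs over all of $\bR^n$; this gives the right-hand inclusion with $A'':=2A$. Conversely, given $(u,v)$ with $|u|\le\tfrac{A}{2}\e$, its preimage $(x,y)=(H^\e)^{-1}(u,v)$ satisfies $x=u$ when $x\le 0$, and $u=g(x,y)\ge x\,(1-4\e\|\beta\|_\infty)\ge x/2$ when $x>0$ (using $0\le\chi(x)\le x$); in either case $|x|\le A\e$, so $(u,v)\in H^\e([-A\e,A\e]\times\bR^n)$. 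Hence the left-hand inclusion holds with $A':=\tfrac{A}{2}$, and $0<A'\le A''$ as required.
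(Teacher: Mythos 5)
Your proposal is correct and follows essentially the same route as the paper: extend $(F^\e,G)$ on each half-space via a cutoff to a globally $C^2$ map, verify the hypotheses of Theorem \ref{t:Hadamard} through the block-Jacobian estimates \eqref{e:B}, \eqref{e:nablaB}, glue along $\{x=0\}$ using that the first coordinate preserves sign, and obtain \eqref{e:incl} with $A'=A/2$ and $A''$ of order $A$ from the bound $|B^\e-1|\lesssim\e$. The only differences are cosmetic (you glue two globally defined maps $H^\e_\pm$ explicitly, while the paper extends on the right half-space with a compactly supported cutoff and treats the left half-space "similarly"), so no further changes are needed.
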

\begin{proof}
Let us extend the mapping $(F^\e,G)$ defined in \eqref{e:FG} from $(0,A\e)\times \bR^d$ to $\bR\times \bR^d$ as follows.
Let $g_\e\in C^\infty(\bR,\bR)$ be such that $g_\e(x)=x$ for $|x|\leq A\e$, 
$g_\e(x)=0$ for $|x|\geq 2A\e$ and
\ba
\label{eq:255}
\|g_\e\|_\infty\leq 2A\e,\quad  \|g'_\e\|\leq 2A.
\ea
For each $\e>0$, consider the function $H^\e:=(\widetilde F^\e, \widetilde G^\e)$
where
\ba
\widetilde F^\e(x,y)&=x+g_\e(x) (B^\e(y)-1),\\
\widetilde G^\e(x,y)&=y-g_\e(x)\theta(y).
\ea
Then $H^\e$ is $C^2$ function that coincides with $( F^\e,  G)$ on $(0,A\e)\times \bR^n$ and 
is the identity map on $(\bR \setminus(-2A\e,2A\e))\times \bR^d$. Moreover we have 
that 
\ba
&H^\e([0,\infty)\times \bR^n)\subseteq [0,\infty)\times \bR^n,\\
&H^\e((-\infty,0)\times \bR^n)\subseteq (-\infty,0)\times \bR^n
\ea
if $\e$ is sufficiently small. Note that $\sup_{x,y} |D H^\e(x,y)|<\infty$.  Hence, Hadamard's theorem will yield  that
$H$ is a $C^2$-diffeomorphism of $(0,\infty)\times \bR^n$ if we show that 
$\inf_{x,y}\det D H^\e(x,y)>0$. This follows, however, from \eqref{e:B}, \eqref{e:nablaB}, \eqref{eq:255},  and the observation that
\ba
D H^\e(x,y)
=\begin{pmatrix}
       1+ g'_\e(x) (B^\e(y)-1) & g_\e(x) D B^\e (y)\\
        - g'_\e(x) \theta(y) & \text{Id}- g_\e(x)D \theta(y)\\
       \end{pmatrix}.
\ea
The inclusion 
\ba
H^\e([0,A\e]\times \bR^n)\subseteq [0 ,A''\e]\times \bR^n
\ea
is obvious for $A''$ large enough.

Let us verify the inclusion
\ba
\label{eq:incl_diff}
[0, A'\e] \times \bR^n \subseteq H^\e([0,A\e]\times \bR^n)
\ea
for small $A'>0$. 

Notice that if $x>A\e$, then $\widetilde F^\e(x,y)>A\e (1-2\e \|\beta\|_\infty)$. Hence 
\ba
{}[0,A\e (1-2\e \|\beta\|_\infty)]\times \bR^n  \cap H^\e([ A\e, \infty)\times \bR^n)=\emptyset.
\ea
Since $H^\e([0,\infty)\times \bR^n)= [0,\infty)\times \bR^n$, we have 
\eqref{eq:incl_diff} with $A'=A/2$ if $\e \|\beta\|_\infty<1/4.$

The extension of $(F^\e,G)$ to the left half-space can is constructed similarly.
\end{proof}

We denote
\ba
U^\e&:=F^\e(X^\e, Y^\e), \\
V^\e&:=G(X^\e, Y^\e).\\
\ea
Let $(\Phi^\e,\Psi^\e):=(F^\e,G)^{-1}$ be the 
inverse mapping in some strip $[-A\e,A\e]\times \bR^n$, i.e.,
\ba
\label{e:PP}
X^\e&=\Phi^\e(U^\e,V^\e),\\
Y^\e&=\Psi^\e(U^\e,V^\e).
\ea
We set $\tau^\e:=\inf\{t\geq 0\colon X^\e_t\notin (-A\e,A\e)\}$ and  $\tilde\tau^\e:=\inf\{t\geq 0\colon (U^\e_t, V^\e_t)\notin (F^\e, G)\left((-A\e,A\e)\times \bR^n\right)\}$.

\begin{prp}
The process $(X^\e,Y^\e)_{t\leq \tau^\e}$ is a (weak) solution of \eqref{e:XY0} if and only if $(U^\e,V^\e)_{t\leq \tilde\tau^\e}$ is a (weak) solution of
\ba
\label{e:UV0}
U^\e_t
&= \int_0^t   \Big[ b^0(\cdot,\cdot)+ \phi^{\e, 0}(\cdot,\cdot) \Big]\circ \Big(\Phi^\e(U_s^\e,V_s^\e),\Psi^\e(U_s^\e,V_s^\e)\Big) \,\di s\\
&+\sum_{l=1}^m\int_0^t   \Big[ \sigma_{l}^0 (\cdot,\cdot) + \phi_{l}^{\e,0}(\cdot,\cdot)     \Big]
\circ \Big(\Phi^\e(U_s^\e,V_s^\e),\Psi^\e(U_s^\e,V_s^\e)\Big)  \,\di W^l_s,   \\
V^{\e,i}_t&=y^i
+\int_0^t \Big[ b^i(\cdot,\cdot)-\theta^i(\cdot)b^0(\cdot,\cdot))
-  \sum_{j=1}^n  \theta^i_{y^j}(\cdot)\Sigma^{i0}(\cdot,\cdot) +\psi^i(\cdot,\cdot)\Big] \circ\Big(\Phi^\e(U_s^\e,V_s^\e),\Psi^\e(U_s^\e,V_s^\e)\Big)  \,\di s\\
&+\sum_{l=1}^m \int_0^t \Big[  \sigma_{l}^i(\cdot,\cdot)
- \theta^i(\cdot)\sigma_{l}^0(\cdot,\cdot)
 -\psi_{l}^i(\cdot,\cdot)  \Big] \circ \Big(\Phi^\e(U_s^\e,V_s^\e),\Psi^\e(U_s^\e,V_s^\e)\Big)   \,\di W^l_s, \\
\ea
where
\ba
\phi^{\e,0}(x,y)&=
(B^\e(y)-1)  b^0(x,y) \bI(x> 0)
+ \sum_{i=1}^n B_{y^i}^\e(y) \Big(  x^+b^i(x,y)+   \Sigma^{0i}(x,y)\bI(x> 0)\Big)
\\+ & \frac12 \sum_{i,j=1}^n x^+B^\e_{y^iy^j}(y)\Sigma^{ij}(x,y),
\\
\phi_{l}^{\e,0}(x,y)
&=(B^\e(y)-1)\bI(x> 0)  \sigma_{l}^0(x,y) + x^+ \sum_{i=1}^n B^\e_{y^i}(y) \sigma_{l}^i(x,y),\\
\psi^{i}(x,y)
&=-  \sum_{j=1}^n x\theta_{y^j}^i(y)   b^j(x,y)
-\frac12 \sum_{j,k=1}^n x\theta^i_{y^jy^k}(y)\Sigma^{jk}(x,y),\\
\psi_{l}^{i}(x,y)&=-\sum_{j=1}^n x\theta^i_{y^j}(y)\sigma_{l}^j(x,y),\quad i=1,\dots,n,\ l=1,\dots,m.
\ea
\end{prp}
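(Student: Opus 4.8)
The plan is to derive both implications from the change-of-variables (Itô--Tanaka) formula for continuous semimartingales, applied to the maps $(F^\e,G)$ in the direct direction and to their inverse $(\Phi^\e,\Psi^\e)$ in the converse one. Throughout one works on the stopped interval $[0,\tau^\e]$, resp.\ $[0,\tilde\tau^\e]$, where by the previous lemma $(F^\e,G)$ restricts to a Lipschitz homeomorphism onto a strip that is a $C^2$-diffeomorphism on each half-space $\bR_\pm\times\bR^n$, with inverse $(\Phi^\e,\Psi^\e)$ of the same structure. The map $G(x,y)=y-x\theta(y)$ is globally $C^2$, so ordinary Itô's formula applies to $G(X^\e,Y^\e)$. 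The map $F^\e(x,y)=x+x^+(B^\e(y)-1)$ is Lipschitz, $C^2$ on each half-space, and for fixed $y$ has a single kink in $x$ at $x=0$ with one-sided slopes $\partial_xF^\e(0-,y)=1$, $\partial_xF^\e(0+,y)=B^\e(y)$; for such a function the generalized Itô formula for $F^\e(X^\e_t,Y^\e_t)$ holds and produces, besides the usual first- and second-order terms, a term $\tfrac12(B^\e(Y^\e_t)-1)\,\di L^0_t(X^\e)$. Here Assumption $\mathbf A_\Sigma$ guarantees that $X^\e$ (resp.\ $U^\e$) is a continuous semimartingale with a jointly continuous family of local times and that $\int_0^\cdot\bI(X^\e_s=0)\,\di s=0$, $\int_0^\cdot\bI(X^\e_s=0)\,\di X^\e_s=0$.

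\emph{Direct implication.} Assume $(X^\e,Y^\e)_{t\le\tau^\e}$ solves \eqref{e:XY0} and put $U^\e=F^\e(X^\e,Y^\e)$, $V^\e=G(X^\e,Y^\e)$. The only non-routine point is the coefficient of $\di L^0_t(X^\e)$ in $\di U^\e_t$. Since $\mathrm{supp}\,\di L^0(X^\e)\subseteq\{t\colon X^\e_t=0\}$, every term carrying a factor $(X^\e_t)^+$ or $\sgn(X^\e_t)$ drops out, which removes the $\di L^0(X^\e)$-contributions from the $y$-derivatives of $F^\e$ and from all quadratic-variation terms. There remain: the singular part $\e\beta(Y^\e_t)\,\di L^0_t(X^\e)$ of $\di X^\e_t$ multiplied by $\partial_xF^\e$ at $x=0$, which by the symmetric convention equals $\tfrac12(1+B^\e(Y^\e_t))$; and the kink term $\tfrac12(B^\e(Y^\e_t)-1)\,\di L^0_t(X^\e)$. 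Their sum has coefficient $\tfrac12\bigl[(1+B^\e(Y^\e_t))\e\beta(Y^\e_t)+B^\e(Y^\e_t)-1\bigr]$, which vanishes because $B^\e(1+\e\beta)=1-\e\beta$ by \eqref{e:BB}. For $V^{\e,i}$, applying ordinary Itô to $G^i$ and evaluating the first-order coefficients at $x=0$ ($\partial_xG^i(0,Y^\e_t)=-\theta^i(Y^\e_t)$, $\partial_{y^j}G^i(0,Y^\e_t)=\delta^{ij}$) gives the $\di L^0_t(X^\e)$-coefficient $-\theta^i(Y^\e_t)\e\beta(Y^\e_t)+\e\beta(Y^\e_t)\theta^i(Y^\e_t)=0$. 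Thus $(U^\e,V^\e)$ carries no local-time term, and reading off the remaining regular first- and second-order coefficients and substituting $X^\e=\Phi^\e(U^\e,V^\e)$, $Y^\e=\Psi^\e(U^\e,V^\e)$ turns the expansion into \eqref{e:UV0}, identifying the extra coefficients with $\phi^{\e,0},\phi^{\e,0}_l,\psi^i,\psi^i_l$ (bounded thanks to \eqref{e:B}, \eqref{e:nablaB}). This last identification is routine though lengthy.

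\emph{Converse implication.} Assume $(U^\e,V^\e)_{t\le\tilde\tau^\e}$ solves \eqref{e:UV0} and set $X^\e=\Phi^\e(U^\e,V^\e)$, $Y^\e=\Psi^\e(U^\e,V^\e)$. The inverse maps are again Lipschitz, $C^2$ on each half-space, and for fixed $v$ have a kink in $u$ at $u=0$ with $\partial_u\Phi^\e(0-,v)=1$, $\partial_u\Phi^\e(0+,v)=1/B^\e(v)$ and $\partial_u\Psi^{\e,i}(0-,v)=\theta^i(v)$, $\partial_u\Psi^{\e,i}(0+,v)=\theta^i(v)/B^\e(v)$. Since $\di U^\e_t$ and $\di V^\e_t$ have no local-time component, the generalized Itô formula for $\Phi^\e(U^\e,V^\e)$ and $\Psi^\e(U^\e,V^\e)$ yields a local-time term only through these kinks, namely $\tfrac12(1/B^\e(V^\e_t)-1)\,\di L^0_t(U^\e)$ in $\di X^\e_t$ and $\tfrac12\theta^i(V^\e_t)(1/B^\e(V^\e_t)-1)\,\di L^0_t(U^\e)$ in $\di Y^{\e,i}_t$. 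Because the equation \eqref{e:UV0} for $U^\e$ has no local-time term, its finite-variation part does not charge $\{U^\e=0\}$, so the right and left local times of $U^\e$ at $0$ coincide; hence the local-time transfer rule for the monotone piecewise-$C^1$ map $u\mapsto\Phi^\e(u,v)$ gives $\di L^0_t(U^\e)=\tfrac{2B^\e(Y^\e_t)}{1+B^\e(Y^\e_t)}\,\di L^0_t(X^\e)$ on the common support $\{X^\e_t=0\}=\{U^\e_t=0\}$ (where $V^\e_t=Y^\e_t$). Substituting and using $\tfrac{1-B^\e}{1+B^\e}=\e\beta$, again by \eqref{e:BB}, turns the two local-time terms into $\e\beta(Y^\e_t)\,\di L^0_t(X^\e)$ and $\e\beta(Y^\e_t)\theta^i(Y^\e_t)\,\di L^0_t(X^\e)$; together with the identification of the regular coefficients — the direct computation read with $(\Phi^\e,\Psi^\e)$ in place of $(F^\e,G)$ — this yields \eqref{e:XY0}.

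The main obstacle is the correct bookkeeping of the local-time terms, and specifically the point that in $\di F^\e(X^\e_t,Y^\e_t)$ the singular part $\e\beta(Y^\e_t)\,\di L^0(X^\e)$ of $\di X^\e_t$ is \emph{not} annihilated but passes through with the symmetric weight $\tfrac12(1+B^\e(Y^\e_t))$ — equivalently, that the one-sided local times of the skew-type process $X^\e$ at $0$ differ from the symmetric one by the factors $1\pm\e\beta(Y^\e_t)$ — so that the choice $B^\e=(1-\e\beta)/(1+\e\beta)$ is exactly what is forced by requiring the total $\di L^0(X^\e)$-coefficient in $\di U^\e_t$ to vanish. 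In the converse direction the corresponding delicate point is the transfer of local time across the kink of $\Phi^\e$ (and $\Psi^\e$). The remaining steps — justifying the generalized Itô formula for globally Lipschitz functions that are $C^2$ off a hyperplane, and matching the regular drift and diffusion coefficients with the $\phi$'s and $\psi$'s — are routine.
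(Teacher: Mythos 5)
Your proposal is correct and follows essentially the same route as the paper: a change-of-variables computation in which the choice $B^\e=(1-\e\beta)/(1+\e\beta)$ makes the coefficient of $\di L^0(X^\e)$ in $\di U^\e$ (and in $\di V^{\e,i}$) vanish, and the converse is read off from the It\^o formula with local times applied to $(\Phi^\e,\Psi^\e)$. The only difference is packaging: the paper gets the forward direction from the Tanaka formula for $(X^\e)^+$ plus the product It\^o formula and simply cites Peskir's change-of-variable formula for the converse, whereas you use the Peskir-type formula in both directions and spell out the local-time transfer $\di L^0(U^\e)=(1-\e\beta(Y^\e))\,\di L^0(X^\e)$ (via the one-sided local times) that the paper leaves implicit.
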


\begin{proof}
Recall the Tanaka formula
\ba
(X^\e_t)^+=\int_0^t \Big(\bI(X^\e_s>0)+ \frac12 \bI(X^\e_s=0)\Big) \,\di X^\e_s + \frac12 L^0_t(X^\e).
\ea
Hence the application of 
 the It\^o formula to $B^\e(Y^\e)-1$ and  the product It\^o formula yields  that
 \ba
\label{e:UVXY}
U^\e_t
&= \int_0^t   \Big(  b^0+ \phi^{\e,0}\Big)(X^\e_s,Y^\e_s) \,\di s
+\sum_{l=1}^m\int_0^t   \Big( \sigma_{l}^0  + \phi_{l}^{\e,0}    \Big)(X^\e_s,Y^\e_s)\,\di W^l_s,   \\
V^{\e, i}_t&=y^i
+\int_0^t \Big( b^i -\theta^i b^0 - \sum_{j=1}^n  \theta^i_{y^j}\Sigma^{0j} + \psi^{i}\Big)(X_s^\e,Y_s^\e) \,\di s\\
&+\sum_{l=1}^m \int_0^t \Big(  \sigma_{l}^i - \theta^i \sigma_{l}^0 +\psi_{l}^{i}  \Big)(X_s^\e,Y_s^\e)\,\di W^l_s, \\
\ea
and the representation \eqref{e:UV0} follows immediately.
Note that all the coefficients of the system \eqref{e:UV0} are smooth on the half-spaces $\bR_\pm\times \bR^n$ and is discontinuous on the
hyperplane $U=0$. To transform the system \eqref{e:UV0} into \eqref{e:XY0} we apply the It\^o formula with local times as proven by Peskir
\cite{peskir2007change}.
\end{proof}

\begin{rem}
\label{r:small}
For $\e$ small enough,
the functions $\phi^{\e,0}$, $\phi^{\e,0}_l$, $\psi^{i}$, $\psi^{i}_l$ satisfy
\ba
&|\phi^{\e,0}|,|\phi^{\e,0}_l|\leq C\e(1+|x|),\\
&|\psi^{i}|,|\psi^{i}_l|\leq C|x|.
\ea
In particular, in a strip $[-A\e,A\e]\times \bR^n$, $A>0$, they are bounded by $C\e$ for some $C>0$.
\end{rem}

\noindent
\emph{Proof of Theorem \ref{t:exist}}.
We consider the system \eqref{e:UV0} in some strip $[-A''\e,A''\e]\times \bR^n$ 
extend all the coefficients in \eqref{e:UV0} such that they are bounded, smooth functions on the half-spaces $\{ u<0\}$ and $\{ u>0\}$
that may have a discontinuity on $\{u=0\}$. Note that the diffusion matrix of $(U^\e, V^\e)$ is bounded, uniformly elliptic on $\bR\times\bR^n$, 
continuous on the 
half-spaces $\{ u<0\}$ and $\{ u>0\}$ but may have a discontinuity on the hyperplane $\{0\}\times\bR^n$. 
Hence the system \eqref{e:UV0} has a weak solution by Krylov \cite{krylov1969ito}. In dimension one, i.e., for $n=0$, uniqueness follows
by means of the theorems by Nakao \cite{nakao1972pathwise} and Yamada and Watanabe \cite{yamada1971uniqueness}. In dimension two, i.e., for 
$n=1$, uniqueness follows again from Krylov \cite{krylov1969ito}. For $n\geq 2$, uniqueness follows from Theorem 1.1 in Gao \cite{gao1993martingale}.
Hence, this solution is also uniquely defined up to the exit from  the strip and this solution is independent 
of extension of coefficients to the whole space.
Since all the coefficients are bounded, the process $V^\e$ cannot explode in finite time with probability 1.
Hence the process $(X^\e,Y^\e)$ is also uniquely defined up to the moment of exiting from some strip around the membrane.
A global solution is obtained by gluing together the solutions in each strip.
By uniqueness, this solution is strong Markov.

Eventually we have to check that the solution does not blow up in a finite time. This will be shown in Lemma \ref{lem:compactCOnt} later.
\hfill$\Box$

\section{Dynamics around one membrane}

We consider the dynamics of the process $(X^\e,Y^\e)$ inside the strips $(a_{k-1}^\e,a_{k+1}^\e)$, $k\in\mathbb Z$.
For definiteness we assume in this section that $k=0$, $a_0^\e=0$ and
denote $a^\e_\pm=\pm a_{\pm 1}^\e >0$, $d:=d(0)$, $d':=d'(0)$.
Moreover we omit the argument $x=0$ in all functions, i.e., we write $\beta(y):=\beta(0,y)$, $b^i(y):=b^i(0,y)$ etc.

The aim of this section is to obtain accurate moment estimates for 
expectations that appear in  \eqref{eq:pseudo_generator_Taylor}. This will allow us to show that the operator $\cL^\e$ defined in \eqref{eq:pseudo_generator} approximates in some sense generator of the limit diffusion.

Due to assumption $\mathbf{A}_a$, the following expansion holds true:
\ba
\label{e:a}
a^\e_+&=d\e + \frac12 d'\e^2+ r^+(\e),\\
a^\e_- &=d\e - \frac12 d'\e^2+ r^-(\e),\\
\ea
where the $|r^\pm(\e)|\lesssim\e^3$ (uniformly over all membranes) and $\e>0$ small enough.
In particular,
\ba
a^\e_++a_-^\e=2 d\e + \mathcal O(\e^3),
\ea
Setting 
\ba
\label{eq:widehatA1}
\widecheck A=\frac12\inf_{x}d(x) \quad \text{and}\quad   \widehat A=2\sup_x d(x) 
\ea
we can  conclude that for small $\e>0$ 
\ba\label{eq:widehatA2}
{}[-\widecheck A\e,\widecheck A\e]\times \bR^n\subseteq [a_-^\e,a_+^\e]\times \bR^n\subseteq [-\widehat A\e,\widehat A\e]\times \bR^n.
\ea

\subsection{Properties of the coordinate transformation $(\Phi^\e,\Psi^\e)$}

Let $(\Phi^\e,\Psi^\e)$ be the inverse mapping to $(F^\e,G)$ defined in \eqref{e:FG}.

\begin{lem}
\label{l:estPhiPsi}
Let $A>0$ be fixed. For $\e>0$ small enough the following first order expansion holds true in a strip $[-A\e,A\e]\times \bR^n$:
\begin{align}
\label{e:Phi}
\Phi^\e(u,v)&=\begin{cases}
   u,\quad  (u,v)\in[-A\e,0]\times \bR^n,\\
   u \Big(1+2\e \beta(v)\Big)+ h_1^\e(u,v),\quad (u,v)\in(0,A\e]\times\bR^n,\\
              \end{cases}\\
\label{e:Psi}
\Psi^\e(u,v)&=v+\theta(v)u+h_2^\e(u,v),\quad (u,v)\in[-A\e,A\e]\times \bR^n
\end{align}
where 
\begin{align}
|h_1^\e(u,v)|&\lesssim \e^3 ,\\
|h_2^\e(u,v)|&\lesssim \e^2
\end{align}
uniformly in the strip $(u,v)\in[-A\e,A\e]\times \bR^n$.
\end{lem}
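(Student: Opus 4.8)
The plan is to construct $(\Phi^\e,\Psi^\e)$ pointwise by a Banach fixed-point argument, exploiting that on the thin strip $[-A\e,A\e]\times\bR^n$ the map $(F^\e,G)$ differs from the identity only by terms of order $\e$. First I would observe that the sign of the first coordinate is preserved: $F^\e(x,y)=x$ for $x\le 0$, whereas $F^\e(x,y)=xB^\e(y)$ with $B^\e(y)>0$ for $\e$ small (since $\e\|\beta\|_\infty\le 1/2$ gives $1/3\le B^\e\le 3$). Hence it suffices to invert over $\{u\le 0\}$ and over $\{u>0\}$ separately, and in each regime to solve the corresponding scalar--vector fixed-point equation for $y$.

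On $\{u\in[-A\e,0]\}$ one has $\Phi^\e(u,v)=u$ exactly, so $h_1^\e\equiv 0$, while $y=\Psi^\e(u,v)$ is the unique fixed point of $y\mapsto v+u\theta(y)$; since $\theta\in C^2_b$ and $|u|\le A\e$, this map is a contraction on $\bR^n$ with constant $A\e\|\nabla\theta\|_\infty<1$ for $\e$ small, uniformly in $v$. The fixed-point identity first yields the crude displacement bound $|y-v|=|u\,\theta(y)|\le A\e\|\theta\|_\infty\lesssim\e$, which I then feed back into $h_2^\e(u,v)=y-v-u\theta(v)=u\big(\theta(y)-\theta(v)\big)$ to get $|h_2^\e(u,v)|\le|u|\,\|\nabla\theta\|_\infty\,|y-v|\lesssim\e^2$.

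On $\{u\in(0,A\e]\}$ the equation $F^\e(x,y)=u$ reads $x=u/B^\e(y)$, and $y=\Psi^\e(u,v)$ must solve $y=v+\tfrac{u}{B^\e(y)}\theta(y)=:T(y)$. Using $1/3\le B^\e\le 3$, the bounds \eqref{e:B}--\eqref{e:nablaB}, and $\theta\in C^2_b$, the map $y\mapsto\theta(y)/B^\e(y)$ has a gradient bounded uniformly in $\e\le\e_0$, so $T$ maps $\bR^n$ into itself and is a contraction with constant $\lesssim\e$; its unique fixed point is $\Psi^\e(u,v)$ by the injectivity established in the preceding lemma. Again I extract the crude bound $|y-v|=|u|\,|\theta(y)|/B^\e(y)\lesssim\e$, hence $|\beta(y)-\beta(v)|\lesssim\e$ and $|\theta(y)-\theta(v)|\lesssim\e$. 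Expanding $1/B^\e(y)=\tfrac{1+\e\beta(y)}{1-\e\beta(y)}=1+2\e\beta(y)+\mathcal O(\e^2)$ uniformly (geometric series, $\e\|\beta\|_\infty\le 1/2$, with $|1/B^\e-1|\le 4\e\|\beta\|_\infty$), I obtain $\Phi^\e(u,v)=u/B^\e(y)=u(1+2\e\beta(v))+h_1^\e$ with $h_1^\e=2\e u(\beta(y)-\beta(v))+u\,\mathcal O(\e^2)$, so $|h_1^\e|\lesssim\e\cdot\e\cdot\e+\e\cdot\e^2\lesssim\e^3$ because $|u|\lesssim\e$; similarly $\Psi^\e(u,v)=v+u\theta(v)+h_2^\e$ with $h_2^\e=u\big(\tfrac{\theta(y)-\theta(v)}{B^\e(y)}+\theta(v)(\tfrac{1}{B^\e(y)}-1)\big)$, so $|h_2^\e|\lesssim|u|\cdot\e\lesssim\e^2$.

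The step that requires care is precisely this two-stage bookkeeping: one must first read off the crude displacement bound $|\Psi^\e(u,v)-v|\lesssim\e$ from the fixed-point identities and only afterwards replace $\beta(y),\theta(y)$ by $\beta(v),\theta(v)$, each such substitution costing one further power of $\e$ via the Lipschitz bounds on $\beta,\theta$; this is what upgrades the naive $\mathcal O(\e^2)$ and $\mathcal O(\e)$ remainders to the sharp $\mathcal O(\e^3)$ and $\mathcal O(\e^2)$ claimed in the statement. Since all contraction constants and remainder bounds depend only on $A$ and on the $C^2_b$-norms of $\beta$ and $\theta$, the estimates are uniform in $v\in\bR^n$ and valid for all $\e$ below a threshold $\e_0>0$.
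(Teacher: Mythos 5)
Your proposal is correct and follows essentially the same route as the paper: both work from the implicit identities $\Psi^\e=v+u\,\theta(\Psi^\e)$ (for $u\le 0$) and $\Psi^\e=v+\tfrac{u}{B^\e(\Psi^\e)}\theta(\Psi^\e)$, $\Phi^\e=u/B^\e(\Psi^\e)$ (for $u>0$), first extracting the crude bound $|\Psi^\e-v|\lesssim\e$ and then bootstrapping via the Lipschitz bounds on $\beta,\theta$ and $|1/B^\e-1|\le 4\e\|\beta\|_\infty$ to get the $\mathcal O(\e^2)$ and $\mathcal O(\e^3)$ remainders. Your explicit Banach contraction step is just a spelled-out justification of the unique solvability that the paper takes from the inverse map constructed in the preceding lemma.
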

\begin{proof}
1. Let $u\in [-A\e,0]$. Then
\ba
u&=x,\\
v&=y-\theta(y)x.
\ea
Hence $\Psi^\e=\Psi$ is a (unique) solution of the equation $\Psi-\theta(\Psi)u-v=0$. This yields
\ba
|\Psi-v|\leq \|\theta\|_\infty |u|
\ea
and
\ba
\Psi-v-\theta(v)u
&= \Big(\theta(\Psi)-  \theta(v)\Big)u.
\ea
So
\ba
|\Psi-v-\theta(v)u|
&\leq \|D\theta\|_\infty |\Psi-v| |u|\leq  \|D\theta\|_\infty\|\theta\|_\infty u^2 \lesssim \e^2.
\ea

\noindent
2. Analogously, let $u>0$. Then
$u= x B^\e(y)$ and $\Psi^\e$ satisfies
\ba
\Psi^\e-\frac{\theta(\Psi^\e)}{B^\e(\Psi^\e)}u-v=0.
\ea
Hence
\ba
|\Psi^\e-v|\leq \Big\|\frac{\theta }{B^\e }\Big\|_\infty |u|
\ea
and
\ba
\Big|\Psi^\e-v-\frac{\theta(v)}{B^\e(v)}u\Big|
& \leq   \Big|\frac{\theta(\Psi^\e)}{B^\e(\Psi^\e)}-\frac{\theta(v)}{B^\e(v)} \Big| \cdot  |u|
\leq  \Big\|D\Big(\frac{\theta}{B^\e}\Big)\Big\|_\infty\cdot \Big\|\frac{\theta}{B^\e}\Big\|_\infty\cdot  u^2.
\ea
Taking into account \eqref{e:BB}, \eqref{e:B} and \eqref{e:nablaB} we get that for $\e>0$ small enough $\|\theta/B^\e\|_\infty \leq 2\|\theta\|_\infty$
and $\|D(\theta/B^\e)\|_\infty\leq 2 \|D\theta\|_\infty+\|\theta\|_\infty$. Since $|1/B^\e(v)- 1|\leq 4\e\|\beta\|_\infty$
we have
\ba
\Big|\Psi^\e-v-\theta(v)u \Big|
\leq \Big|\Psi^\e-v-\frac{\theta(v)}{B^\e(v)}u\Big|+\Big|\Big(\frac{1}{B^\e(v)}-1\Big)\theta(v)u\Big|
\leq C(u^2+|u|\e)\lesssim \e^2.
\ea
Using the previous estimate we obtain
\ba
\Big|\Phi^\e- u (1+2\e \beta(v))  \Big|&=|u|\Big| \frac{1}{B^\e(\Psi^\e)}- 1-2\e\beta(\Psi^\e)\Big|
+ 2\e |u| |\beta(\Psi^\e)-\beta(v)|\\
&\leq 3|u|\e^2\|\beta\|_\infty +2\e |u|\|\nabla \beta \|_\infty|\theta(v)u + h_2^\e(u,v)|\lesssim \e^3. 
\ea
\end{proof}

\subsection{Rough estimates of the exit time from the strip}

Let
\ba
\tau^\e=\inf\{t\geq 0\colon X_t^\e\notin (-a^\e_-,a_+^\e)\}.
\ea

To study the exit from the strip, we use the transformation \eqref{e:UVXY}
where $U^\e,V^\e$ are It\^o processes without the local time terms.

We extend the processes $(U^\e_t,V^\e_t)_{t\leq \tau^\e}$ obtained in \eqref{e:UVXY} to $t\geq 0$ by setting
\ba
\label{e:U}
U^\e_t
&= \int_0^t   \Big(  b^0+ \phi^{\e,0}\Big)(X^\e_{s\wedge\tau^\e},Y^\e_{s\wedge\tau^\e}) \,\di s
+\sum_{l=1}^m\int_0^t   \Big( \sigma_{l}^0  + \phi_{l}^{\e,0}    \Big)(X^\e_{s\wedge\tau^\e},Y^\e_{s\wedge\tau^\e})\,\di W^l_s   \\
V^{\e,i}_t&=y^i
+\int_0^t \Big( b^i -\theta^i b^0 - \sum_{j=1}^n  \theta^i_{y^j}\Sigma^{0j} + \psi^{i}\Big)(X^\e_{s\wedge\tau^\e},Y^\e_{s\wedge\tau^\e}) \,\di s\\
&+\sum_{l=1}^m \int_0^t \Big(  \sigma_{l}^i - \theta^i \sigma_{l}^0 +\psi_{l}^{i}  \Big)(X^\e_{s\wedge\tau^\e},Y^\e_{s\wedge\tau^\e})\,\di W^l_s. \\
\ea

With the help of \eqref{e:Phi} we have
\ba
&X_{\tau^\e}^\e=-a^\e_-\quad \Leftrightarrow\quad  U_{\tau^\e}^\e=-a^\e_-,\\
&X_{\tau^\e}^\e=a^\e_+\quad \Leftrightarrow\quad  a^\e_+=\Phi^\e(U_{\tau^\e}^\e,V_{\tau^\e}^\e)=U^\e_{\tau^\e}(1+2\e\beta(V^\e_{\tau^\e}))
+h_1^\e(U^\e_{\tau^\e},V^\e_{\tau^\e}).
\ea
Recall constants $\widecheck A$, $
\widehat A$ from \eqref{eq:widehatA1}, \eqref{eq:widehatA2}, and define the stopping times 
\ba
\label{e:taus}
\widecheck\tau^\e&=\inf\Big\{t\geq 0\colon U_t^\e\notin (-\widecheck A\e,\widecheck A \e)\Big\},\\
\widehat\tau^\e&=\inf\Big\{t\geq 0\colon U_t^\e\notin (-\widehat A\e,\widehat A\e)\Big\} 
\ea
such that
\ba
\label{e:tauss}
\widecheck\tau^\e\leq \tau^\e\leq \widehat \tau^\e.
\ea

\begin{lem}
\label{l:tau}
There are $\gamma, A\in (0,\infty)$ such that for all $\e>0$ small enough
\ba
\sup_{y\in\bR^n}\E_{0,y}\ex^{\gamma \e^{-2}\tau^\e}\leq A.
\ea
Consequently,
\ba
\sup_{y\in\bR^n} \E_{0,y} (\tau^\e)^k \lesssim \e^{2k},\quad k\geq 1.
\ea
Moreover,
\ba
\label{eq:estimate_exit_below}
\inf_{y\in\bR^n} \E_{0,y} \tau^\e \gtrsim\e^2.
\ea
\end{lem}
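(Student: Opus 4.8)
The plan is to transfer the whole problem to the process $U^\e$, which by \eqref{e:tauss} satisfies $\widecheck\tau^\e\le\tau^\e\le\widehat\tau^\e$, so it suffices to prove the exponential moment bound for $\widehat\tau^\e$ and the lower bound $\E_{0,y}\widecheck\tau^\e\gtrsim\e^2$. Recall from \eqref{e:U} that $U^\e$ is an It\^o process whose diffusion coefficient is, up to the $\phi^{\e,0}_l$-perturbations (which are $\mathcal O(\e)$ in the strip by Remark~\ref{r:small}), the first row of the original $\sigma$; in particular, by Assumption $\mathbf A_\Sigma$ the quadratic variation of $U^\e$ grows at rate $\langle U^\e\rangle_t'=\Sigma^{00}+\mathcal O(\e)\ge c>0$ uniformly, and the drift of $U^\e$ is bounded uniformly in $\e$ and $y$. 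So $U^\e$ is, uniformly in $\e$ and the starting point $y$, a one-dimensional It\^o process with bounded drift and uniformly elliptic, bounded diffusion coefficient, and $\widehat\tau^\e$ is its exit time from an interval of width $\asymp\e$.

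First I would establish the exponential moment bound. The standard device: fix $\lambda>0$ large and consider $w_\e(u)=\ex^{\lambda u/\e}$ on $[-\widehat A\e,\widehat A\e]$. Applying It\^o's formula to $\ex^{\mu t}w_\e(U^\e_{t\wedge\widehat\tau^\e})$ with $\mu=\gamma\e^{-2}$, one computes the generator: $\mathcal A w_\e=\tfrac12(\Sigma^{00}+\mathcal O(\e))\tfrac{\lambda^2}{\e^2}w_\e+(\text{bounded})\tfrac{\lambda}{\e}w_\e\ge(\tfrac{c\lambda^2}{2}-C\lambda)\e^{-2}w_\e$, which for $\lambda$ large enough dominates $2\gamma\e^{-2}w_\e$ when $\gamma$ is chosen small. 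Hence $\ex^{\mu(t\wedge\widehat\tau^\e)}w_\e(U^\e_{t\wedge\widehat\tau^\e})$ is a submartingale, so on $\{\widehat\tau^\e\le t\}$ we get $\ex^{\mu\widehat\tau^\e}\le\ex^{\lambda\widehat A}w_\e(U^\e_{\widehat\tau^\e})^{-1}\cdot(\dots)$; more precisely, comparing the value at $t\wedge\widehat\tau^\e$ with the value at $0$ and using $w_\e\ge 1$ throughout (since $U^\e_0=0$, $w_\e(0)=1$, and we may instead use $v_\e(u)=\cosh(\lambda u/\e)$ which is $\ge 1$ and $\le\cosh(\lambda\widehat A)$ on the strip) yields $\E_{0,y}\ex^{\mu(t\wedge\widehat\tau^\e)}\le\cosh(\lambda\widehat A)$, and Fatou gives $\E_{0,y}\ex^{\gamma\e^{-2}\widehat\tau^\e}\le A:=\cosh(\lambda\widehat A)$ uniformly in $y$. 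The polynomial moment bounds $\E_{0,y}(\tau^\e)^k\lesssim\e^{2k}$ follow since $t^k\le k!\,\gamma^{-k}\e^{2k}\ex^{\gamma\e^{-2}t}$.

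For the lower bound \eqref{eq:estimate_exit_below}, I would run the same comparison in the other direction. Take $\varphi_\e(u)=1-(u/(\widecheck A\e))^2$, which is $\ge 0$ on $[-\widecheck A\e,\widecheck A\e]$, equals $1$ at $u=0$, and vanishes at the endpoints. Then $\mathcal A\varphi_\e(u)=-\tfrac{\Sigma^{00}+\mathcal O(\e)}{(\widecheck A\e)^2}-\tfrac{2u\,(\text{bounded})}{(\widecheck A\e)^2}$, so $|\mathcal A\varphi_\e|\le K\e^{-2}$ uniformly for a constant $K$ (here uniform ellipticity and boundedness of all coefficients enter, and for $\e$ small the $\mathcal O(\e)$ terms are harmless). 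Dynkin's formula applied to $\varphi_\e(U^\e_{t\wedge\widecheck\tau^\e})$ gives
\[
\E_{0,y}\varphi_\e(U^\e_{t\wedge\widecheck\tau^\e})=1+\E_{0,y}\int_0^{t\wedge\widecheck\tau^\e}\mathcal A\varphi_\e(U^\e_s)\,\di s\ge 1-K\e^{-2}\,\E_{0,y}(t\wedge\widecheck\tau^\e).
\]
Since the left side is $\le 1$, letting $t\to\infty$ (using $\widecheck\tau^\e<\infty$ a.s. and monotone convergence) yields $\E_{0,y}\widecheck\tau^\e\ge K^{-1}\e^2/ (1) $; more carefully, the left side is bounded above by $\P_{0,y}(\widecheck\tau^\e>t)\cdot 1$ which tends to $0$, so in the limit $0\ge 1-K\e^{-2}\E_{0,y}\widecheck\tau^\e$, i.e. $\E_{0,y}\widecheck\tau^\e\ge\e^2/K$, uniformly in $y$. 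Combined with $\widecheck\tau^\e\le\tau^\e$ this is \eqref{eq:estimate_exit_below}.

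The main obstacle — really the only thing requiring care — is verifying that all the constants ($c$ for ellipticity, $C$ for the drift bound, hence $\lambda$, $\gamma$, $A$, $K$) can be chosen \emph{uniformly in $\e$ and in the starting point $y\in\bR^n$}. This uses Remark~\ref{r:small} (the perturbations $\phi^{\e,0},\phi^{\e,0}_l$ are $\mathcal O(\e)$ in the strip, so they do not spoil uniform ellipticity of $\langle U^\e\rangle'$ once $\e$ is small), Assumption $\mathbf A_\Sigma$ (uniform positive definiteness of $\Sigma$, giving a lower bound on $\Sigma^{00}$ independent of $(x,y)$), and $\mathbf A_{\mathrm{coeff}}$ (boundedness of $b^i,\sigma^i_l,\theta^i$ and their derivatives, giving the uniform drift bound). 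A minor point is that we only know the SDE \eqref{e:U} has a weak solution, so all estimates should be phrased via the martingale/Dynkin formulation rather than pathwise, but this is routine.
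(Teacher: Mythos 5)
Your reduction to $U^\e$ via $\widecheck\tau^\e\le\tau^\e\le\widehat\tau^\e$, the step from exponential to polynomial moments, and the lower bound via Dynkin's formula with the test function $1-(u/(\widecheck A\e))^2$ are all correct and essentially coincide with the paper's proof (which applies It\^o's formula to $|U^\e|^2$ for the lower bound). The genuine gap is in the exponential-moment step: your inequality points the wrong way. From $\mathcal A v_\e\ge 2\gamma\e^{-2}v_\e$ you correctly conclude that $\ex^{\mu t}v_\e(U^\e_{t\wedge\widehat\tau^\e})$ is a submartingale, but a submartingale started at $v_\e(0)=1$ only yields the lower bound $\E_{0,y}\bigl[\ex^{\mu(t\wedge\widehat\tau^\e)}v_\e(U^\e_{t\wedge\widehat\tau^\e})\bigr]\ge 1$; it gives no control from above, and the claimed bound $\E_{0,y}\ex^{\mu(t\wedge\widehat\tau^\e)}\le\cosh(\lambda\widehat A)$ does not follow (you cannot bound the expectation by the supremum of the test function, because the factor $\ex^{\mu t}$ is unbounded in $t$). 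To bound $\E_{0,y}\ex^{\mu\widehat\tau^\e}$ you need the supermartingale direction, i.e.\ a function $g$ with $1\le g\le M$ on the strip and $\mathcal A g+\mu g\le 0$; such a $g$ must attain its maximum inside the strip, so a convex choice like $\cosh(\lambda u/\e)$, whose generator is nonnegative, cannot serve in this role.

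The fix is short. Take for instance $g_\e(u)=2\cosh(\lambda\widehat A)-\cosh(\lambda u/\e)$: on $[-\widehat A\e,\widehat A\e]$ one has $\cosh(\lambda\widehat A)\le g_\e\le 2\cosh(\lambda\widehat A)$ and, using uniform ellipticity of the perturbed diffusion coefficient and boundedness of the drift, $\mathcal A g_\e\le -\tfrac{c\lambda^2}{2\e^2}+C\lambda\e^{-1}\sinh(\lambda\widehat A)\le-\tfrac{c\lambda^2}{4\e^2}$ for $\e$ small; hence for $\gamma$ small $\ex^{\gamma t/\e^2}g_\e(U^\e_{t\wedge\widehat\tau^\e})$ is a supermartingale, which gives $\E_{0,y}\ex^{\gamma(t\wedge\widehat\tau^\e)/\e^2}\le g_\e(0)/\cosh(\lambda\widehat A)\le 2$ uniformly in $y$, and monotone convergence finishes. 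This is exactly the structure of the paper's argument, whose Lyapunov function $\ex^{\gamma t/\e^2}(\alpha\e^2-x^2)$, $\alpha>\widehat A^2$, is concave in $x$ for the same reason. Alternatively you may keep the convex $v_\e$ but pair it with the factor $\ex^{-2\mu t}$: the resulting submartingale gives $\E_{0,y}\ex^{-2\mu\widehat\tau^\e}\ge 1/\cosh(\lambda\widehat A)$, hence $\P(\widehat\tau^\e\le T_0\e^2)\ge\delta_0>0$ uniformly over starting points in the strip, and an iteration over time blocks of length $T_0\e^2$ using the Markov property yields a geometric tail for $\widehat\tau^\e/\e^2$ and thus the exponential moment --- correct, but longer than the supermartingale route.
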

\begin{proof} 
1. We use \eqref{e:tauss} and
show that $\E_{0,y}\ex^{\gamma \widehat\tau^\e/\e^2}\leq A$ for all $\e>0$ small.
Consider the Lyapunov function
\ba
h^\e(t,x)=\ex^{\frac{\gamma t}{\e^2}}(\alpha \e^2-x^2).
\ea
For $\alpha >\widehat A^2$, $h^\e(t,x)\geq 0$ on $x\in[-\widehat A\e,\widehat A\e]$.

We have
\ba
\partial_t h^\e(t,x)&= \frac{\gamma}{\e^2}  \ex^{\frac{\gamma t}{\e^2}}(\alpha \e^2-x^2),\\
\partial_x h^\e(t,x)&= -2 \ex^{\frac{\gamma t}{\e^2}}x,\\
\partial_{xx} h^\e(t,x)&= -2 \ex^{\frac{\gamma t}{\e^2}}.
\ea
The It\^o formula yields for every $N\geq 1$:
\ba
\E_{0,y} h(\widehat\tau^\e\wedge N,U^\e_{\widehat\tau^\e\wedge N})
&=\alpha \e^2
+\E_{0,y}\int_0^{\widehat\tau^\e\wedge N} \frac{\gamma}{\e^2}  \ex^{\frac{\gamma s}{\e^2}}(\alpha\e^2-|U^\e_s|^2)\,\di s\\
&-\E_{0,y}\int_0^{\widehat\tau^\e\wedge N}  2 \ex^{\frac{\gamma s}{\e^2}}U_s^\e
\Big(b^0(X_{s\wedge\tau^\e}^\e,Y_{s\wedge\tau^\e}^\e)+\phi^{\e,0}(X_{s\wedge\tau^\e}^\e,Y_{s\wedge\tau^\e}^\e )\Big)\,\di s\\
&- \sum_{l=1}^m \E_{0,y} \int_0^{\widehat\tau^\e\wedge N}  \ex^{\frac{\gamma s}{\e^2}}
\Big( \sigma_{l}^0( X_{s\wedge\tau^\e}^\e,Y_{s\wedge\tau^\e}^\e ) + \phi^{\e,0}_l(X_{s\wedge\tau^\e}^\e,Y_{s\wedge\tau^\e}^\e  )    \Big)^2\,\di s.\\
\ea
By Assumption \textbf{A}$_\Sigma$  and Remark \ref{r:small} there are $\delta_1,\delta_2\in (0,\infty)$ such that
\ba
&\sum_{l=1}^m (\sigma_{l}^0(x,y)+ \phi^{\e,0}_l(x,y) )^2\geq \delta_1,\\
&|b^0(x,y)+\phi^{\e,0}(x,y)|\leq \delta_2\quad \text{ on } (x,y)\in [-\widehat A\e,\widehat A\e]\times \bR^n.
\ea
Since $|U^\e_s|\leq \widehat A\e$ on $s\leq \widehat\tau^\e$, we have
\ba
\frac{\gamma}{\e^2}  (\alpha\e^2-|U^\e_s|^2)
&-2 U_s^\e (b^0(X_{s\wedge\tau^\e}^\e,Y_{s\wedge\tau^\e}^\e)+\phi^{\e,0}(X_{s\wedge\tau^\e}^\e,Y_{s\wedge\tau^\e}^\e))- \sum_{l=1}^m \Big( \sigma_{l}^0(X_{s\wedge\tau^\e}^\e,Y_{s\wedge\tau^\e}^\e) + \phi_{l}^{\e,0}(X_{s\wedge\tau^\e}^\e,Y_{s\wedge\tau^\e}^\e)    \Big)^2 \\
&\leq \gamma \alpha
+\widehat A \e \delta_2 - \delta_1  \leq -\frac{\delta_1}{2}
\ea
for some $\gamma>0$ small and all $\e>0$ small.

Hence
\ba
0&\leq \alpha \e^2 -\frac{\delta_1}{2}\E_{0,y}\int_0^{\widehat\tau^\e\wedge N} \ex^{\frac{\gamma s}{\e^2}}\,\di s
\leq \alpha \e^2 - \frac{\delta_1}{2}   \frac{\e^2}{\gamma} \E_{0,y} \ex^{\frac{\gamma (\widehat\tau^\e\wedge N)}{\e^2}} + \frac{\delta_1}{2}   \frac{\e^2}{\gamma},\\
\E_{0,y} \ex^{\frac{\gamma (\widehat\tau^\e\wedge N)}{\e^2}}&\leq \frac{2\alpha \gamma}{\delta_1}+1=:A,\quad N\geq 1,\\
\ea
and the statement follows for $\widehat\tau^\e$ by the monotone convergence theorem.
Using that $x^k\leq \ex^x k!$ we get
\ba
&\E_{0,y} (\widehat\tau^\e)^k \leq A \frac{k!}{\gamma^k}\e^{2k}\lesssim \e^{2k}.  
\ea
2. Applying the It\^o formula to $|U^\e|^2$ yields
\ba
\widecheck A\e^2\leq \E_{0,y} |U^\e_{\widecheck\tau^\e}|^2
&= 2\E_{0,y} \int_0^{\widecheck\tau^\e} U^\e_s
\Big( b^0(X_{s\wedge\tau^\e}^\e,Y_{s\wedge\tau^\e}^\e)+\phi^{\e,0}(X_{s\wedge\tau^\e}^\e,Y_{s\wedge\tau^\e}^\e )  \Big) \,\di s \\
&+ \sum_{l=1}^m\E_{0,y}\int_0^{\widecheck\tau^\e}
\Big( \sigma_{l}^0( X_{s\wedge\tau^\e}^\e,Y_{s\wedge\tau^\e}^\e ) + \phi_{l}^{\e, 0}(X_{s\wedge\tau^\e}^\e,Y_{s\wedge\tau^\e}^\e  )    \Big)^2 \,\di s \\
&\leq 2\widecheck A \e\delta_2 \E_{0,y}\widecheck\tau^\e +    3\sum_{l=1}^m\|\sigma_{l}^0 \|^2_\infty \cdot \E_{0,y}\widecheck\tau^\e
\ea
and the lower bound for $\E_{0,y}\tau^\e$ follows.
\end{proof}

\begin{lem}
\label{l:V}
For each $k\geq 1$ and $\e>0$ small enough
 \ba
&\E_{0,y} \sup_{t\leq \tau^\e } |V^\e_t-y |^{2k}\lesssim \e^{2k},\\
&\E_{0,y} \sup_{t\leq \tau^\e } |Y^\e_t-y |^{2k}\lesssim \e^{2k}.\\
\ea
In particular, for any $\delta\in (0,1)$ and any $n\geq 1$
\ba
&\P\Big(  \sup_{t\leq \tau^\e } |V^\e_t-y|\geq \e^{1-\delta}  \Big)\lesssim \e^{\infty},\\
&\P\Big(  \sup_{t\leq \tau^\e } |Y^\e_t-y|\geq \e^{1-\delta}  \Big)\lesssim \e^{\infty}.
\ea

\end{lem}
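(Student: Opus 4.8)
The plan is to bound the moments of $V^\e$ directly from its It\^o representation \eqref{e:U}, then to transfer the bound to $Y^\e=\Psi^\e(U^\e,V^\e)$ via the inverse coordinate map, and finally to read off the probability estimates by Chebyshev's inequality with arbitrarily large exponents. \emph{Step 1 (moments of $V^\e$).} For $t\le\tau^\e$ the coefficients in \eqref{e:U} are evaluated at $(X^\e_s,Y^\e_s)$ with $X^\e_s\in[-a^\e_-,a^\e_+]\subseteq[-\widehat A\e,\widehat A\e]$, so their arguments lie in the strip $[-\widehat A\e,\widehat A\e]\times\bR^n$. By Assumptions \textbf{A} all the coefficients of $V^\e$ are bounded on this strip by a constant independent of (small) $\e$ and of $y$, except for the terms $\psi^i,\psi^i_l$, which by Remark \ref{r:small} are bounded there by $C\e$. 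Hence there is $K>0$, independent of $\e$ and $y$, such that the drift of $V^\e$ is bounded by $K$ in modulus while $\sum_{i,l}$ of its squared diffusion coefficients is bounded by $K$. Decomposing $V^\e_t-y=A^\e_t+M^\e_t$ into a finite-variation part with $|A^\e_t|\le Kt$ and a continuous martingale $M^\e$ with $\sum_i\langle M^{\e,i}\rangle_t\le Kt$, we get, for $t\le\tau^\e$, the bound $|V^\e_t-y|\le K\tau^\e+\sup_{s\le\tau^\e}|M^\e_s|$. Combining $(a+b)^{2k}\le 2^{2k-1}(a^{2k}+b^{2k})$, the Burkholder--Davis--Gundy inequality applied to the stopped martingale $(M^\e_{t\wedge\tau^\e})_{t\ge0}$, and the moment bounds $\E_{0,y}(\tau^\e)^j\lesssim\e^{2j}$ of Lemma \ref{l:tau}, together with $\e\le1$, we obtain
\ba
\E_{0,y}\sup_{t\le\tau^\e}|V^\e_t-y|^{2k}\lesssim\E_{0,y}(\tau^\e)^{2k}+\E_{0,y}(\tau^\e)^{k}\lesssim\e^{4k}+\e^{2k}\lesssim\e^{2k},
\ea
uniformly over $y\in\bR^n$.

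\emph{Step 2 (moments of $Y^\e$ and the probability bounds).} For $t\le\tau^\e$ one has $X^\e_t\in(-a^\e_-,a^\e_+)$, hence $|U^\e_t|\le\widehat A\e$ by \eqref{e:tauss} and $Y^\e_t=\Psi^\e(U^\e_t,V^\e_t)$ by \eqref{e:PP}. The first-order expansion \eqref{e:Psi} then yields
\ba
|Y^\e_t-y|\le|V^\e_t-y|+\|\theta\|_\infty|U^\e_t|+|h_2^\e(U^\e_t,V^\e_t)|\le|V^\e_t-y|+C\e,
\ea
so that taking $\sup_{t\le\tau^\e}$, raising to the power $2k$ and taking expectations reduces the bound for $Y^\e$ to that for $V^\e$ from Step 1. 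For the probability estimates, fix any $k\ge1$; Chebyshev's inequality and Step 1 give
\ba
\P\Big(\sup_{t\le\tau^\e}|V^\e_t-y|\ge\e^{1-\delta}\Big)\le\e^{-2k(1-\delta)}\,\E_{0,y}\sup_{t\le\tau^\e}|V^\e_t-y|^{2k}\lesssim\e^{2k\delta},
\ea
and likewise for $Y^\e$. Since $\delta>0$ is fixed and $k\ge1$ is arbitrary, both probabilities are $\lesssim\e^\infty$.

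The only point that needs care is the uniformity in $\e$ and $y$ of the constant $K$ in Step 1: the coefficients $\psi^i,\psi^i_l$ are not bounded globally, only of order $\e$ on the strip $[-\widehat A\e,\widehat A\e]\times\bR^n$, but by \eqref{eq:widehatA2} and the definition of $\tau^\e$ this is precisely where the frozen argument $(X^\e_{s\wedge\tau^\e},Y^\e_{s\wedge\tau^\e})$ lives for $s\le\tau^\e$, so Remark \ref{r:small} applies. Granting this uniformity, the rest is a routine combination of the Burkholder--Davis--Gundy inequality with the exit-time moment estimates of Lemma \ref{l:tau} and the Lipschitz-type expansion \eqref{e:Psi} of $\Psi^\e$.
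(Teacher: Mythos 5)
Your argument is correct and follows essentially the same route as the paper: bound the drift and diffusion coefficients of $V^\e$ on the strip via Assumptions \textbf{A} and Remark \ref{r:small}, control the martingale part with a maximal (BDG/Doob) inequality and the drift part pathwise, invoke the exit-time moment bounds of Lemma \ref{l:tau} to get $\E_{0,y}\sup_{t\le\tau^\e}|V^\e_t-y|^{2k}\lesssim\e^{2k}$, then transfer to $Y^\e$ via the expansion of $\Psi^\e$ in Lemma \ref{l:estPhiPsi} and finish with Chebyshev's inequality with arbitrarily large $k$. The only cosmetic difference is that the paper estimates the drift integral by H\"older with weight $(\widehat\tau^\e)^{2k-1}$ rather than by the pathwise bound $K\tau^\e$, which yields the same $\E[(\tau^\e)^k+(\tau^\e)^{2k}]$ control.
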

\begin{proof}
Recall that
\ba
V^{\e,i}_t=y^i
&+\int_0^t \Big( b^i -\theta^i b^0 - \sum_{j=1}^n  \theta^i_{y^j}\Sigma^{0j} + \psi^i\Big)(X_s^\e,Y_s^\e) \,\di s\\
&+\sum_{l=1}^m \int_0^t \Big(  \sigma_{l}^i - \theta^i \sigma_{l}^0 +\psi_{l}^{i}  \Big)(X_s^\e,Y_s^\e)\,\di W^l_s.
\ea
With the help of the Doob inequality and \eqref{e:UVXY}, we estimate for each $i=1,\dots,n$ and $k\geq 1$: 
\ba
\E_{0,y}&\sup_{t\leq \widehat\tau^\e} (V^{\e,i}_t-y^i)^{2k}\\
&\leq  (m+1)^{2k-1}\E_{0,y}\sup_{t\leq \widehat\tau^\e}
\Big|\int_0^t   \Big( b^i -\theta^i b^0 - \sum_{j=1}^n  \theta^i_{y^j}\Sigma^{0j} + \psi^{i}\Big)(X_s^\e,Y_s^\e)    \,\di s\Big|^{2k} \\
& + (m+1)^{2k-1} \sum_{l=1}^m \E_{0,y}\sup_{t\leq \widehat\tau^\e}
\Big|\int_0^t  \Big(  \sigma_{l}^i - \theta^i \sigma_{l}^0 +\psi_{l}^{i}  \Big)(X_s^\e,Y_s^\e)      \,\di W^l_s\Big|^{2k} 
\\
&\leq
C_1 \E\Big[ (\widehat\tau^\e)^{2k-1} \int_0^{\widehat\tau^\e}
\Big(( b^i -\theta^i b^0 - \sum_{j=1}^n  \theta^i_{y^j}\Sigma^{0j} + \psi^{i})(X_s^\e,Y_s^\e)   \Big)^{2k}\,\di s\Big]   \\
&+ 
C_2 \sum_{l=1}^m \E \Big[\int_0^{\widehat\tau^\e} \Big((\sigma_{l}^i - \theta^i \sigma_{l}^0 +\psi_{l}^{i} )(X_s^\e,Y_s^\e) \Big)^2\,\di s\Big]^k\\
&\leq C_3\E_{0,y}\Big[(\widehat\tau^\e)^k +(\widehat\tau^\e)^{2k}\Big] \leq C_4\e^{2k}.
\ea
By Markov's inequality, for any $k\geq 1$ and $\delta\in (0,1)$
\ba
\P_{0,y}\Big(  \sup_{t\leq \tau^\e } |V^\e_t-y|\geq \e^{1-\delta}  \Big)
&=\P_{0,y}\Big(  \sup_{t\leq \tau^\e }|V^\e_t-y|^{2k}\geq \e^{2k-2k\delta}  \Big)\\
&\leq \e^{-2k+2k \delta}\E_{0,y}\sup_{t\leq \tau^\e} |V^\e(t)-y|^{2k}
\leq C_5 \e^{2k\delta}.
\ea
Choosing $k$ large enough, the order of the r.h.s.\ can be made arbitrarily small.

The estimates for $Y^\e$ follow from Lemma \ref{l:estPhiPsi}.
\end{proof}

\subsection{Accurate estimates for $X^\e$\label{s:X}}

Recall \eqref{e:U} and decompose the process $U^\e$ into the sum
\ba
\label{e:RAM}
U^\e_t
&= R_t + A_t^\e+M_t^\e,
\ea
with
\begin{align}
R_t&=b^0(y) t+\sum_{l=1}^m\sigma_{l}^0(y) W^l_t, \\
\label{e:AAA}
A_t^\e
&= \int_0^t   \Big(  \partial_x b^0(s)X^\e_{s\wedge\tau^\e}
+ \sum_{i=1}^n \partial_{y^i}b^0(s) \cdot (Y^{\e,i}_{s\wedge\tau^\e}-y^i)
+ \phi^{\e,0}(X^\e_{s\wedge\tau^\e},Y^\e_{s\wedge\tau^\e})\Big) \,\di s,\\
\label{e:MMM}
M^\e_t
&=\sum_{l=1}^m\int_0^t   \Big( \partial_x \sigma_{l}^0(s) X^\e_{s\wedge\tau^\e}
+ \sum_{i=1}^n \partial_{y^i}\sigma_{l}^0(s)  (Y^{\e,i}_{s\wedge\tau^\e}-y^i) + \phi_{l}^{\e,0}(X^\e_{s\wedge\tau^\e},Y^\e_{s\wedge\tau^\e})
\Big)\,\di W^l_s ,
\end{align}
where the terms $\partial_x b^0(s)$, $\partial_{y^i}b^0(s)$, etc.\
are bounded by Assumption \textbf{A}.
In the representation \eqref{e:RAM}, the process $R$ is a Brownian motion with drift, and $A^\e$ and $M^\e$ are ``small'' on $t\in[0,\tau^\e]$ as shown in the
next Lemma.

\begin{lem}
\label{l:AM}
For $\e>0$ small enough we have 
\ba
\E_{0,y} \sup_{t\leq \tau^\e}|A^\e_{\tau^\e}| \lesssim \e^3,\\
\E_{0,y}\sup_{t\leq \tau^\e}|M^\e_{\tau^\e}| \lesssim \e^2.
\ea
\end{lem}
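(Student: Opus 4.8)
Both bounds will follow from pointwise estimates of the integrands in \eqref{e:AAA} and \eqref{e:MMM} on the interval $[0,\tau^\e]$, combined with the moment estimates already established in Lemmas \ref{l:tau} and \ref{l:V}. First I would record the uniform bounds valid on $\{s\le\tau^\e\}$: by \eqref{eq:widehatA2} one has $|X^\e_{s\wedge\tau^\e}|\le\widehat A\e$ and $(X^\e_{s\wedge\tau^\e},Y^\e_{s\wedge\tau^\e})\in[-\widehat A\e,\widehat A\e]\times\bR^n$; the coefficients $\partial_xb^0(s),\partial_{y^i}b^0(s),\partial_x\sigma^0_l(s),\partial_{y^i}\sigma^0_l(s)$ are bounded by Assumption \textbf{A}; and $|\phi^{\e,0}(X^\e_{s\wedge\tau^\e},Y^\e_{s\wedge\tau^\e})|,\ |\phi^{\e,0}_l(X^\e_{s\wedge\tau^\e},Y^\e_{s\wedge\tau^\e})|\le C\e$ by Remark \ref{r:small}. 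Hence on $s\le\tau^\e$ the integrands of both \eqref{e:AAA} and \eqref{e:MMM} are bounded in absolute value by $C\bigl(\e+|Y^\e_s-y|\bigr)\le C\bigl(\e+\sup_{u\le\tau^\e}|Y^\e_u-y|\bigr)$.

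For $A^\e$: since $|A^\e_t|\le\int_0^{\tau^\e}\bigl|(\cdots)(X^\e_{s\wedge\tau^\e},Y^\e_{s\wedge\tau^\e})\bigr|\,\di s$ for every $t\le\tau^\e$, the displayed pointwise bound gives
\[
\sup_{t\le\tau^\e}|A^\e_t|\le C\e\,\tau^\e+C\,\tau^\e\sup_{u\le\tau^\e}|Y^\e_u-y|.
\]
Taking $\E_{0,y}$, the first term is $\lesssim\e\,\E_{0,y}\tau^\e\lesssim\e^3$ by Lemma \ref{l:tau}, and by Cauchy--Schwarz the second term is bounded by $C\bigl(\E_{0,y}(\tau^\e)^2\bigr)^{1/2}\bigl(\E_{0,y}\sup_{u\le\tau^\e}|Y^\e_u-y|^2\bigr)^{1/2}\lesssim(\e^4)^{1/2}(\e^2)^{1/2}=\e^3$, again by Lemmas \ref{l:tau} and \ref{l:V}. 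This yields $\E_{0,y}\sup_{t\le\tau^\e}|A^\e_t|\lesssim\e^3$.

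For $M^\e$: I would apply Doob's $L^2$ maximal inequality to the stopped continuous martingale $M^\e_{\cdot\wedge\tau^\e}$, whose bracket is $\langle M^\e\rangle_{\tau^\e}$, to get $\E_{0,y}\sup_{t\le\tau^\e}|M^\e_t|^2\le 4\,\E_{0,y}\langle M^\e\rangle_{\tau^\e}$. The pointwise bound on the integrand of \eqref{e:MMM} gives
\[
\langle M^\e\rangle_{\tau^\e}\le C\int_0^{\tau^\e}\bigl(\e^2+|Y^\e_s-y|^2\bigr)\,\di s\le C\e^2\tau^\e+C\,\tau^\e\sup_{u\le\tau^\e}|Y^\e_u-y|^2,
\]
and after taking expectations the first term is $\lesssim\e^2\cdot\e^2=\e^4$, while by Cauchy--Schwarz the second is $\lesssim\bigl(\E_{0,y}(\tau^\e)^2\bigr)^{1/2}\bigl(\E_{0,y}\sup_{u\le\tau^\e}|Y^\e_u-y|^4\bigr)^{1/2}\lesssim(\e^4)^{1/2}(\e^4)^{1/2}=\e^4$, using Lemma \ref{l:V} with $k=2$. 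Therefore $\E_{0,y}\sup_{t\le\tau^\e}|M^\e_t|\le\bigl(\E_{0,y}\sup_{t\le\tau^\e}|M^\e_t|^2\bigr)^{1/2}\lesssim\e^2$.

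There is no real obstacle; the argument is bookkeeping, and the only points needing a little care are invoking Remark \ref{r:small} with the correct $\mathcal O(\e)$ size of $\phi^{\e,0},\phi^{\e,0}_l$ on the $\mathcal O(\e)$-wide strip, and choosing the Hölder exponents so that the exit-time moment bound $\E_{0,y}(\tau^\e)^2\lesssim\e^4$ of Lemma \ref{l:tau} pairs with the displacement bounds of Lemma \ref{l:V} to produce the powers $\e^3$ and $\e^2$. All analytic inputs are already available.
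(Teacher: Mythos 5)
Your proposal is correct and follows essentially the same route as the paper's proof: pointwise bounds on the integrands via Assumption \textbf{A} and Remark \ref{r:small}, then Cauchy--Schwarz combining $\E_{0,y}(\tau^\e)^2\lesssim\e^4$ from Lemma \ref{l:tau} with the displacement moments of Lemma \ref{l:V} for the drift part, and Doob's $L^2$ maximal inequality plus the quadratic variation bound and a final square root for the martingale part. No gaps.
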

\begin{proof}
To obtain these estimates, we use Remark \ref{r:small}, Lemmas \ref{l:tau} and \ref{l:V} as well as the Cauchy--Schwarz inequality:
\ba
\E_{0,y} \sup_{t\leq \tau^\e}|A^\e_{\tau^\e}| 
&\leq \E_{0,y}\int_0^{\tau^\e}  
\Big(  |\partial_x b^0(s)X^\e_s|
+ \sum_{i=1}^n |\partial_{y^i}b^0(s) \cdot (Y^{\e,i}_s-y^i)|
+ |\phi^{\e,0}(X^\e_s,Y^\e_s)|\Big) \,\di s\\
&\leq C\E_{0,y} \tau^\e\Big( \e + \sup_{t\leq \tau^\e}|Y^{\e}_{t}-y|\Big)\\
&\leq C\e \E_{0,y} \tau^\e  + \Big( \E_{0,y} |\tau^\e|^2\cdot \E_{0,y}\sup_{t\leq \tau^\e}|Y^{\e}_{t}-y|^2 \Big)^{1/2} \lesssim \e^3. 
\ea
To estimate the martingale $M^\e$ we apply the Doob inequality:
\ba
\E_{0,y}\sup_{t\leq \tau^\e}|M^\e_{\tau^\e}|^2
&\leq \sum_{l=1}^m \E_{0,y}\int_0^{\tau^\e} \Big( \partial_x
\sigma_{l}^0(s) X^\e_s
+ \sum_{i=1}^n
\partial_{y^i}\sigma_{l}^0(s)\cdot  (Y^{\e,i}_s-y^i) +
\phi_{l}^{\e,0}(X^\e_{s\wedge\tau^\e},Y^\e_s)
\Big)^2\,\di s\\
&\leq C\e^2 \E_{0,y}\tau^\e +  \Big( \E_{0,y} |\tau^\e|^2\cdot \E_{0,y}\sup_{t\leq \tau^\e}|Y^{\e}_{t}-y|^4 \Big)^{1/2} \lesssim \e^4. 
\ea
\end{proof}

We will prove that the exit probabilities of $X^\e$ from $(-a_-^\e,a_+^\e)$ coincide
with the exit probabilities of the Brownian motion with drift
$R$ from the modified interval $(-a_-^\e,a_+^\e(1-2\e\beta(y)))$ up to the terms of the order $\mathcal O(\e^{2-\delta})$
and calculate the moments of $X^\e_{\tau^\e}$ with accuracy of the order $\mathcal O(\e^{3-\delta})$. Note that the local time terms 
that disappeared upon transition to the process $U^\e$
contribute now to the asymmetry of the exit interval. 

\begin{lem}
\label{l:asympt}
For any $\delta\in(0,1)$ and for $\e>0$ small enough we have
\begin{align}
\label{e:p}
p_\pm^\e:=p_\pm^\e(y)&= \P_{0,y}\Big(X^\e_{\tau^\e}=\pm a^\e_\pm\Big)
= \frac{1}{2}\pm \frac12 \Big(\frac{b^0(y)d}{\Sigma^{00}(y)} + \beta(y)  -\frac{d'}{2d}\Big)\e+\mathcal O(\e^{2-\delta}),\\
\label{e:EX}
\E_{0,y} X^\e_{\tau^\e} &= \Big( \beta(y)d  + \frac{b^0(y)d^2}{\Sigma^{00}(y)}   \Big) \e^2      +\mathcal{O}(\e^{3-\delta}),\\
\label{e:EX2}
\E_{0,y} \left(X^\e_{\tau^\e}\right)^2 &= d^2 \e^2      +\mathcal{O}(\e^{3-\delta}),\\
\label{e:tau}
\E_{0,y}\tau^\e&=  \frac{d^2}{\Sigma^{00}(y)}\e^2+  \mathcal{O}(\e^{3}).
\end{align}
\end{lem}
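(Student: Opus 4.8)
The plan is to freeze the coefficients at the origin and to read off all four quantities from the exit problem of the Brownian motion with drift $R$ from \eqref{e:RAM}, which has constant drift $b^0(y)$ and $\langle R\rangle_t=\Sigma^{00}(y)t$. The ingredients will be Lemma~\ref{l:estPhiPsi}, Lemma~\ref{l:tau} ($\E_{0,y}(\tau^\e)^k\lesssim\e^{2k}$), Lemma~\ref{l:V} ($\E_{0,y}\sup_{t\le\tau^\e}|Y^\e_t-y|^{2}\lesssim\e^{2}$, and the same for $V^\e$), Remark~\ref{r:small} ($|\phi^{\e,0}|,|\phi^{\e,0}_l|\lesssim\e$ on the strip), the expansions \eqref{e:a}, and the uniform boundedness and ellipticity of the coefficients. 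I would first note that the exit is attained exactly: by continuity $X^\e_{\tau^\e}\in\{-a^\e_-,a^\e_+\}$; since $F^\e$, hence $\Phi^\e$, preserves the sign of the first coordinate, $\{X^\e_{\tau^\e}=-a^\e_-\}=\{U^\e_{\tau^\e}=-a^\e_-\}$; and on $\{X^\e_{\tau^\e}=a^\e_+\}$ the relation $a^\e_+=U^\e_{\tau^\e}(1+2\e\beta(V^\e_{\tau^\e}))+h^\e_1(U^\e_{\tau^\e},V^\e_{\tau^\e})$ with $|h^\e_1|\lesssim\e^3$ (Lemma~\ref{l:estPhiPsi}) gives, after inverting the affine factor,
\[
U^\e_{\tau^\e}=a^\e_+\bigl(1-2\e\beta(V^\e_{\tau^\e})\bigr)+\mathcal O(\e^3).
\]

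For the exit probabilities \eqref{e:p} I would use the normalised scale function of $R$, $s(u)=s_y(u):=\int_0^u\ex^{-\lambda v}\,\di v$ with $\lambda:=2b^0(y)/\Sigma^{00}(y)$, which is $R$-harmonic, $\tfrac12\Sigma^{00}(y)s''+b^0(y)s'\equiv0$, satisfies $s(0)=0$, and has $|\lambda|,\|s'\|,\|s''\|$ bounded uniformly in $y$ on strips $|u|\lesssim\e$. The crucial point is \emph{not} to compare the trajectories of $U^\e$ and $R$ (that loses a factor $\e^{-1}$, since the exit interval has width of order $\e$ whereas $A^\e+M^\e$ is only $\mathcal O(\e^2)$), but to apply It\^o's formula to $s(U^\e_t)$ itself: using \eqref{e:U} and $R$-harmonicity, the drift of $s(U^\e_t)$ equals
\[
s'(U^\e_t)\bigl[(b^0+\phi^{\e,0})(X^\e_t,Y^\e_t)-b^0(y)\bigr]+\tfrac12 s''(U^\e_t)\Bigl[\sum_{l=1}^m(\sigma^0_l+\phi^{\e,0}_l)^2(X^\e_t,Y^\e_t)-\Sigma^{00}(y)\Bigr],
\]
which for $t\le\tau^\e$ is $\mathcal O(|X^\e_t|+|Y^\e_t-y|+\e)$ by $\mathbf{A}_{\mathrm{coeff}}$, Remark~\ref{r:small}, and the bounds on $s',s''$. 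As the stochastic integral is a true martingale (its integrand is bounded on $[0,\tau^\e]$ and $\E_{0,y}\tau^\e<\infty$), with $|X^\e_t|\le\widehat A\e$, Cauchy--Schwarz and Lemmas~\ref{l:tau} and~\ref{l:V} one obtains
\begin{align*}
\bigl|\E_{0,y}s(U^\e_{\tau^\e})-s(0)\bigr|
&\le C\,\E_{0,y}\!\int_0^{\tau^\e}\!\bigl(|X^\e_t|+|Y^\e_t-y|+\e\bigr)\,\di t\\
&\lesssim \e\,\E_{0,y}\tau^\e+\bigl(\E_{0,y}(\tau^\e)^2\bigr)^{1/2}\bigl(\E_{0,y}\sup_{t\le\tau^\e}|Y^\e_t-y|^2\bigr)^{1/2}\lesssim\e^3 .
\end{align*}
On $\{X^\e_{\tau^\e}=a^\e_+\}$ the value of $U^\e_{\tau^\e}$ above and $s\in C^1$ yield $s(U^\e_{\tau^\e})=s\bigl(a^\e_+(1-2\e\beta(y))\bigr)+\mathcal O(\e^3)+\mathcal O(\e^2|V^\e_{\tau^\e}-y|)$, whereas $s(U^\e_{\tau^\e})=s(-a^\e_-)$ on $\{X^\e_{\tau^\e}=-a^\e_-\}$; hence, using $\E_{0,y}|V^\e_{\tau^\e}-y|\lesssim\e$ (Lemma~\ref{l:V}),
\[
s(0)=s\bigl(a^\e_+(1-2\e\beta(y))\bigr)\,p^\e_++s(-a^\e_-)\,(1-p^\e_+)+\mathcal O(\e^3).
\]
Since the denominator $s\bigl(a^\e_+(1-2\e\beta(y))\bigr)-s(-a^\e_-)=\int_{-a^\e_-}^{a^\e_+(1-2\e\beta(y))}\ex^{-\lambda v}\,\di v$ is $\gtrsim\e$ uniformly in $y$,
\[
p^\e_+=\frac{-\,s(-a^\e_-)}{s\bigl(a^\e_+(1-2\e\beta(y))\bigr)-s(-a^\e_-)}+\mathcal O(\e^{2-\delta}),
\]
and substituting \eqref{e:a} and expanding the two integrals in powers of $\e$ gives \eqref{e:p}; finally $p^\e_-=1-p^\e_+$.

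The remaining estimates follow. From $X^\e_{\tau^\e}\in\{-a^\e_-,a^\e_+\}$,
\[
\E_{0,y}X^\e_{\tau^\e}=(a^\e_++a^\e_-)p^\e_+-a^\e_-,\qquad
\E_{0,y}(X^\e_{\tau^\e})^2=(a^\e_-)^2+\bigl((a^\e_+)^2-(a^\e_-)^2\bigr)p^\e_+,
\]
so inserting \eqref{e:a} and \eqref{e:p} gives \eqref{e:EX} (the error being $\mathcal O(\e)\cdot\mathcal O(\e^{2-\delta})=\mathcal O(\e^{3-\delta})$); for the second moment $(a^\e_+)^2-(a^\e_-)^2=\mathcal O(\e^3)$ and $p^\e_+=\tfrac12+\mathcal O(\e)$ force the $\e^3$-terms to cancel, leaving $\E_{0,y}(X^\e_{\tau^\e})^2=d^2\e^2+\mathcal O(\e^3)$, which contains \eqref{e:EX2}. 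For \eqref{e:tau} I would apply It\^o's formula to $(X^\e_t)^2$; the local time term drops out, $\int_0^t 2X^\e_s\e\beta(Y^\e_s)\,\di L^0_s(X^\e)=0$, because $L^0(X^\e)$ increases only on $\{X^\e_s=0\}$, so that
\[
\E_{0,y}(X^\e_{\tau^\e})^2=2\,\E_{0,y}\!\int_0^{\tau^\e}\!X^\e_s b^0(X^\e_s,Y^\e_s)\,\di s+\E_{0,y}\!\int_0^{\tau^\e}\!\Sigma^{00}(X^\e_s,Y^\e_s)\,\di s .
\]
On $[0,\tau^\e]$ one has $|X^\e_s|\le\widehat A\e$, so the first term is $\mathcal O(\e\,\E_{0,y}\tau^\e)=\mathcal O(\e^3)$, while writing $\Sigma^{00}(X^\e_s,Y^\e_s)=\Sigma^{00}(y)+\mathcal O(|X^\e_s|+|Y^\e_s-y|)$ the remaining error integrates (exactly as in the drift bound above) to $\mathcal O(\e^3)$; hence $\Sigma^{00}(y)\,\E_{0,y}\tau^\e=d^2\e^2+\mathcal O(\e^3)$, and dividing by $\Sigma^{00}(y)$, which is bounded away from $0$ and $\infty$ uniformly in $y$ by $\mathbf{A}_\Sigma$ and $\mathbf{A}_{\mathrm{coeff}}$, gives \eqref{e:tau}. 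All constants depend only on the uniform bounds on the coefficients, on $\inf d$ and $\sup d$, and on the constants of Lemmas~\ref{l:tau} and~\ref{l:V} and Remark~\ref{r:small}, so every estimate is uniform in $y\in\bR^n$. The only delicate step is the power of $\e$ in \eqref{e:p}, which is why the frozen scale function must be evaluated along $U^\e$ and not along $R$.
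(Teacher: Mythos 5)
Your argument is correct, and it reaches all four asymptotics by a genuinely different route than the paper. The paper also freezes coefficients at $(0,y)$, but it then works with the comparison process $R$ itself: it computes $\E_{0,y}U^\e_{\tau^\e}$ twice, once through the terminal values (as you do) and once through the exit of $R$ from the tilted interval $(-a^\e_-,a^\e_+(1-2\e\beta(y)))$ using the explicit Borodin--Salminen formulas for Brownian motion with drift, and it must therefore control the mismatch of the two exit times, $\E_{0,y}|\tau^\e-\rho^\e|\lesssim\e^3$ (Lemma \ref{l:taurho}), which is the most delicate part of its proof; \eqref{e:tau} is then read off from the explicit formula for $\E\rho^\e$ plus the same coupling lemma. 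You avoid the comparison exit time altogether: optional stopping of the frozen scale function $s(U^\e_t)$ puts the entire frozen-coefficient information into one harmonic identity whose drift mismatch integrates to $\mathcal O(\e^3)$ by Remark \ref{r:small}, Lemma \ref{l:tau} and Lemma \ref{l:V}, which yields \eqref{e:p} directly (in fact with error $\mathcal O(\e^2)$, better than claimed); \eqref{e:EX} and \eqref{e:EX2} then follow purely algebraically from the two-point law of $X^\e_{\tau^\e}$, and \eqref{e:tau} comes from the It\^o identity for $(X^\e_t)^2$, where you correctly note that $X^\e_s\,\di L^0_s(X^\e)=0$ kills the local-time term, so that $\Sigma^{00}(y)\,\E_{0,y}\tau^\e=\E_{0,y}(X^\e_{\tau^\e})^2+\mathcal O(\e^3)$. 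What each approach buys: yours dispenses with Lemma \ref{l:taurho} and with any explicit exit formulas, so it is more elementary and self-contained, and your observation that one must evaluate the frozen harmonic function along $U^\e$ rather than compare trajectories with $R$ is exactly the right way to avoid the $\e^{-1}$ loss (the paper's Lemma \ref{l:taurho} is its own workaround for the same difficulty); the paper's route, at the price of that lemma, transfers any explicit functional of the frozen exit problem (probabilities, mean exit time, and potentially higher moments) in one stroke. The only points you leave implicit --- the optional stopping justifications for $s(U^\e)$ and $(X^\e)^2$ --- are standard given the bounded integrands on $[0,\tau^\e]$ and $\E_{0,y}\tau^\e<\infty$ from Lemma \ref{l:tau}, and your uniformity-in-$y$ claims hold since $\lambda=2b^0(y)/\Sigma^{00}(y)$ is uniformly bounded by $\mathbf{A}_{\mathrm{coeff}}$ and $\mathbf{A}_\Sigma$.
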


\begin{proof}
1. On the one hand we have
\ba
\E_{0,y} U^\e_{\tau^\e}
&=-a_-^\e p_-^\e  + a_+^\e\E_{0,y}[B^\e(Y^\e_{\tau^\e})\bI(X_{\tau^\e}^\e=a_+^\e) ].
\ea
The Taylor formula for the mapping $(\e,Y)\mapsto B^\e(Y)$ yields
\ba
B^\e(Y)
=1 -2\e \beta(y) + r(\e, Y-y ), 
\ea
where the remainder is estimated by 
\ba
|r(\e,Y-y)|\leq C(\e^2+ \e |Y-y|+ \e|Y-y|^2 )
\ea
for some $C>0$.
Hence we get
\ba
\E_{0,y}\Big[B^\e(Y^\e_{\tau^\e})\bI(X_{\tau^\e}^\e=a_+^\e) \Big]&= p_+^\e(1-2\e\beta(y))
+ \E_{0,y}\Big[ r(\e,Y^\e_{\tau^\e}-y) \bI(X_{\tau^\e}^\e=a_+^\e)  \Big]
\ea
and for $\delta\in (0,1)$ with the help of Lemma \ref{l:V} we estimate
\ba
\Big|\E_{0,y}\Big[ r(\e,Y^\e_{\tau^\e}-y) \bI(X_{\tau^\e}^\e=a_+^\e)  \Big] \Big|
&\leq \E_{0,y} |r(\e,Y^\e_{\tau^\e}-y)| 
=\E_{0,y}\Big[ |r(\e,Y^\e_{\tau^\e}-y)|  \bI(|Y^\e_{\tau^\e}-y|< \e^{1-\delta}) \Big]  \\
&+\E_{0,y}\Big[ |r(\e,Y^\e_{\tau^\e}-y)|  \bI(|Y^\e_{\tau^\e}-y|\geq \e^{1-\delta}) \Big]  
\\
&\lesssim  C\e ( \e^2 + \e^{2-\delta} + \e^{3-2\delta})\\
&+     \e\E_{0,y}\Big[ |Y^\e_{\tau^\e}-y| \bI(|Y^\e_{\tau^\e}-y|\geq \e^{1-\delta}) \Big]
+     \e\E_{0,y}\Big[ |Y^\e_{\tau^\e}-y|^2 \bI(|Y^\e_{\tau^\e}-y|\geq \e^{1-\delta}) \Big] \\
&\lesssim   \e^{3-\delta}  + \mathcal{O}(\e^\infty) \lesssim  \e^{3-\delta}.
\ea
Hence using \eqref{e:a} we get
\ba
\E_{0,y} U^\e_{\tau^\e}
&=-a_-^\e p_-^\e  + a_+^\e p_+^\e\Big(1-2\e\beta(y)\Big) + \mathcal O(\e^{3-\delta}) \\
&=-(1-p_+^\e) \Big(d\e -\frac{d'\e^2}{2}+\mathcal{O}(\e^3) \Big) + p_+^\e \Big(d\e +\frac{d'\e^2}{2}+\mathcal{O}(\e^3) \Big) \Big(1-2\e\beta(y) \Big)
+ \mathcal O(\e^{3-\delta})\\
&=2 p_+ d\e  -d\e +\frac{d'\e^2}{2}   + \mathcal O(\e^{3-\delta}).
\ea
2. On the other hand, taking into account \eqref{e:RAM} and \eqref{e:AAA} we estimate 
\ba
\label{e:EU}
\E_{0,y}U_{\tau^\e}^\e
&=\E_{0,y}  R_{\tau^\e} + \E_{0,y} A^\e_{\tau^\e}\\
&=\E_{0,y}  R_{\rho^\e} +\Big(\E_{0,y}  R_{\tau^\e} - \E_{0,y}  R_{\rho^\e}\Big)+ \mathcal O(\e^3),
\ea
where
\ba
\rho^\e:=\inf\Big\{t\geq 0\colon R_t\notin (-a_-^\e,a_+^\e(1-2\e\beta(y)))\Big\}.
\ea
To calculate $\E_{0,y} R_{\rho^\e}$,
use the explicit formula 3.0.4 (a) on p.\ 309 from Borodin and Salminen \cite{BorSal02}:
\ba
\P_{0,y}(R_{\rho^\e}=a_-^\e)&=\ex^{-\frac{b^0(y)a_-^\e}{\Sigma^{00}(y)}}
\frac{\sinh\Big(a_+^\e(1-2\e\beta(y))\frac{|b^0(y)|}{\Sigma^{00}(y)}\Big)}{\sinh\Big((a_+^\e(1-2\e\beta(y))+a_-^\e)\frac{|b^0(y)|}{\Sigma^{00}(y)}\Big)}\\
&=\frac{1}{2}+ \frac12 \Big(-\frac{b^0(y)d }{\Sigma^{00}(y)} - \beta(y)  +\frac{d' }{2d }\Big)\e+\mathcal O(\e^2)
\ea
that gives us the asymptotics
\ba
\label{e:ER}
\E_{0,y} R_{\rho^\e}&=-a_-^\e \P_{0,y}(R_{\rho^\e}=a_-^\e)  + a_+^\e(1-2\e\beta(y))\P_{0,y}(R_{\rho^\e}=a_+^\e(1-2\e\beta(y)))\\
&=\frac{b^0(y)d^2}{\Sigma^{00}(y)}\e^2 + \mathcal O(\e^3).
\ea
The mean value $\E_{0,y} \rho^\e$ is obtained with the help of the formula 3.0.1 on p.\ 309 from Borodin and Salminen \cite{BorSal02}:
\ba
\label{e:Erho}
\E_{0,y} \rho^\e&=\frac{d^2}{\Sigma^{00}(y)}\e^2+ \mathcal O(\e^3).
\ea
Note that
\ba
|\E_{0,y} R_{\tau^\e} - \E_{0,y} R_{\rho^\e}   |\leq \|b\|_\infty\E_{0,y}|\tau^\e-\rho^\e|.
\ea
Hence by Lemma \ref{l:taurho} (below) we get the estimate
\ba
|\E_{0,y} R_{\tau^\e} - \E_{0,y} R_{\rho^\e}   | \lesssim \e^3.
\ea
Combining \eqref{e:EU} and \eqref{e:ER} we get \eqref{e:p} and hence immediately  \eqref{e:EX} and \eqref{e:EX2}.
The asymptotics \eqref{e:tau} follows from \eqref{e:Erho} and Lemma \ref{l:taurho}.
\end{proof}

\begin{lem}
\label{l:taurho}
For $\e>0$ small enough we have
\ba
\label{eq:ineq_diff_exits}
\E_{0,y}|\tau^\e-\rho^{\e}|\lesssim \e^3.
\ea
\end{lem}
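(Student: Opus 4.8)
The plan is to compare the two exit times through their first occurrence $\nu:=\tau^\e\wedge\rho^\e$ and estimate the residual time on each side. One writes
\ba
\E_{0,y}|\tau^\e-\rho^\e|=\E_{0,y}\big[(\tau^\e-\nu)\bI(\rho^\e<\tau^\e)\big]+\E_{0,y}\big[(\rho^\e-\nu)\bI(\tau^\e<\rho^\e)\big].
\ea
The geometric input is that at the meeting time $\nu$ the process which has \emph{not yet} exited starts its residual excursion very close to the relevant boundary. On $\{\rho^\e<\tau^\e\}$ we have $R_\nu\in\partial I_R^\e$ with $I_R^\e=(-a^\e_-,a^\e_+(1-2\e\beta(y)))$; since $U^\e=R+A^\e+M^\e$ by \eqref{e:RAM}, since $\Phi^\e$ is Lipschitz, and since $|a^\e_+(1-2\e\beta(y))(1+2\e\beta(v))-a^\e_+|\lesssim\e^2$ uniformly in $v$ (boundedness of $\beta$ and $|h^\e_1|\lesssim\e^3$, cf.\ \eqref{e:Phi}), the start point $X^\e_\nu=\Phi^\e(U^\e_\nu,V^\e_\nu)$ of the residual excursion lies within $\lesssim|A^\e_\nu|+|M^\e_\nu|+\e^2$ of $\{-a^\e_-,a^\e_+\}$. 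Symmetrically, on $\{\tau^\e<\rho^\e\}$, using $F^\e(-a^\e_-,\cdot)=-a^\e_-$, $F^\e(a^\e_+,\cdot)=a^\e_+B^\e(\cdot)$ and $|a^\e_+B^\e(y')-a^\e_+(1-2\e\beta(y))|\lesssim\e^2$, the point $R_\nu=U^\e_\nu-A^\e_\nu-M^\e_\nu$ lies within $\lesssim|A^\e_\nu|+|M^\e_\nu|+\e^2$ of $\partial I_R^\e$. By Lemma \ref{l:AM} and its proof, $\E_{0,y}(|A^\e_\nu|+|M^\e_\nu|)\le\E_{0,y}\sup_{t\le\tau^\e}|A^\e_t|+(\E_{0,y}\sup_{t\le\tau^\e}|M^\e_t|^2)^{1/2}\lesssim\e^2$, so the expected distance to the boundary at $\nu$ is $\lesssim\e^2$ in both cases.

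The second ingredient is a Lyapunov/Dynkin bound: a diffusion confined to an interval of width $\asymp\e$ and started at distance $r$ from the boundary exits in expected time $\lesssim\e r$. For the parabola $g(x)=(x-\ell_1)(\ell_2-x)\ge0$ on $[\ell_1,\ell_2]$ one has $|g'|\lesssim\ell_2-\ell_1$ and $g''=-2$, so It\^o's formula — along the lines of the proof of Lemma \ref{l:tau}, using uniform positive definiteness of $\Sigma$ and boundedness of the drifts on the strip — yields $\E_{x,y'}[\text{exit from }(\ell_1,\ell_2)]\lesssim(\ell_2-\ell_1)\,\mathrm{dist}(x,\{\ell_1,\ell_2\})\lesssim\e\,\mathrm{dist}(x,\{\ell_1,\ell_2\})$ for small $\e$. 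For $R$ this is immediate with $(\ell_1,\ell_2)=I_R^\e$. For $X^\e$, whose equation \eqref{e:XY0} carries $\e\beta(Y^\e)\,\di L^0(X^\e)$, the only extra term is $\E_{x,y'}\int_0^{\tau^\e}g'(X^\e_s)\e\beta(Y^\e_s)\,\di L^0_s(X^\e)$, which is negligible: $\di L^0(X^\e)$ lives on $\{X^\e=0\}$ where $|g'(0)|=|a^\e_+-a^\e_-|\lesssim\e^2$ by \eqref{e:a}, while $\E_{x,y'}L^0_{\tau^\e}(X^\e)\lesssim\e$ by Tanaka's formula and the a priori bound $\sup\E\tau^\e\lesssim\e^2$ (Lemma \ref{l:tau}), so this term is $\lesssim\e^4$. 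Hence $\E_{x,y'}\tau^\e\lesssim\e\,\mathrm{dist}(x,\{-a^\e_-,a^\e_+\})+\e^4$, uniformly over initial $(x,y')$ with $x\in(-a^\e_-,a^\e_+)$.

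Combining: on the $\rF_\nu$-measurable set $\{\rho^\e<\tau^\e\}$ the strong Markov property of $(X^\e,Y^\e)$ (Theorem \ref{t:exist}) gives $\E_{0,y}[(\tau^\e-\nu)\mid\rF_\nu]=\E_{x,y'}\tau^\e\big|_{(x,y')=(X^\e_\nu,Y^\e_\nu)}\lesssim\e\,\mathrm{dist}(X^\e_\nu,\{-a^\e_-,a^\e_+\})+\e^4$, so that
\ba
\E_{0,y}\big[(\tau^\e-\nu)\bI(\rho^\e<\tau^\e)\big]\lesssim\e\,\E_{0,y}\big(|A^\e_\nu|+|M^\e_\nu|+\e^2\big)+\e^4\lesssim\e^3;
\ea
on $\{\tau^\e<\rho^\e\}$ the strong Markov property of the Brownian motion with drift $R$ gives in the same way $\E_{0,y}[(\rho^\e-\nu)\bI(\tau^\e<\rho^\e)]\lesssim\e\,\E_{0,y}(|A^\e_\nu|+|M^\e_\nu|+\e^2)\lesssim\e^3$. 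Adding the two estimates yields \eqref{eq:ineq_diff_exits}.

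The main obstacle I expect is precisely that the object to be controlled is $\E|\tau^\e-\rho^\e|$ and not $|\E\tau^\e-\E\rho^\e|$ (the latter would follow from a one-line Dynkin identity), so an honest pathwise comparison of the two exits is unavoidable and it must be carried out on an interval whose width is itself only $\asymp\e$; this forces one to control the position of the still-running process relative to its boundary \emph{in $L^1$ and to order $\e^2$} at the meeting time $\nu$, which is exactly the order of $|A^\e_\nu|+|M^\e_\nu|$ furnished by Lemma \ref{l:AM}, leaving no slack. A secondary technical nuisance is that $\tau^\e$ is not the exit time of $U^\e$ from a fixed interval — in the $U$-coordinate the right edge of the strip is the graph $\{u=\psi^\e(v)\}$ with $\psi^\e(v)=a^\e_+/(1+2\e\beta(v))+\mathcal O(\e^3)$ — which is why the residual-time Lyapunov estimate has to be run for $X^\e$ itself and the local-time term in \eqref{e:XY0} dealt with, harmlessly, as above.
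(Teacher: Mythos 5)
Your proof is correct, and its skeleton coincides with the paper's: split $\E_{0,y}|\tau^\e-\rho^\e|$ according to which exit occurs first, observe that at the swap time $\nu=\tau^\e\wedge\rho^\e$ the still-running process is within $O(|A^\e_\nu|+|M^\e_\nu|+\e^2)$ of its own exit boundary (with $\E(|A^\e_\nu|+|M^\e_\nu|)\lesssim\e^2$ from Lemma \ref{l:AM}), and convert that proximity into an expected residual time of order $\e\cdot\mathrm{dist}$ by a quadratic Lyapunov/Dynkin bound on an interval of width $\asymp\e$. The packaging differs: the paper never leaves the local-time-free coordinate — it applies It\^o's formula to the parabola $h(u)=(\widetilde a^\e_+-u)(u+a^\e_-)$ along $U^\e$ on $[\rho^\e,\tau^\e]$ (resp.\ along $R$ on $[\tau^\e,\rho^\e]$) conditionally on the $\sigma$-field at the earlier time, so neither a Markov restart nor the local time ever appears — whereas you restart at $\nu$ via the strong Markov property and run the Dynkin estimate for $X^\e$ itself, which forces you to absorb the term $\int g'(X^\e_s)\,\e\beta(Y^\e_s)\,\di L^0_s(X^\e)$; your treatment (support of $\di L^0$ at $0$, $|g'(0)|=|a^\e_+-a^\e_-|\lesssim\e^2$ by \eqref{e:a}, and $\E L^0_{\tau^\e}\lesssim\e$ via Tanaka) is sound, and your uniform $O(\e^2)$ bound on the boundary mismatch, though cruder than the paper's $\e^{4-\delta}$ bound on $\E|h(U^\e_{\tau^\e})|$, suffices because it is multiplied by the interval width $\e$. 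Two small points deserve an explicit line: the bound $\E\tau^\e\lesssim\e^2$ of Lemma \ref{l:tau} is stated for the start $x=0$, so you need its (immediate) extension to interior starting points, and the strong Markov property must be invoked with respect to a filtration in which $\rho^\e$ — a stopping time of the driving Brownian motion — is indeed a stopping time; both are harmless given weak uniqueness, but they are exactly the technicalities the paper's conditional-It\^o formulation avoids.
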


\begin{proof}
1.
Let $\widetilde a_+^\e=a_+^\e(1-2\e\beta(y))$ and
\ba
h(u)=(\widetilde a_+^\e-u)(u+a_-^\e),
\ea
so that $h'(u)=-2 u + \widetilde a_+^\e-a_-^\e$, $h''(u)=-2$.
The It\^o formula on the event $\{\rho^\e\leq\tau^\e\}$
yields
\ba
\E_{0,y}\Big[ h(U_{\tau^\e}^\e)\Big|\rF_{\rho^\e}\Big]&=
h(U^\e_{\rho^\e})
+\E_{0,y}\Big[\int_{\rho^\e}^{\tau^\e}(-2 U_s^\e + a_+^\e-a_-
^\e)  (b^0(X^\e_s,Y^\e_s)+\phi^{\e,0}(X^\e_s,Y^\e_s))\,\di
s\Big|\rF_{\rho^\e}\Big]\\
&-  \sum_{l=1}^m \E_{0,y}\Big[\int_{\rho^\e}^{\tau^\e} \Big(
\sigma_{l}^0(X^\e_s,Y^\e_s)
+ \phi_{l}^{\e,0}(X^\e_s,Y^\e_s)\Big)^2\,\di s\Big|\rF_{\rho^\e}\Big]\\
&\leq h(U^\e_{\rho^\e})
+C_1\e \E_{0,y}\Big[\tau^\e
-\rho^\e\Big|\rF_{\rho^\e}\Big]-  \delta_1\E_{0,y}\Big[\tau^\e-
\rho^\e\Big|\rF_{\rho^\e}\Big]
\ea
where we used that $\sum_{l=1}^m \sigma_{l}^0(x,y)^2\geq \delta_1>0$,
$|b^0(x,y)+\phi^{\e,0}(x,y)|\leq \delta_2$,
and $|U^\e_t|\leq C\e$, for some $C>0$ fixed and $\e>0$ sufficiently
small. We get
\ba
\frac{\delta_1}{2} \E \Big[ \tau^\e-\rho^\e\Big|\rF_{\rho^\e}\Big]
\leq   h(U^\e_{\rho^\e})
-\E\Big[ h(U_{\tau^\e})\Big|\rF_{\rho^\e}\Big].
\ea
We also have
\ba
h(U^\e_{\rho^\e})
&=\Big(\widetilde a_+^\e-R_{\rho^\e}-
(A^\e_{\rho^\e}+M^\e_{\rho^\e})\Big)\Big( R_{\rho^\e}+a_-
^\e+(A^\e_{\rho^\e}+M^\e_{\rho^\e})\Big)\\
&\leq (\widetilde a_+^\e-R_{\rho^\e})(R_{\rho^\e}+a_-^\e) +
(A^\e_{\rho^\e}+M^\e_{\rho^\e})(\widetilde a_+^\e-2R_{\rho^\e}-a_-
^\e)\\
&=(A^\e_{\rho^\e}+M^\e_{\rho^\e})(\widetilde a_+^\e-2R_{\rho^\e}-a_-
^\e)\\
\ea
The second factor in the latter formula is of the order 
$\mathcal{O}(\e)$, and by Lemma \ref{l:AM}
we have
\ba
\E_{0,y} |h(U^\e_{\rho^\e})| &  \lesssim \e^3.
\ea
Furthermore,
\ba
\E_{0,y}| h_\e(U_{\tau^\e})|&=\E_{0,y} \Big|(\widetilde a_+^\e-
U^\e_{\tau^\e})(U^\e_{\tau^\e}+a_-^\e)\Big|\\
&= \E_{0,y} \Big|(\widetilde a_+^\e-U^\e_{\tau^\e})(U^\e_{\tau^\e}+a_-
^\e)\Big|  \bI(X^\e_{\tau^\e}=a_+^\e)\\
&\leq C\e \E_{0,y} \Big|\widetilde a_+^\e-
U^\e_{\tau^\e}\Big|  \bI(X^\e_{\tau^\e}=a_+^\e)\\
&\leq C\e^2 \E_{0,y} \Big| 1-2\e\beta(y)
-B^\e(Y^\e_{\tau^\e})\Big|  \bI(X^\e_{\tau^\e}=a_+^\e)\\
&\leq C\e^2 \E_{0,y} |r(\e,Y^\e_{\tau^\e}-
y)|\bI(X^\e_{\tau^\e}=a_+^\e)\\
&\lesssim \e^{4-\delta}.
\ea
2.
On the event $\{\tau^\e\leq \rho^\e\}$ we have analogously:
\ba
R_{\tau^\e}=U^\e_{\tau^\e} - M_{\tau^\e}^\e- A_{\tau^\e}^\e,\\
\ea
and
\ba
0&=\E\Big[ h(R_{\rho^\e})\Big|\rF_{\tau^\e}\Big]
= h(R_{\tau^\e})
+b(y)\E\Big[\int^{\rho^\e}_{\tau^\e}(-2 R_s^\e +a_+^\e-a_-^\e)\,\di
s\Big|\rF_{\tau^\e}\Big]
-  \Sigma^{00}(y)\E\Big[\int^{\rho^\e}_{\tau^\e}\,\di
s\Big|\rF_{\tau^\e}\Big]\\
\ea
Hence,
\ba
\frac{\Sigma^{00}(y)}{2} \E\Big[\rho^\e -
\tau^\e\Big|\rF_{\tau^\e}\Big]&\leq  h(R_{\tau^\e})
+(A_{\tau^\e}^\e+M_{\tau^\e}^\e)  )( U^\e_{\tau^\e}+a_-^\e-
(A_{\tau^\e}^\e+M_{\tau^\e}^\e))\\
\leq h(U_{\tau^\e}^\e) +(A_{\tau^\e}^\e+M_{\tau^\e}^\e)(
2U^\e_{\tau^\e}+a_-^\e - \widetilde a_+^\e)
\ea
and the estimates on $ h(U_{\tau^\e}^\e)$ and $A_{\tau^\e}^\e$, and
$M_{\tau^\e}^\e$ apply as in the previous step.

Combining the estimates in steps 1 and 2 yields
\eqref{eq:ineq_diff_exits}.
\end{proof}

\subsection{Accurate estimates containing $Y^\e$\label{s:Y}}

In this section we sketch the derivation of the expectations 
$\E_{0,y}(Y^{i,\e}_{\tau^\e}-y_i)(Y^{j,\e}_{\tau^\e}-y_j)$,
$\E_{0,y}(Y^{i,\e}_{\tau^\e}-y_i)X_{\tau^\e}^\e$, and
$\E_{0,y}(Y^{i,\e}_{\tau^\e}-y^i)$.

\noindent 
1. For $1\leq i,j\leq n$ we have:
\ba
\E_{0,y}(Y^{i,\e}_{\tau^\e}-y_i)(Y^{j,\e}_{\tau^\e}-y_j)
&=\E_{0,y}(V^{i,\e}_{\tau^\e}-y_i+\theta^i(Y^\e_\tau)X^\e_{\tau^\e})(V^{j,\e}_{\tau^\e}-y_j +\theta^j(Y^\e_{\tau^\e})X^\e_{\tau^\e})\\
&=\E_{0,y}(V^{i,\e}_{\tau^\e}-y_i)(V^{j,\e}_{\tau^\e}-y_j)\\
&+ \E_{0,y}(V^{i,\e}_{\tau^\e}-y_i)\theta^j(Y^\e_{\tau^\e})X^\e_{\tau^\e}\\
&+\E_{0,y} \theta^i(Y^\e_{\tau^\e})X^\e_{\tau^\e}(V^{j,\e}_{\tau^\e}-y_j)\\
&+\E_{0,y}\theta^i(Y^\e_{\tau^\e})\theta^j(Y^\e_{\tau^\e})|X^\e_{\tau^\e}|^2.
\ea
Let us apply the It\^o formula, Lemma \ref{l:tau}, and Lemma \ref{l:V}. We write only the essential terms; the $\mathcal O(\e^3)$ estimate follows from the bounds of Remark \ref{r:small}): 
\ba
\E_{0,y}  (V^{i,\e}_{\tau^\e}-y^i)(V^{j,\e}_{\tau^\e}-y^j)
&=\sum_{l=1}^m \E_{0,y}
\int_0^{\tau^\e} \Big(\sigma^i_l(X_s,Y_s) - \theta^i(Y_s)\sigma_{l}^0(X_s,Y_s)\Big)\Big(\sigma_{l}^j(X_s,Y_s) - \theta^j(Y_s)\sigma_{l}^0(X_s,Y_s)\Big)\,\di s
+\mathcal O(\e^3)\\
&=\sum_{l=1}^m \E_{0,y}
\int_0^{\tau^\e} \Big(\sigma_{l}^i(0,y) - \theta^i(y)\sigma_{l}^0(0,y)\Big)\Big(\sigma_{l}^j(0,y) - \theta^j(y)\sigma_{l}^0(0,y)\Big)\,\di s
+\mathcal O(\e^3)\\
&=\Big(  \Sigma^{ij}(y) -\theta^i(y) \Sigma^{0j}(y)   -\theta^j(y) \Sigma^{0i}(y) + \theta^i(y)\theta^j(y) \Sigma^{00}(y)   \Big)
\E_{0,y}\tau^\e +\mathcal O(\e^3).
\ea
Applying the Taylor formula to $\theta^i$, $\theta^j$ and using \eqref{e:EX2} and \eqref{e:tau} we get
\ba
\E_{0,y}\theta^i(Y^\e_{\tau^\e})\theta^j(Y^\e_{\tau^\e})|X^\e_{\tau^\e}|^2
& =\theta^i(y)\theta^j(y)\E_{0,y}|X^\e_{\tau^\e}|^2 
+ \mathcal O(\E_{0,y}|Y^\e_{\tau^\e}-y|^2 |X^\e_{\tau^\e}|^2) \\
&=\theta^i(y)\theta^j(y)d^2\e^2 + \mathcal O(\e^{3-\delta})+\mathcal O(\e^4) \\
&=\theta^i(y)\theta^j(y) \Sigma^{00}(y)  \E_{0,y}\tau^\e + \mathcal O(\e^{3-\delta}); \\
\ea
Analogously,
\ba
\E_{0,y}(V^{\e,i}_{\tau^\e}-y_i)\theta^j(Y^\e_{\tau^\e})X^\e_{\tau^\e}
&=\E_{0,y}(Y^{\e,i}_{\tau^\e}-y_i-\theta^i(Y_{\tau^\e}^\e)X_{\tau^\e}^\e)\theta^j(Y^\e_{\tau^\e})X^\e_{\tau^\e}\\
&=\theta^j(y)\E_{0,y}(Y^{i,\e}_{\tau^\e}-y_i)X^\e_{\tau^\e} - \theta^i(y)\theta^j(y) \E_{0,y} |X^\e_{\tau^\e}|^2 +\mathcal O(\e^3)\\
&=\theta^j(y)\Sigma^{0i}(y) \E_{0,y}\tau^\e - \theta^i(y)\theta^j(y) \Sigma^{00}(y)  \E_{0,y}\tau^\e
+ \mathcal O(\e^3).
\ea
Hence
 \ba
 \label{e:EY2}
\E_{0,y}(Y^{i,\e}_{\tau^\e}-y_i)(Y^{j,\e}_{\tau^\e}-y_j)=\Sigma^{ij}(y)\E_{0,y}\tau^\e +\mathcal O(\e^{3-\delta}).
\ea

\noindent 
2. Let $1\leq i\leq n$. 
Recall Lemma \ref{l:estPhiPsi}, i.e., $\Phi(u,v)=u+h_1^\e(u,v)$, $h_1^\e(u,v)=\mathcal O(\e^2)$ uniformly in $(u,v)\in (-a^\e_-,a^\e_+)\times \bR^n$. Then
\ba
\E_{0,y}(Y^{i,\e}_{\tau^\e}-y_i)X_{\tau^\e}^\e&= \E_{0,y}  (\Psi^\e(U^{i,\e}_{\tau^\e},V^{i,\e}_{\tau^\e})-y_i)\Phi^{\e}(U_{\tau^\e}^\e,V_{\tau^\e}^\e)\\
&=\E_{0,y}  (V^{i,\e}_{\tau^\e}-y^i +\theta^i(V^\e_{\tau^\e})U^\e_{\tau^\e} +h^\e_2)(U_{\tau^\e} + h_1^\e)\\
&=\E_{0,y}  (V^{i,\e}_{\tau^\e}-y^i)U_{\tau^\e}^\e
+\E_{0,y}  \theta^i(V^\e_{\tau^\e})|U^\e_{\tau^\e}|^2+ \mathcal O(\e^3)\\
&=\E_{0,y}\Big[ \sum_{l=1}^m \int_0^{\tau^\e} \Big[  \sigma_{l}^i(X_s^\e,Y_s^\e) - \theta^i(Y_s^\e)\sigma_{l}^0(X_s^\e,Y_s^\e) \Big]\,\di W^l_s\cdot
\sum_{l=1}^m \int_0^{\tau^\e}   \sigma_{l}^0(X_s^\e,Y_s^\e) \,\di W^l_s \Big]\\
&+\theta^i(y)\sum_{l=1}^m  \E_{0,y} \int_0^{\tau^\e}  \sigma_{l}^0(X_s^\e,Y_s^\e)^2\,\di s+ \mathcal O(\e^3)\\
&=\Sigma^{0i}(y) \E_{0,y}\tau^\e+ \mathcal O(\e^3).
\ea

\noindent
3. With the help of \eqref{e:Psi} we obtain
\ba
\label{e:EY}
\E_{0,y}&(Y^{i,\e}_{\tau^\e}-y^i)= \E_{0,y}  (\Psi^{i,\e}_{\tau^\e}(U^\e_{\tau^\e},V^\e_{\tau^\e})-y^i)\\
&=\E_{0,y}  (V^{i,\e}_{\tau^\e}-y^i +\theta^i(V^\e_{\tau^\e})U^\e_{\tau^\e} +h_2^\e  ) \\
&=\E_{0,y}  \Big[ V^{i,\e}_{\tau^\e}-y^i
-\theta^i(y)a_-^\e  \bI(X^\e_{\tau^\e}=-a_-^\e)
+\theta^i(y)a_+^\e(1+2\e\beta(y)) \bI(X^\e_{\tau^\e}=a_+^\e)
+ \sum_{j=1}^n \theta^i_{y^j}(y)(V^{\e,i}_{\tau^\e}-y)U_{\tau^\e}^\e\Big]\\
&+ \mathcal{O} \Big(\E_{0,y}\Big[|Y^\e_{\tau^\e}-y|^2 U_{\tau^\e}^\e +h_2^\e  \Big]\Big) \\
&=\E_{0,y} \int_0^{\tau^\e} \Big( b^i(y) -\theta^i(y) b^0(y) -\sum_{j=1}^n \theta^i_{y^j}(y)\Sigma^{0j}(y) \Big)\,\di s
+\theta^i(y)\Big(\beta(y)d +\frac{b^0(y)d^2}{\Sigma^{00}(x,y)}\Big)\e^2+\mathcal O(\e^3)\\
 &= \Big( b^i(y) + \frac{\theta^i(y)\beta(y) \Sigma^{00}(y)}{d}  \Big)\E_{0,y}\tau^\e +\mathcal O(\e^3).
\ea

\section{Proof of Theorem \ref{t:hom}\label{s:conv}}

Let $(X^\e,Y^\e)$, $\e\in(0,\e_0]$, be weak solutions of \eqref{e:XY} 
 and $(X,Y)$ be a weak solution of the limit SDE \eqref{e:XYlim}. Recall that all these processes are strong Markov.

The proof of the weak convergence $(X^\e,Y^\e) \Rightarrow (X,Y)$ in $C([0,\infty),\bR^n)$ consists, as usual,
in the proof of the weak relative compactness of the family $(X^\e,Y^\e)$ and the proof of the
convergence of finite dimensional distributions.

Denote by $S^\e(a_j^\e)=(a_{j-1}^\e,a_{j+1}^\e)$ the stripe around the $j$-th membrane.
Let 
\ba
\label{e:tau0}
\tau_0^\e&=\inf\Big\{t\geq 0\colon X^\e_t \in \{a_j^\e\}_{j\in\mathbb Z}\Big\},\\
\tau_{k+1}^\e&=\inf\{t> \tau_k^\e\colon X^\e_t\notin S^\e(X^\e_{\tau_k^\e})\},\quad k\geq 0,
\ea
be the hitting times of the neighbouring membranes.
Let $\nu^\e=(\nu^\e_t)$ be a counting process of visits to $\{a_k^\e\}$, i.e.,
\ba
\nu^\e_t=k\quad \Leftrightarrow\quad \tau_0^\e<\cdots<\tau_k^\e\leq t,\ \tau_{k+1}^\e>t.
\ea

\begin{lem}
\label{l:nu}
For any $T>0$ there is $K>0$ such that for each $\lambda\in(0,T]$ 
\ba
\lim_{\e\to 0}\sup_{x\in \{a_j^\e\},y\in\bR^n} \P_{x,y} ( \nu_\lambda^\e> K\lambda/\e^2)=0.
\ea

\end{lem}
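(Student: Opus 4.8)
The plan is to reduce the claim to a Chernoff-type lower-tail estimate for a sum of conditionally non-degenerate, strictly positive increments. Since the starting point $x$ lies on a membrane we have $\tau_0^\e=0$, and from the definition of $\nu^\e$ one gets $\{\nu_\lambda^\e\ge N\}\subseteq\{\tau_N^\e\le\lambda\}$ for every integer $N\ge1$. Writing $\xi_k:=\e^{-2}(\tau_{k+1}^\e-\tau_k^\e)$ we have $\tau_N^\e=\e^2\sum_{k=0}^{N-1}\xi_k$, so it suffices to show that $\sum_{k=0}^{N^\e-1}\xi_k$ is unlikely to be small once $N^\e\approx K\lambda\e^{-2}$ with $K$ large. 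By the strong Markov property of $(X^\e,Y^\e)$ at $\tau_k^\e$ (Theorem~\ref{t:exist}), the conditional law of $\xi_k$ given $\rF_{\tau_k^\e}$ is that of the rescaled first exit time of the restarted diffusion from the strip around the membrane at $X^\e_{\tau_k^\e}$; hence Lemma~\ref{l:tau}, applied around that membrane, yields
\ba
\E_{x,y}\big[\xi_k\,\big|\,\rF_{\tau_k^\e}\big]\ge c_1,\qquad \E_{x,y}\big[\xi_k^2\,\big|\,\rF_{\tau_k^\e}\big]\le c_2
\ea
for some $0<c_1\le c_2<\infty$ and all small $\e>0$. The one point needing care is that $c_1,c_2$ must be chosen independently of which membrane is hit and of the current tangential coordinate; this is legitimate because the expansion \eqref{e:a} of $a^\e_\pm$ is uniform over all membranes and all coefficients of \eqref{e:XY} are globally bounded, so the proof of Lemma~\ref{l:tau} goes through verbatim, with the same constants, around an arbitrary $a_j^\e$ and uniformly in $y\in\bR^n$.

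Granting these moment bounds, I would fix $s>0$ small enough and use $\ex^{-u}\le 1-u+\tfrac12u^2$ for $u\ge0$ to get the pointwise estimate
\ba
\E_{x,y}\big[\ex^{-s\xi_k}\,\big|\,\rF_{\tau_k^\e}\big]\le 1-sc_1+\tfrac12 s^2c_2=:q<1 .
\ea
Since $\sum_{k=0}^{j-1}\xi_k=\e^{-2}\tau_j^\e$ is $\rF_{\tau_j^\e}$-measurable, iterating this bound through the tower property over $k=N-1,N-2,\dots,0$ gives
\ba
\E_{x,y}\big[\ex^{-s\sum_{k=0}^{N-1}\xi_k}\big]\le q^{N},\qquad N\ge1 .
\ea
Now take $N=N^\e:=\lceil K\lambda\e^{-2}\rceil$; on $\{\tau_{N^\e}^\e\le\lambda\}$ we have $\sum_{k=0}^{N^\e-1}\xi_k=\e^{-2}\tau_{N^\e}^\e\le\lambda\e^{-2}\le N^\e/K$, so the exponential Chebyshev inequality yields
\ba
\P_{x,y}\big(\nu_\lambda^\e\ge N^\e\big)\le\P_{x,y}\big(\tau_{N^\e}^\e\le\lambda\big)\le \ex^{sN^\e/K}\,\E_{x,y}\big[\ex^{-s\sum_{k=0}^{N^\e-1}\xi_k}\big]\le\big(q\,\ex^{s/K}\big)^{N^\e}.
\ea

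Finally I would pick $K$ so large that $r:=q\,\ex^{s/K}<1$ — any $K>s/\log(1/q)$ works, and this $K$ is independent of $\lambda$. Since $\{\nu_\lambda^\e> K\lambda\e^{-2}\}\subseteq\{\nu_\lambda^\e\ge N^\e\}$ and $N^\e\ge K\lambda\e^{-2}$, we conclude
\ba
\sup_{x\in\{a_j^\e\},\,y\in\bR^n}\P_{x,y}\big(\nu_\lambda^\e> K\lambda\e^{-2}\big)\le r^{N^\e}\le r^{K\lambda\e^{-2}},
\ea
which tends to $0$ as $\e\to0$ for each fixed $\lambda\in(0,T]$, giving the assertion. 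The only genuine obstacle is the uniformity of the constants in Lemma~\ref{l:tau} over all membranes and all tangential positions noted above; the rest is the routine exponential-moment (Chernoff) bookkeeping sketched here.
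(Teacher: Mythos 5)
Your proof is correct, and it rests on exactly the same reduction as the paper's: the identity $\{\nu_\lambda^\e\ge N\}\subseteq\{\tau_N^\e\le\lambda\}$, the decomposition of $\tau_N^\e$ into the increments $\tau_{k+1}^\e-\tau_k^\e$, the strong Markov property at the hitting times, and the uniform bounds of Lemma~\ref{l:tau} (first moment bounded below by $C_1\e^2$, second moment bounded above by $C_2\e^4$, uniformly over all membranes and all tangential coordinates $y$ -- the uniformity you flag is precisely what the paper also invokes, justified as you say by the uniformity of the expansion \eqref{e:a} and the global bounds on the coefficients). The only difference is the final concentration step: the paper centers the increments to form a martingale difference and applies a second-moment (Doob/Chebyshev) bound, obtaining a probability of order $\e^2/(C_1^2K\lambda)$, which vanishes as $\e\to0$; you instead keep the increments uncentered, bound the conditional Laplace transform $\E[\ex^{-s\xi_k}\mid\rF_{\tau_k^\e}]\le 1-sc_1+\tfrac12 s^2c_2<1$ via $\ex^{-u}\le 1-u+\tfrac12u^2$, iterate through the tower property, and run a Chernoff argument. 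Your route yields the stronger, exponentially small bound $r^{K\lambda/\e^2}$ and avoids the martingale-difference bookkeeping, at the cost of choosing $s$ and $K$ so that $q\,\ex^{s/K}<1$; the paper's version is slightly more elementary and is all that is needed for the limit statement. Both choices of $K$ are independent of $\lambda$, as required.
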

\begin{proof}
Recall that according to
Lemma \ref{l:tau}
there are $C_1,C_2>0$ such that $\E_{x,y}\tau_1^\e\geq C_1\e^2$ and $\E_{x,y}|\tau_1^\e|^2\leq C_2\e^4$ uniformly for all $x\in\{a_k^\e\}$, $y\in\bR^n$ and $\e\in(0,\e_0]$ . 

Clearly, $\{\nu^\e_\lambda\geq K\lambda/\e^2\}=\{\tau_{[K\lambda/\e^2]}^\e\leq \lambda\}$.
The random sequence
\ba
\Delta^\e_k:=\tau^\e_{k}-\tau^\e_{k-1}-\E \Big[\tau^\e_{k}-\tau^\e_{k-1}\Big|\rF^\e_{\tau_{k-1}^\e}\Big]
\ea
is a martingale difference and 
\ba
\E_{x,y}|\Delta_k^\e|^2\leq 4\E_{x,y}|\tau^\e_{k}-\tau^\e_{k-1}|^2\leq 4C_2\e^4.
\ea
By the strong Markov property for any $\lambda\in[0,T]$ we have:
\ba
\P_{x,y} ( \nu_\lambda^\e\geq K\lambda/\e^2) 
&= \P_{x,y} \Big(\tau^\e_{[K\lambda/\e^2]}\leq \lambda  \Big)
\\
&= \P_{x,y} \Big(
\sum_{k=1}^{[K\lambda/\e^2]}\Delta^\e_k
\leq \lambda -  \sum_{k=1}^{[K\lambda/\e^2]} \E [\tau^\e_{k}-\tau^\e_{k-1}|\rF^\e_{\tau_{k-1}^\e}]   \Big)
\\
&\leq  \P_{x,y} \Big(
\sum_{k=1}^{[K\lambda/\e^2]}\Delta^\e_k  \leq \lambda -  C_1 [K\lambda/\e^2]\e^2 \Big)\\
&=\P_{x,y} \Big(
\frac{1}{K}\sum_{k=1}^{[K\lambda/\e^2]}\Delta^\e_k  \leq \frac{\lambda}{K} -C_1 \frac{[K\lambda/\e^2]\e^2}{K} \Big)\\
&\leq 
\P_{x,y} \Big(
\Big| \frac{1}{K}\sum_{k=1}^{[K\lambda/\e^2]}\Delta^\e_k \Big| \geq \frac{C_1\lambda}{2}\Big)\\
\ea
for $K\geq 2/C_1$. 
By Markov's inequality
\ba
\P_{x,y} \Big(\Big|\frac{1}{K}\sum_{k=1}^{[K\lambda/\e^2]}\Delta^\e_k\Big|> \frac{C_1\lambda}{2}\Big)
\leq \frac{4}{ C_1^2 K^2 \lambda^2 }\E_{x,y} \Big|\sum_{k=1}^{[K\lambda/\e^2]}\Delta^\e_k\Big|^2
\leq  \frac{4C_2\e^2}{ C_1^2K \lambda } 
\ea
which vanishes in the limit $\e \to 0$.
\end{proof}

\subsection{Weak relative compactness\label{s:wrc}}

The weak relative compactness of the family $(X^\e,Y^\e)$ in $C(\bR_+,\bR^{n+1})$ will follow from the its weak relative compactness
in the Skorokhod space $D(\bR_+,\bR^{n+1})$ guaranteed by the 
compact containment condition and Aldous' criterion 
(e.g., see conditions 3.21i and 4.4, and Theorem 4.5 in Chapter VI in Jacod and Shiryaev \cite{JacodS-03}), 
and the continuity of the paths of $(X^\e,Y^\e)$.

\begin{lem}[compact containment]\label{lem:compactCOnt}
For any $T>0$ and $\delta>0$ and there is $R=R_{\delta,T}>0$ such that
\ba
\label{e:cc}
\liminf_{\e\to 0}\P_{x,y}\Big( |X^\e_t|\leq R, |Y^\e_t|\leq R,\, t\in[0,T]\Big)\geq 1-\delta
\ea
uniformly over $x\in\bR$ and $y\in \bR^n$ belonging to compacts.
\end{lem}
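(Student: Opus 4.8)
The plan is to compare the diffusion with local times to a plain diffusion by controlling the contribution of the singular interface terms over a finite horizon. The key observation is that between consecutive hitting times $\tau_k^\e$ and $\tau_{k+1}^\e$ the process $(X^\e,Y^\e)$ is a genuine diffusion with bounded coefficients, so the only danger of escaping to infinity comes from the accumulation of the many small local-time kicks $\e\int\beta\,\di L^{a_k^\e}$. I would first bound the total local time accumulated up to time $T$: since $\e\|\beta\|_\infty\le 1$, each visit to a membrane produces a jump in the $U$-coordinate (or equivalently in $X^\e$) whose size is $\mathcal O(\e)$ by Lemma \ref{l:asympt}, and by Lemma \ref{l:nu} the number $\nu^\e_T$ of such visits is $\mathcal O_{\mathbb P}(T/\e^2)$. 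Hence the total drift contributed by the interfaces over $[0,T]$ is $\mathcal O_{\mathbb P}(1)$, i.e.\ of order $\e\cdot(T/\e^2)\cdot\e = \mathcal O(T)$, which is finite.

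More precisely, I would argue as follows. Write $(X^\e_t,Y^\e_t)$ as the sum of a martingale part $N^\e_t$ (the $\sigma\,\di W$ integrals), an ordinary drift part $D^\e_t=\int_0^t b(X^\e_s,Y^\e_s)\,\di s$, and the interface part $I^\e_t=\sum_k\e\int_0^t\beta(a_k^\e,Y^\e_s)(1,\theta(a_k^\e,Y^\e_s))\,\di L^{a_k^\e}_s(X^\e)$. Since $b$, $\sigma$ are bounded, $|D^\e_t|\le \|b\|_\infty T$ and $\mathbb E\sup_{t\le T}|N^\e_t|^2\le C_T$ by the Burkholder--Davis--Gundy inequality, uniformly in $\e$. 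For the interface part, decompose $I^\e_T$ along the excursion structure: $|I^\e_T|\le \|\beta\|_\infty(1+\|\theta\|_\infty)\sum_{k=0}^{\nu^\e_T}\big(L^{\cdot}_{\tau^\e_{k+1}}(X^\e)-L^{\cdot}_{\tau^\e_k}(X^\e)\big)$, and each increment of local time accumulated during one excursion between neighbouring membranes is, by the Tanaka formula applied to $(X^\e_{t})^+$ on $[\tau^\e_k,\tau^\e_{k+1}]$ together with the estimate $\sup_{t\le\tau^\e}|X^\e_t|\lesssim\e$ and $\mathbb E\tau^\e\lesssim\e^2$, of order $\mathcal O(\e)$ in expectation with a second moment of order $\mathcal O(\e^2)$. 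Summing the martingale differences $\Delta^\e_k:=(\text{local time increment})-\mathbb E[\cdot\,|\,\rF^\e_{\tau^\e_k}]$ as in the proof of Lemma \ref{l:nu} gives $\sum_{k\le\nu^\e_T}\Delta^\e_k=\mathcal O_{\mathbb P}(1)$, while the compensator sum is $\le C\,\e\,\nu^\e_T=\mathcal O_{\mathbb P}(1)$ by Lemma \ref{l:nu}. Therefore $\mathbb P_{x,y}(\sup_{t\le T}|I^\e_t|>R_1)\to 0$ as $R_1\to\infty$, uniformly in $\e$ small and $(x,y)$ in a compact.

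Combining the three bounds, $\sup_{t\le T}|(X^\e_t,Y^\e_t)|\le |x|+|y|+\|b\|_\infty T+\sup_{t\le T}|N^\e_t|+\sup_{t\le T}|I^\e_t|$, and choosing $R=R_{\delta,T}$ large enough that each of the martingale and interface terms exceeds $R/3-|x|-|y|-\|b\|_\infty T$ with probability at most $\delta/2$, yields \eqref{e:cc}. The uniformity over $(x,y)$ in compacts is automatic since all the estimates above depend on the starting point only through the additive term $|x|+|y|$.

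\textbf{Main obstacle.} The delicate point is making the bound on the accumulated local time $\sum_{k\le\nu^\e_T}\big(L_{\tau^\e_{k+1}}-L_{\tau^\e_k}\big)$ rigorous and uniform: one must simultaneously control the \emph{number} of excursions $\nu^\e_T$ (Lemma \ref{l:nu}) and the \emph{size} of each local-time increment, and these are not independent. The clean way is to introduce the martingale differences $\Delta^\e_k$ for the local-time increments — exactly parallel to the $\tau^\e_k$-increments in Lemma \ref{l:nu} — so that the sum splits into a compensator (controlled deterministically by $C\e\,\nu^\e_T$) and a martingale (controlled in $L^2$ by $\sum_k \mathbb E|\Delta^\e_k|^2\lesssim \nu^\e_T\e^2=\mathcal O_{\mathbb P}(1)$ via optional stopping and the uniform second-moment bound from Lemma \ref{l:tau}). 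A subtlety to handle with care is that $\nu^\e_T$ is a random index, so one should first prove the estimate on the event $\{\nu^\e_T\le K T/\e^2\}$ (whose complement is negligible by Lemma \ref{l:nu}) and then use a maximal inequality for the stopped martingale $\sum_{k\le j\wedge\nu^\e_T}\Delta^\e_k$.
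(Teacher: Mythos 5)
Your proposal is correct in outline, but it follows a genuinely different route from the paper, and the difference is most pronounced in the $Y$-component. The paper first reduces to starting points on a membrane, then controls $X^\e$ through the embedded chain: it forms martingale differences from the increments $X^\e_{\tau^\e_k}-X^\e_{\tau^\e_{k-1}}$, using the accurate one-step estimates \eqref{e:EX}--\eqref{e:EX2} (conditional means $\mathcal O(\e^2)$, second moments $\mathcal O(\e^2)$) together with Lemma \ref{l:nu} and Doob's inequality; for $Y^\e$ it does \emph{not} bound the local-time integrals directly, but performs the per-strip change of variables $V^\e_t=Y^\e_t-\theta(Y^\e_t)(X^\e_t-X^\e_{\tau^\e_k})$, writes $Y^\e$ as a bounded-coefficient It\^o process plus the telescoping sum $\sum_j\theta(Y^\e_{\tau^\e_j})(X^\e_{\tau^\e_j}-X^\e_{\tau^\e_{j-1}})$, and controls that sum by the same compensator/martingale splitting. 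You instead keep the original equations, isolate the interface part $I^\e$, and bound it by $\e$ times the total accumulated local time, estimated excursion-wise via Tanaka and Lemma \ref{l:tau}. Your route is more elementary and treats $X^\e$ and $Y^\e$ symmetrically, at the price of needing a per-excursion local-time moment bound that the paper never states explicitly; the paper's route avoids any direct local-time estimate by recycling the already proved moment asymptotics of Section \ref{s:X}, which is why it can quote \eqref{e:EX} instead of Tanaka.

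Two points in your sketch need patching, though neither is fatal. First, when you derive $\E[\ell_k\mid\rF^\e_{\tau^\e_{k-1}}]\lesssim\e$ and $\E[\ell_k^2\mid\rF^\e_{\tau^\e_{k-1}}]\lesssim\e^2$ for the local-time increment $\ell_k$ from Tanaka's formula, remember that $\di X^\e$ itself contains the term $\e\beta\,\di L$, so the local time reappears on the right-hand side; you must absorb it using $\e\|\beta\|_\infty\le 1/2$ (the symmetric-local-time convention contributes a factor $\tfrac{\e}{2}\int\beta\,\di L$) before taking expectations and invoking $\E\tau^\e\lesssim\e^2$. Second, your middle paragraph drops the prefactor $\e$ in the bound for $|I^\e_T|$ and then asserts that the compensator sum $C\e\nu^\e_T$ is $\mathcal O_{\P}(1)$; on the event $\{\nu^\e_T\le KT/\e^2\}$ it is in fact of order $T/\e$, and only after reinstating the prefactor $\e\|\beta\|_\infty(1+\|\theta\|_\infty)$ from the SDE does the interface contribution become $\mathcal O_{\P}(T)$ — which is exactly the order your first-paragraph computation $\e\cdot(T/\e^2)\cdot\e=\mathcal O(T)$ predicts, and which suffices for \eqref{e:cc}. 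With these corrections, and the reduction to membrane starting points (or a strong Markov step at $\tau^\e_0$) so that Lemma \ref{l:nu} applies, your argument goes through.
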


\begin{proof}
1. Recall that $\tau_0^\e=\inf\{t\geq 0\colon X_t\in\{a_k^\e\}\}$ is the first hitting time of a membrane. 
Since the process $(X^\e,Y^\e)$ is a regular diffusion on the interval $[0,\tau^\e_0]$ it is easy to see that
\ba
\label{e:tau00}
\sup_{(x,y)\in\bR^{n+1}}\E \tau_0^\e\lesssim \e^2,\quad 
\sup_{(x,y)\in\bR^{n+1}}\E |X_{\tau_0^\e}^\e-x|\lesssim \e,\quad 
\sup_{(x,y)\in\bR^{n+1}}\E |Y_{\tau_0^\e}^\e-y|\lesssim \e.
\ea
Hence, in order to prove \eqref{e:cc} it suffices to consider starting points $(x,y)\in \{a_k^\e\}\times \bR^n$.

\noindent
2. Let $\delta\in(0,1)$.
By Lemma \ref{l:nu}, there is $K>0$ and $\e_0(\delta)>0$ such that for all $\e\leq \e_0(\delta)$
\ba
\sup_{x\in \{a_k^\e\},y\in\bR^n} \P_{x,y}(\e^2 \nu_T^\e \geq KT)\leq \frac{\delta}{6} .
\ea
3. With abuse of notation, denote now by $(\Delta_{k}^\e)_{k\geq 1}$ the martingale difference 
\ba
\label{e:DeltaX}
\Delta_{k}^\e:=X^\e_{\tau^\e_{k}}-X^\e_{\tau^\e_{k-1}}
-\E_{X^{\e}_{\tau_{k-1}^\e},Y^{\e}_{\tau_{k-1}^\e}}(X^\e_{\tau^\e_{k}}-X^\e_{\tau^\e_{k-1}}  ),\quad  k\geq 1.
\ea
Due to the strong Markov property
and 
\eqref{e:EX} we have
\ba
\Big|\E_{X^{\e}_{\tau_{k-1}^\e},Y^{\e}_{\tau_{k-1}^\e}}(X^\e_{\tau^\e_k}-X^\e_{\tau^\e_{k-1}}  )\Big| \leq C_1\e^2 \text{ a.s.},
\ea
so that due to \eqref{e:EX2} we get
\ba
\E_{x,y}|\Delta_{k}^\e|^2\leq C_2\e^2
\ea
for some constants $C_1,C_2>0$.
By Doob's inequality for $\e>0$ small enough and $R>0$ large enough we get
\ba
\label{e:X1}
\P_{x,y}\Big(\sup_{t\in [0,T]}|X_t^\e|>R, \nu_T^\e\leq KT/ \e^2 \Big)
&\leq
\P_{x,y}\Big(\max_{k\leq  KT\e^{-2}}\Big|\sum_{j=1}^k (X^\e_{\tau^\e_k}- X^\e_{\tau^\e_{k-1}})\Big|>R - 2\|d\|_\infty\e -|x|\Big)\\
&\leq \P_{x,y}\Big(\max_{k\leq  KT\e^{-2}}\Big|\sum_{j=1}^k \Delta^\e_k \Big|>R - 2\|d\|_\infty\e -C_1KT-|x|\Big)\\
&\leq \frac{ C_2 KT }{(R - 2\|d\|_\infty\e -C_1KT -|x| )^2}
\leq \frac{\delta}{6}.
\ea
4.
To estimate $Y^\e$ we recall that in each strip 
$S^\e(a_{\tau^\e_k}^\e)=S^\e(X^\e_{\tau^\e_k})$, i.e., on each time interval $[\tau_k^\e,\tau_{k+1}^\e]$, we can perform the transformation
\ba
\label{e:mm}
V_t^\e&=Y_t^\e-\theta(Y_t^\e)(X_t^\e-X^\e_{\tau^\e_k}).
\ea
The process 
\ba
t\mapsto \theta(Y_t^\e)(X_t^\e-X^\e_{\tau^\e_k}),\quad [\tau_k^\e,\tau_{k+1}^\e],\ k\geq 0,
\ea
is uniformly bounded by $\|d\|_\infty\|\theta\|_\infty\e$. 

Note that $V_{\tau_k^\e}^\e=Y_{\tau_k^\e}^\e$, and on $t\in[\tau_k^\e,\tau_{k+1}^\e]$
the process $V^\e$ has the representation 
\ba
\label{e:mmm}
V^{\e, i}_t=Y^{\e,i}_{\tau_k^\e}
&+\int_{\tau_k^\e}^t \Big( b^i -\theta^i b^0 - \sum_{j=1}^n  \theta^i_{y^j}\Sigma^{0j} + \psi^{i}(\cdot,\cdot;X^\e_{\tau^\e_k})\Big)(X_s^\e,Y_s^\e) \,\di s\\
&+\sum_{l=1}^m \int_{\tau_k^\e}^t \Big(  \sigma_{l}^i - \theta^i \sigma_{l}^0 +\psi_{l}^{i}(\cdot,\cdot; X^\e_{\tau^\e_k})  \Big)(X_s^\e,Y_s^\e)\,\di W^l_s
\ea
with 
\ba
\psi^{i}(x,y;a)
&=-  \sum_{j=1}^n  \theta_{y^j}^i(y) (x-a)   b^j(x,y)
-\frac12 \sum_{j,k=1}^n \theta^i_{y^jy^k}(y) (x-a)\Sigma^{jk}(x,y),\\
\psi_{l}^{i}(x,y;a)&=-\sum_{j=1}^n \theta^i_{y^j}(y)  (x-a) \sigma_{l}^j(x,y),\quad i=1,\dots,n,\ l=1,\dots,m.
\ea
Thus combining \eqref{e:mm} and \eqref{e:mmm} we have on $[\tau_k^\e,\tau_{k+1}^\e]$
\ba
Y_t^\e=y+\sum_{j=1}^k \theta(Y_{\tau_j^\e}^\e) (X_{\tau_j^\e}^\e-X^\e_{\tau^\e_{j-1}})+ I^\e_t+ \theta(Y^\e_t) (X_t^\e-X^\e_{\tau^\e_k})
\ea
where the process $I^\e$ is a 
diffusion
\ba
I_t^\e=\int_0^t \Gamma(X_s^\e,Y_s^\e,\omega)\, \di s+ \int_0^t \Lambda(X_s^\e,Y_s^\e,\omega)\,\di W(s)
\ea
with uniformly bounded vector- and matrix-valued functions $\Gamma$ and $\Lambda$ that obviously satisfies
\ba
\sup_{\e\in(0,\e_0]}\P_{x,y}\Big(\sup_{t\in[0,T]}|I^\e_t+y|>R\Big)\leq \frac{\delta}{6},\quad R\to\infty.
\ea

\noindent
5. In order to prove the compact containment condition for $Y^\e$ it is sufficient to estimate the sum
$\sum_{j=1}^k \theta(Y_{\tau_j^\e}^\e)(X_{\tau_j^\e}^\e-X^\e_{\tau^\e_{j-1}})$. First, the following elementary estimates hold true:
\ba
\Big|\sum_{j=1}^k &  \theta(Y_{\tau_j^\e}^\e) (X_{\tau_j^\e}^\e-X^\e_{\tau^\e_{j-1}})\Big|\\
&\leq \Big|\sum_{j=1}^k \theta(Y_{\tau_{j-1}^\e}^\e) (X_{\tau_j^\e}^\e-X^\e_{\tau^\e_{j-1}})\Big|
+ \Big|\sum_{j=1}^k (\theta(Y_{\tau_j^\e}^\e)-\theta(Y_{\tau_{j-1}^\e}^\e)) (X_{\tau_j^\e}^\e-X^\e_{\tau^\e_{j-1}})\Big|\\
&\leq \Big|\sum_{j=1}^k \theta(Y_{\tau_{j-1}^\e}^\e)
  \Big((X_{\tau_j^\e}^\e-X^\e_{\tau^\e_{j-1}})-\E\Big[ X_{\tau_j^\e}^\e-X^\e_{\tau^\e_{j-1}}\Big|\rF_{\tau^\e_{j-1}} \Big]   \Big)\Big|\\
&+ \Big|\sum_{j=1}^k \theta(Y_{\tau_{j-1}}^\e)\E\Big[ X_{\tau_j^\e}^\e-X^\e_{\tau^\e_{j-1}}\Big|\rF_{\tau^\e_{j-1}} \Big]   
\Big|
+ 2\sum_{j=1}^k (X_{\tau_j^\e}^\e-X^\e_{\tau^\e_{j-1}})^2 +2\|D\theta\|_\infty^2\sum_{j=1}^k |Y_{\tau_j^\e}^\e-Y_{\tau_{j-1}^\e}^\e|^2
\ea
Since $(X_{\tau_j^\e}^\e-X^\e_{\tau^\e_{j-1}})^2\leq C\e^2$ for all $j\geq 1$, for the third sum we have 
\ba
\P_{x,y}\Big(\max_{k\leq KT/\e^2} \sum_{j=1}^k (X_{\tau_j^\e}^\e-X^\e_{\tau^\e_{j-1}})^2>R\Big)\leq \P_{x,y}(KTC>R)=0
\ea
for $R$ large. For the fourth summand we use the fact that  $\E_{x,y}|Y_{\tau_j^\e}^\e-Y^\e_{\tau^\e_{j-1}}|^2\leq C\e^2$ 
(see \eqref{e:EY2}) and Markov's inequality to get
\ba
\P_{x,y}\Big(\max_{k\leq KT/\e^2} \sum_{j=1}^k |Y_{\tau_j^\e}^\e-Y^\e_{\tau^\e_{j-1}}|^2>R\Big)\leq 
\frac{KTC}{R}\leq \frac{\delta}{6},\quad R\to\infty.
\ea
Due to \eqref{e:EX} we have
\ba
\Big|\sum_{j=1}^k \theta(Y_{\tau_{j-1}}^\e)\E\Big[ X_{\tau_j^\e}^\e-X^\e_{\tau^\e_{j-1}}\Big|\rF_{\tau^\e_{j-1}} \Big]\Big|
=\Big| \sum_{j=1}^k m(X_{\tau_{j-1}}^\e , Y_{\tau_{j-1}}^\e) \e^2 + \mathcal O(\e^{3-\delta}) \Big|\leq Ck\e^2 
\ea
where
\ba
m(x,y)=\theta(y)d(x)\Big(\beta(x,y)    + \frac{b^0(x,y)d(x)   }{\Sigma^{00}(x,y)} \Big)
\ea
is a bounded function, and the $\mathcal{O}$ term is uniformly bounded by a.s. Hence we get
\ba
\P_{x,y}\Big(\max_{k\leq KT/\e^2} \Big|\sum_{j=1}^k \theta(Y_{\tau_{j-1}}^\e)\E\Big[ X_{\tau_j^\e}^\e-X^\e_{\tau^\e_{j-1}}\Big|\rF_{\tau^\e_{j-1}} \Big]\Big|       >R\Big)\leq \P(KTC>R  )=0
,\quad R\to\infty.
\ea
Again, with abuse of notation, the sequence
\ba
\Delta_k^\e=\theta(Y_{\tau_{k-1}}^\e)
\Big( (X_{\tau_k^\e}^\e-X^\e_{\tau^\e_{k-1}}) -\E\Big[ X_{\tau_k^\e}^\e-X^\e_{\tau^\e_{k-1}}\Big|\rF_{\tau^\e_{k-1}} \Big]\Big)
\ea
is a martingale difference, so that by Doob's ineguality we get
\ba
\P_{x,y}\Big(\max_{k\leq KT/\e^2}\Big|\sum_{j=1}^k \Delta_k^\e\Big|>R\Big)\leq \frac{C}{R^2}\sum_{k\leq KT/\e^2}\E_{x,y} |\Delta_k^\e|^2
\leq \frac{C_1KT}{R^2}\leq \frac{\delta}{6},\quad R\to\infty.
\ea 
\end{proof}

\begin{lem}[Aldous' criterion]
For any $\eta>0$
\ba
\lim_{\lambda\to 0} \limsup_{\e\to 0}\sup_{\sigma\leq \tau\leq \sigma+\lambda}\P_{x,y}\Big(|X_\tau^\e-X^\e_\sigma|+ |Y_\tau^\e-Y^\e_\sigma|>\eta\Big)=0,
\ea
where $\sigma$ and $\tau$ are $\mathbb F^\e$-stopping times, and the limit holds uniformly over $x\in \bR$, $y\in \bR^n$.
\end{lem}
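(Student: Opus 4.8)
The plan is to prove this moving–window modulus estimate by the same mechanism as the compact containment Lemma~\ref{lem:compactCOnt}, now localised in time. First I would use the strong Markov property at the stopping time $\sigma$ to reduce the assertion to
\be
\lim_{\lambda\to0}\ \limsup_{\e\to0}\ \sup_{x\in\bR,\ y\in\bR^n}\P_{x,y}\Big(\sup_{t\le\lambda}\big(|X^\e_t-x|+|Y^\e_t-y|\big)>\eta\Big)=0,
\ee
which is stronger than the stated inequality for a single (stopping) time $\tau\in[\sigma,\sigma+\lambda]$, because $|X^\e_\tau-X^\e_\sigma|+|Y^\e_\tau-Y^\e_\sigma|$ is dominated by the supremum over $[\sigma,\sigma+\lambda]$. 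Since Lemmas~\ref{l:nu}, \ref{l:asympt} and the bounds \eqref{e:tau00} are all uniform in the initial point, the required uniformity in $(x,y)$ is then automatic.

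Next I would cut the path on $[0,\lambda]$ along the membrane hitting times $\tau_k^\e$ from \eqref{e:tau0}. The initial regular segment $[0,\tau_0^\e]$ is negligible: on it $X^\e$ stays in a strip of width $\lesssim\e$, so $\sup_{t\le\tau_0^\e}|X^\e_t-x|\lesssim\e$ a.s., and, arguing as in Lemma~\ref{l:V} together with \eqref{e:tau00}, $\E_{x,y}\sup_{t\le\tau_0^\e}|Y^\e_t-y|^2\lesssim\e^2$. On $\{\tau_0^\e\le\lambda\}$, Lemma~\ref{l:nu} applied (via the strong Markov property) from the membrane point $X^\e_{\tau_0^\e}$ produces constants $K>0$ and $\e_0(\lambda,\eta)$ such that the event $G^\e:=\{\text{at most }2K\lambda/\e^2\text{ membrane crossings occur in }[0,\lambda]\}$ has $\P_{x,y}((G^\e)^c)\le\eta$ for $\e\le\e_0(\lambda,\eta)$, uniformly in $(x,y)$. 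On $G^\e$ I would write, for $t\in(\tau_k^\e,\tau_{k+1}^\e]$ with $k\le 2K\lambda/\e^2$,
\be
X^\e_t-x=(X^\e_{\tau_0^\e}-x)+\sum_{j=1}^{k}\big(X^\e_{\tau_j^\e}-X^\e_{\tau^\e_{j-1}}\big)+\big(X^\e_t-X^\e_{\tau_k^\e}\big),
\ee
where the two boundary terms are $\lesssim\e$ a.s.\ (strip width), while the middle sum splits into its $\rF_{\tau^\e_{j-1}}$-compensator, bounded a.s.\ by $C_1\e^2 k\lesssim\lambda$ thanks to \eqref{e:EX}, plus a martingale whose increments have second moment $\lesssim\e^2$ by \eqref{e:EX2}, controlled by a Doob inequality at cost $\lesssim\lambda/\eta^2$. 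For $Y^\e$ I would reuse verbatim the decomposition \eqref{e:mm}--\eqref{e:mmm} from step~5 of the proof of Lemma~\ref{lem:compactCOnt}: $Y^\e_t-y$ becomes an increment $I^\e_t-I^\e_0$ of a diffusion with bounded coefficients (Doob: cost $\lesssim\lambda/\eta^2+\lambda$), a boundary term $\theta(Y^\e_t)(X^\e_t-X^\e_{\tau_k^\e})\lesssim\e$, and the sum $\sum_{j\le k}\theta(Y^\e_{\tau_j^\e})(X^\e_{\tau_j^\e}-X^\e_{\tau^\e_{j-1}})$, which itself decomposes into a martingale part, a compensator $\lesssim k\e^2\lesssim\lambda$ via \eqref{e:EX}, and the quadratic remainders $\sum_{j\le k}(X^\e_{\tau_j^\e}-X^\e_{\tau^\e_{j-1}})^2\lesssim k\e^2\lesssim\lambda$ a.s.\ and $\sum_{j\le k}|Y^\e_{\tau_j^\e}-Y^\e_{\tau^\e_{j-1}}|^2$ whose expectation is $\lesssim k\e^2\lesssim\lambda$ by \eqref{e:EY2}.

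Collecting the estimates I would obtain
\be
\sup_{x,y}\P_{x,y}\Big(\sup_{t\le\lambda}\big(|X^\e_t-x|+|Y^\e_t-y|\big)>\eta\Big)\ \le\ \eta+C\Big(\frac{\lambda}{\eta^2}+\frac{\lambda}{\eta}\Big)
\ee
for all $\e$ small enough (so that $\P((G^\e)^c)\le\eta$ and the deterministic $\mathcal O(\e)$ contributions stay below $\eta/4$) and $\lambda$ small enough (so that the $\mathcal O(\lambda)$ compensator and quadratic contributions stay below $\eta/4$); then $\limsup_{\e\to0}$, $\lambda\to0$ and finally $\eta\to0$ finish the proof. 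The main obstacle is the membrane contribution to $X^\e$: a window of length $\lambda$ carries $\mathcal O(\lambda/\e^2)$ crossings, each displacing $X^\e$ by $\mathcal O(\e)$, so a crude bound on the accumulated displacement would only be $\mathcal O(\lambda/\e)$, useless as $\e\to0$. What rescues the estimate is Lemma~\ref{l:asympt}: each increment has conditional mean only $\mathcal O(\e^2)$ (see \eqref{e:EX}) and variance $\mathcal O(\e^2)$ (see \eqref{e:EX2}), so after compensation and a maximal inequality the cumulative effect is $\mathcal O(\lambda)$, the same scale as a regular diffusion. A secondary technical point is to ensure that the counting estimate of Lemma~\ref{l:nu} can be invoked after the (possibly positive) wait $\tau_0^\e$ for the first membrane hit, uniformly over all starting points, including those already lying on a membrane.
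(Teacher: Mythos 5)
Your proposal is correct and follows essentially the same route as the paper: reduce via the strong Markov property (and the bound \eqref{e:tau00} for the initial segment) to starting points on a membrane, control the number of crossings in a window of length $\lambda$ by Lemma \ref{l:nu}, and then rerun the compensator/martingale estimates of Lemma \ref{lem:compactCOnt} with the fixed threshold $\eta$ in place of the large radius $R$ and the small window $\lambda$ in place of $T$. Only note that $\eta$ is fixed in the statement, so the final step is not ``$\eta\to0$'': the crossing-count term vanishes already in $\limsup_{\e\to0}$ by Lemma \ref{l:nu}, and the remaining $\mathcal{O}(\lambda/\eta^2)$ bound vanishes as $\lambda\to0$.
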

\begin{proof}
Let $\eta>0$. With the help of the strong Markov property we get
\ba
\sup_{\sigma\leq \tau\leq \sigma+\lambda}\P_{x,y}\Big(|X_\tau^\e-X^\e_\sigma|+ |Y_\tau^\e-Y^\e_\sigma|>\eta\Big)
&=\sup_{0\leq \sigma\leq \lambda}\sup_{x,y}\P_{x,y}\Big(|X^\e_\sigma-x|+ |Y^\e_\sigma-y|>\eta\Big)\\
&\leq \sup_{x,y}\P_{x,y}\Big(\sup_{t\in[0,\lambda] }|X^\e_t-x|>\eta/2, \nu_\lambda^\e\leq K\lambda/\e^2\Big)\\
&+\sup_{x,y}\P_{x,y}\Big(\sup_{t\in[0,\lambda] }|Y^\e_t-y|>\eta/2, \nu_\lambda^\e\leq  K\lambda/\e^2\Big)\\
&+\sup_{x,y}\P_{x,y}\Big(\nu_\lambda^\e> K\lambda/\e^2\Big).
\ea
We follow the argument of Lemma \ref{lem:compactCOnt} above. First we note that by \eqref{e:tau00} is suffices to consider initial points 
$x\in\{a_k^\e\}$, $y\in\bR^n$. Then, we essentially repeat the calculations of Lemma~\ref{lem:compactCOnt} 
with fixed $\eta$ instead of large $R$ and small $\lambda$ instead of fixed $T$. 
For instance, the estimate \eqref{e:X1} with $\{\Delta_k^\e\}$ defined in \eqref{e:DeltaX} takes the form
\ba
\P_{x,y}\Big(\sup_{t\in [0,\lambda]}|X_t^\e-x|>\frac{\eta}{2}, \nu_\lambda^\e\leq K\lambda/ \e^2 \Big)
&\leq
\P_{x,y}\Big(\max_{k\leq  K\lambda\e^{-2}}\Big|\sum_{j=1}^k (X^\e_{\tau^\e_k}- X^\e_{\tau^\e_{k-1}})\Big|>\frac{\eta}{2} - 2\|d\|_\infty\e\Big)\\
&\leq \P_{x,y}\Big(\max_{k\leq  K\lambda\e^{-2}}\Big|\sum_{j=1}^k \Delta^\e_k \Big|>\frac{\eta}{2} - 2\|d\|_\infty\e -C_1K\lambda\Big)\\
&\leq \frac{ C_2 K\lambda }{(\eta/2 - 2\|d\|_\infty\e -C_1K\lambda)^2}
\to 0,\quad \e\to 0, \lambda\to 0.
\ea
The estimates for the process $Y^\e$ are obtained analogously.  
\end{proof}

\subsection{Convergence of finite dimensional distributions}

We prove the convergence of finite dimensional distributions by the martingale problem method.
Let $(X,Y)$ be the diffusion with the generator $\cL$, $X_0=x\in\bR$, $Y_0=y\in\bR^n$.

Let $f\in C^3_b(\bR^{n+1},\bR)$, and let  of the process $(X,Y)$, which is defined by equation \eqref{e:XYlim}.
We have to show that for any $l\geq 1$, $h_1,\dots h_l\in C_b(\bR^{n+1},\bR)$, and any $0\leq s_1<\cdots <s_l< s< t$
\ba
\label{e:mart}
\E_{x,y}\Big[ \Big(f(X^\e_t, Y_t^\e)-f(X^\e_s, Y_s^\e)-\int_s^t Af(X^\e_u, Y_u^\e)\,\di u\Big)\prod_{j=1}^l h_j(X^\e_{s_j}, Y_{s_j}^\e)\Big]\to 0,\quad
\e\to 0.
\ea
This convergence will essentially follow from the next lemma.

\begin{lem}
Let $\eta>0$ and $f\in C^3_b(\bR^{n+1},\bR)$. Then for $\e>0$ small enough
\ba
\E_{x,y} \int_0^{\tau^\e} \Big( \cL f(X^\e_u, Y_u^\e)-\eta\Big)\,\di u
\leq
\E_{x,y} f(X_{\tau^\e}^\e,Y_{\tau^\e}^\e)-f(x,y)\leq \E_{x,y} \int_0^{\tau^\e} \Big( \cL f(X^\e_u, Y_u^\e)+\eta\Big)\,\di u
\ea
uniformly over $x\in\{a_k^\e\}$ and $y\in\bR^n$.
\end{lem}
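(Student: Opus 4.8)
The plan is to evaluate $\E_{x,y}f(X^\e_{\tau^\e},Y^\e_{\tau^\e})-f(x,y)$ with an error of order $\mathcal O(\e^{3-\delta})$ through the multivariate analogue of the second-order Taylor expansion \eqref{eq:pseudo_generator_Taylor}, and to identify its leading term as $\cL f(x,y)\,\E_{x,y}\tau^\e$, where $\cL$ is the generator of the limit SDE \eqref{e:XYlim}. Since $f\in C^3_b$, the Taylor remainder is bounded by $C\big(\E_{x,y}|X^\e_{\tau^\e}-x|^3+\E_{x,y}|Y^\e_{\tau^\e}-y|^3\big)$, which is $\mathcal O(\e^3)$: the first summand because $|X^\e_{\tau^\e}-x|\le\widehat A\e$ on $[0,\tau^\e]$, the second by Lemma~\ref{l:V} (with $2k=4$ and Jensen's inequality). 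Into the first- and second-order terms I would substitute the moment asymptotics of Lemma~\ref{l:asympt} and Section~\ref{s:Y}, each rewritten — with the help of $\Sigma^{00}(y)\,\E_{x,y}\tau^\e=d^2\e^2+\mathcal O(\e^3)$ — as the matching coefficient of \eqref{e:XYlim} times $\E_{x,y}\tau^\e$, up to $\mathcal O(\e^{3-\delta})$: from \eqref{e:EX} and \eqref{e:tau}, $\E_{x,y}(X^\e_{\tau^\e}-x)=\big(b^0+\beta\Sigma^{00}/d\big)\,\E_{x,y}\tau^\e+\mathcal O(\e^{3-\delta})$; from \eqref{e:EY}, $\E_{x,y}(Y^{\e,i}_{\tau^\e}-y^i)=\big(b^i+\theta^i\beta\Sigma^{00}/d\big)\,\E_{x,y}\tau^\e+\mathcal O(\e^3)$; from \eqref{e:EX2} and \eqref{e:tau}, $\E_{x,y}(X^\e_{\tau^\e}-x)^2=\Sigma^{00}\,\E_{x,y}\tau^\e+\mathcal O(\e^{3-\delta})$; from Section~\ref{s:Y}, $\E_{x,y}(X^\e_{\tau^\e}-x)(Y^{\e,i}_{\tau^\e}-y^i)=\Sigma^{0i}\,\E_{x,y}\tau^\e+\mathcal O(\e^3)$; and from \eqref{e:EY2}, $\E_{x,y}(Y^{\e,i}_{\tau^\e}-y^i)(Y^{\e,j}_{\tau^\e}-y^j)=\Sigma^{ij}\,\E_{x,y}\tau^\e+\mathcal O(\e^{3-\delta})$. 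Collecting the terms yields precisely
\ben
\E_{x,y}f(X^\e_{\tau^\e},Y^\e_{\tau^\e})-f(x,y)=\cL f(x,y)\,\E_{x,y}\tau^\e+\mathcal O(\e^{3-\delta}).
\een

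Next I would replace the frozen quantity $\cL f(x,y)\,\E_{x,y}\tau^\e$ by $\E_{x,y}\int_0^{\tau^\e}\cL f(X^\e_u,Y^\e_u)\,\di u$; their difference equals $\E_{x,y}\int_0^{\tau^\e}\big(\cL f(X^\e_u,Y^\e_u)-\cL f(x,y)\big)\,\di u$. Under Assumptions~\textbf{A} — in particular $\inf d>0$, so that $1/d\in C^1_b$ — and $f\in C^3_b$, the function $\cL f$ is Lipschitz with a constant independent of $\e$ and of the membrane, hence this difference is, in absolute value, at most
\ben
C\,\E_{x,y}\Big[\tau^\e\Big(\widehat A\e+\sup_{u\le\tau^\e}|Y^\e_u-y|\Big)\Big]\le C\widehat A\e\,\E_{x,y}\tau^\e+C\big(\E_{x,y}|\tau^\e|^2\big)^{1/2}\big(\E_{x,y}\sup_{u\le\tau^\e}|Y^\e_u-y|^2\big)^{1/2},
\een
which is $\mathcal O(\e^3)$ by Lemmas~\ref{l:tau} and~\ref{l:V} together with the Cauchy--Schwarz inequality. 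Therefore
\ben
\E_{x,y}f(X^\e_{\tau^\e},Y^\e_{\tau^\e})-f(x,y)=\E_{x,y}\int_0^{\tau^\e}\cL f(X^\e_u,Y^\e_u)\,\di u+E_\e(x,y),\qquad |E_\e(x,y)|\lesssim\e^{3-\delta},
\een
the implied constant being uniform in $y\in\bR^n$ and over the membranes, because every estimate used depends only on the $C^2_b$-norms of the coefficients, on $\inf d$, $\|d\|_\infty$, $\|d'\|_\infty$ and the ellipticity constant of $\Sigma$, and not on $k$.

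Finally I would convert the absolute error into a relative one: by \eqref{eq:estimate_exit_below}, $\e^2\lesssim\E_{x,y}\tau^\e\lesssim\e^2$ uniformly, so $|E_\e(x,y)|\lesssim\e^{1-\delta}\,\E_{x,y}\tau^\e\le\eta\,\E_{x,y}\tau^\e=\eta\,\E_{x,y}\int_0^{\tau^\e}\di u$ once $\e$ is small enough (depending only on $\eta$ and $\delta$), which gives both inequalities of the lemma. The main obstacle is the bookkeeping in the first step: one must verify that the five moment asymptotics recombine exactly into $\cL f$ — the drift terms producing the interface-induced corrections $\beta\Sigma^{00}/d$ and $\theta^i\beta\Sigma^{00}/d$, and the second-order terms producing $\frac12\Sigma^{00}f_{xx}+\sum_i\Sigma^{0i}f_{xy^i}+\frac12\sum_{i,j}\Sigma^{ij}f_{y^iy^j}$ — while the $\mathcal O(\e^{3-\delta})$ accuracy is preserved throughout; the passage from an absolute to a relative error then relies on the sharp two-sided control $\e^2\lesssim\E_{x,y}\tau^\e\lesssim\e^2$.
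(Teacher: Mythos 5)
Your proposal is correct and follows essentially the same route as the paper's own proof: a second-order Taylor expansion with the third-moment remainder, the moment asymptotics of Lemma~\ref{l:asympt} and Section~\ref{s:Y} to identify the leading term as $\cL f(x,y)\,\E_{x,y}\tau^\e$ up to $\mathcal O(\e^{3-\delta})$, the Lipschitz continuity of $\cL f$ together with Lemmas~\ref{l:tau} and~\ref{l:V} (via Cauchy--Schwarz) to pass from the frozen value to $\E_{x,y}\int_0^{\tau^\e}\cL f(X^\e_u,Y^\e_u)\,\di u$, and the lower bound \eqref{eq:estimate_exit_below} to absorb the error into $\eta\,\E_{x,y}\tau^\e$. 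Your write-up merely makes explicit the bookkeeping and the use of the two-sided bound on $\E_{x,y}\tau^\e$ that the paper leaves implicit.
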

\begin{proof}
 Let for definiteness $x=0$ and $y\in\bR^n$. 
The Taylor formula yields
\ba
\E_{0,y} f(X^\e_{\tau^\e},Y^\e_{\tau^\e})-f(0,y)&=f_x(0,y)\E_{0,y} X^\e_\tau+ \sum_{i=1}^n f_{y^i}(0,y)\E_{0,y} (Y^{\e,i}_{\tau^\e}-y^i)\\
&+\frac{1}{2}f_{xx}(0,y)\E_{0,y} |X^\e_{\tau^\e}|^2+ \frac{1}{2}\sum_{i=1}^n f_{x y^i}(0,y)\E_{0,y} X^\e_{\tau^\e}(Y^{\e,i}_{\tau^\e}-y^i)\\
&+\frac12  \sum_{i,j=1}^n f_{y^iy^j}(0,y)\E_{0,y}  (Y^{\e,i}_{\tau^\e}-y^i) (Y^{\e,j}_{\tau^\e}-y^j)+Q^\e
\ea
where
\ba
|Q^\e|&\lesssim \E_{0,y} |X^\e_{\tau^\e}|^3 + \E_{0,y} |Y^\e_{\tau^\e}-y|^3 + \E_{0,y} |X^\e_{\tau^\e}|^2  |Y^\e_{\tau^\e}-y|
+\E_{0,y} |X^\e_{\tau^\e}|  |Y^\e_{\tau^\e}-y|^2\\
&\lesssim \e^3\leq \frac{\eta}{3} \E_{0,y} \tau^\e.
\ea
Taking into account the asymptotics from Sections \ref{s:X} and \ref{s:Y} and estimates from Lemma \ref{l:V}  we get
\ba
\E_{0,y} f(X^\e_{\tau^\e},Y^\e_{\tau^\e})-f(0,y)&=\Big(\cL  f(0,y) + \mathcal{O}(\e^{1-\delta})\Big)\cdot \E_{0,y}\tau^\e \\
&\leq \E_{0,y}\int_0^{\tau^\e}  \Big(\cL  f(0,y) + \frac{\eta}{3}\Big)\,\di u  \\
&=\E_{0,y}\int_0^{\tau^\e}  \Big(\cL  f(X_u^\e,Y^\e_u) + \cL  f(0,y) - \cL  f(X_u^\e,Y^\e_u)+ \frac{\eta}{3}\Big)\,\di u.
\ea
Since $(x,y)\mapsto \cL f(x,y)$ is Lipschitz continuous, we have
\ba
\sup_{u\leq \tau^\e}|\cL  f(0,y) - \cL  f(X_u^\e,Y^\e_u) |\leq C\Big(\e + \sup_{u\leq \tau^\e}|Y^\e_u-y|\Big)
\ea
and thus by Lemmas \ref{l:tau} and \ref{l:V}
\ba
\Big| \E_{0,y}\int_0^{\tau^\e}  \Big(\cL  f(0,y) - \cL  f(X_u^\e,Y^\e_u)\Big)\,\di u\Big|
&\lesssim \e\E_{0,y}\tau^\e + \Big(\E_{0,y}|\tau^\e|^2\cdot \E_{0,y} \sup_{u\leq \tau^\e}|Y^\e_u-y|^2\Big)^{1/2} \\
&\lesssim \e^3 \leq \frac{\eta}{3} \E_{0,y} \tau^\e.
\ea
The estimate from below follows analogously.
\end{proof}

Now we prove \eqref{e:mart}.
For $s\geq 0$, let $\tau^\e(s)$ be the first hitting time of a membrane after $s$, i.e.,
\ba
\tau^\e(s)=\tau_{\nu_s^\e}^\e=\min\{\tau_k^\e\colon \tau_k^\e\geq s\}.
\ea
We estimate the conditional expectation:
\ba
\E\Big[  f(X^\e_t, Y_t^\e)&-f(X^\e_s, Y_s^\e)-\int_s^t \cL f(X^\e_u, Y_u^\e)\,\di u \Big|\rF_{s}^\e\Big]\\
&=\sum_{k\geq 1}
\E\Big[ \bI_{\{s<\tau_{k}^\e\leq t\}}
\Big( f(X^\e_{\tau_{k+1}^\e}, Y_{\tau_{k+1}^\e}^\e)-f(X^\e_{\tau_{k}^\e}, Y_{\tau_{k}^\e}^\e)
-\int_{\tau_{k }^\e}^{\tau_{k+1}^\e} \cL f(X^\e_u, Y_u^\e)\,\di u\Big) \Big|\rF_{s}^\e\Big] \\
&+ \E \Big[  f(X^\e_{\tau^\e(s)}, Y_{\tau^\e(s)}^\e)-f(X^\e_{s}, Y_{s}^\e)-\int_{s}^{{\tau^\e(s)}} \cL f(X^\e_u, Y_u^\e)\,\di u \Big|\rF_{s}^\e\Big]\\
&- \E \Big[  f(X^\e_{\tau^\e(t)}, Y_{\tau^\e(t)}^\e)-f(X^\e_{t}, Y_{t}^\e)-\int_{t}^{{\tau^\e(t)}} \cL f(X^\e_u, Y_u^\e)\,\di u \Big|\rF_{s}^\e\Big]\\
&\leq \sum_{k\geq 1}
\E\Big[ \bI_{\{s<\tau_{k}^\e\leq t\}} \E\Big( f(X^\e_{\tau_{k+1}^\e}, Y_{\tau_{k+1}^\e}^\e)
-f(X^\e_{\tau_{k}^\e}, Y_{\tau_{k}^\e}^\e)-\int_{\tau_{k }^\e}^{\tau_{k+1}^\e} \cL f(X^\e_u, Y_u^\e)\,\di u \Big |\rF_{\tau_{k}^\e}\Big) \Big|\rF_{s}^\e\Big] \\
&+C\sup_{x,y}\E_{x,y} \Big[ |X^\e_{\tau^\e(0)}-x|+|Y_{\tau^\e(0)}^\e-y|+ \tau^\e(0)\Big]\\
&\leq    \delta  \sum_{k\geq 1}
\E\Big[ \bI_{\{s<\tau_{k}^\e\leq t\}} (\tau_{k+1}^\e-\tau_{k}^\e)\Big|\rF_{s}^\e\Big]
+  \mathcal{O}(\e)\\
&\leq 
\delta \E \Big[ \tau^\e(t)- \tau^\e(s)\Big|\rF_s\Big]+  \mathcal{O}(\e)\\
&\leq \delta (t-s)+\delta \sup_{x,y}\E_{x,y} (\tau^\e(t)- t) + \delta \sup_{x,y}\E_{x,y}( \tau^\e(s)-s) +    \mathcal{O}(\e) = \delta (t-s)+ \mathcal{O}(\e).
\ea
Here we used \eqref{e:tau00} implies the estimates
\ba
\sup_{x,y}\E_{x,y} (\tau^\e(t)- t)=\sup_{x,y}\E_{x,y} \tau^\e(0)\lesssim \e^2
\ea
and
\ba
\sup_{x,y}\E_{x,y} \Big[ |X^\e_{\tau^\e(0)}-x|+|Y_{\tau^\e(0)}^\e-y|\Big]\lesssim \e.
\ea
A similar estimate from below holds true, too, hence multiplying this conditional expectation by the functions $h_1,\dots,h_l$ and taking expectation
yields the limit \eqref{e:mart}.

\section*{Statements and Declarations}

\noindent
\textbf{Availability of data and material.} Data sharing not applicable to this article as no datasets
were generated or analyzed during the current study.

\medskip 

\noindent
\textbf{Conflict of interests.} The authors declare that they have no conflict of interest.

\medskip 
\noindent
\textbf{Authors' contributions.} All authors have contributed equally to the paper.

\providecommand{\bysame}{\leavevmode\hbox to3em{\hrulefill}\thinspace}
\providecommand{\MR}{\relax\ifhmode\unskip\space\fi MR }
\providecommand{\MRhref}[2]{%
  \href{http://www.ams.org/mathscinet-getitem?mr=#1}{#2}
}
\providecommand{\href}[2]{#2}

%

\end{document}